\newcommand{\bburl}[1]{\textcolor{blue}{\url{#1}}}
\newcommand{\ve}[1]{\vec{\mathbf{#1}}}
\newcommand{\abs}[1]{\left| #1 \right|}
\newcommand\norm[1]{\left\lVert#1\right\rVert}
\def\<{\langle}
\def\>{\rangle}
\newcommand{\E}{\mathbb{E}}
\newcommand{\Var}{\operatorname{Var}}
\newcommand{\Prob}{\mathbb{P}}
\newcommand{\monthyear}[1]{%
  \def\@monthyear{\uppercase{#1}}}
\newcommand{\volnumber}[1]{%
  \def\@volnumber{\uppercase{#1}}}
\newcommand{\R}{{\mathbb R}}
\newcommand{\C}{{\mathbb C}}
\newcommand{\N}{{\mathbb N}}
\newcommand{\Z}{{\mathbb Z}}
\theoremstyle{plain}
\numberwithin{equation}{section} 
\newtheorem{thm}{Theorem}[section] 
\newtheorem{theorem}[thm]{Theorem}
\newtheorem{lemma}[thm]{Lemma}
\newtheorem{example}[thm]{Example}
\newtheorem{definition}[thm]{Definition}
\newtheorem{proposition}[thm]{Proposition}
\newtheorem{corollary}[thm]{Corollary}
\newtheorem{conjecture}[thm]{Conjecture}
\newtheorem{remark}[thm]{Remark}
\numberwithin{table}{section} 
\numberwithin{figure}{section}
\begin{document}

\monthyear{Month Year}
\volnumber{Volume, Number}
\setcounter{page}{1}

\title{Properties of Multidimensional Vector Zeckendorf Representations}

\author{
\name{Ivan Bortnovskyi\textsuperscript{a}, Julian Duvivier\textsuperscript{b}, Pedro Espinosa\textsuperscript{c}, Michael Lucas\textsuperscript{a}, Steven J. Miller\textsuperscript{c}, Tiancheng Pan\textsuperscript{a}, Arman Rysmakhanov\textsuperscript{c}, Iana Vranesko\textsuperscript{c}, Ren Watson\textsuperscript{d}\thanks{Ren Watson: renwatson@utexas.edu},  and Steven Zanetti\textsuperscript{e}}
\affil{\textsuperscript{a}Department of Pure Mathematics and Mathematical Statistics, University of Cambridge, Cambridge, United Kingdom, CB3 0WA; \textsuperscript{b}Department of Mathematics, Reed College, Portland, OR, 97202; \textsuperscript{c}Department of Mathematics, Williams College, Williamstown, MA, 01267; \textsuperscript{d}Department of Mathematics, University of Texas at Austin, Austin, TX 78712;
\textsuperscript{e}Department of Mathematics, University of Michigan, Ann Arbor, MI 48109} \thanks{The first listed author was supported by The Winston Churchill Foundation of the United States. The NSF grant DMS2242623 supported the second, third, seventh, eighth, ninth, and tenth listed authors, and the third, seventh, and eighth authors were supported by Williams College and the Finnerty fund. Finally, the fourth and sixth listed authors were supported by the Dr. Herchel Smith Fellowship Fund.}
}

\maketitle

{\bf Article type}: research 
\bigskip

\begin{abstract}
	Zeckendorf's Theorem says that for all $k \geq 3$, every nonnegative integer has a unique \emph{$k$-Zeckendorf representation} as a sum of distinct $k$-bonacci numbers, where no $k$ consecutive $k$-bonacci numbers are present in the representation. Anderson and Bicknell-Johnson \cite{anderson2011multidimensional} extend this result to the multidimensional context: letting the \emph{$k$-bonacci vectors} $\ve{X}_i \in \Z^{k-1}$ be given by $\ve{X}_0=\ve{0}$, $\ve{X}_{-i}=\ve{e}_i$ for $1 \leq i \leq k-1$, and $\ve{X}_n=\sum_{i=1}^k \ve{X}_{n-i}$ for all $n \in \Z$, they show that for all $k \geq 3$, every $\ve{v} \in \mathbb Z^{k-1}$ has a unique \emph{$k$-bonacci vector Zeckendorf representation}, a sum of distinct $k$-bonacci vectors where no $k$ consecutive $k$-bonacci vectors are present in the representation. Their proof provides an inductive algorithm for finding such representations. We present two improved algorithms for finding the $k$-bonacci vector Zeckendorf representation of $\ve{v}$ and analyze their relative efficiency. We utilize a projection map $S_n:\mathbb Z^{k-1} \to \mathbb Z_{\geq 0}$, introduced in \cite{anderson2011multidimensional}, that reduces the study of $k$-bonacci vector representations to the setting of $k$-bonacci number representations, provided a lower bound is established for the most negatively indexed $k$-bonacci vector present in the $k$-bonacci vector Zeckendorf representation of $\ve{v}$. Using this map and a bijection between $\mathbb Z^{k-1}$ and $\mathbb Z_{\geq 0}$, we further show that the number of and gaps between summands in $k$-bonacci vector Zeckendorf representations exhibit the same properties as those in $k$-Zeckendorf representations and that $k$-bonacci vector Zeckendorf representations exhibit summand minimality.
\end{abstract}

\begin{keywords}
	Fibonacci numbers; Zeckendorf's Theorem; Lekkerkerker's Theorem
\end{keywords}

\section{Introduction}
\subsection{History and Motivation}
A beautiful theorem of Zeckendorf \cite{originalzeckendorf} states that every nonnegative integer $n$ can be written uniquely as a sum of non-consecutive Fibonacci numbers. \footnote{ We define these by $F_1 =1, F_2 = 2$ and $F_{n+1} = F_n + F_{n-1}$; otherwise, we lose uniqueness. } We refer to this representation as the \textbf{\textit{Zeckendorf decomposition}} of $n$. Zeckendorf decompositions and their generalizations have been the subject of extensive previous study; for instance, see \cite{beckwith2013average, bower2015gaps, cordwell2018summand, kologlu2011summands, li2017collection, millerwang2012CLT, miller2014gaussian}. One natural extension of the Fibonacci sequence is as follows, where the Fibonacci numbers are given by taking $k=2$. 

\begin{definition}[$k$-bonacci Sequence]
    For a fixed choice of $k$, the \textbf{$k$-bonacci sequence} $\{x_n\}$ \footnote{ Because $k$ is fixed, we suppress it in the notation for ease of reading.} is given by $x_n=0$ for $-k+2\leq n \leq 0$, $x_1=1$, and $x_n=\sum_{i=1}^kx_{n-i}$ for all $n \in \Z$.
\end{definition}

The original proof of Zeckendorf's Theorem, via the greedy algorithm for $k=2$, naturally extends to $k \geq 3$, giving the following result. Note that the restriction $i \geq 2$ is imposed to guarantee $x_1=x_2=1$ are not both allowed in the decomposition, as this would result in loss of uniqueness of decomposition.

\begin{theorem}[Zeckendorf]\label{thm:kbonacci-zeckendorf}
    Every nonnegative integer $n$ can be written uniquely as a sum of distinct $k$-bonacci numbers $n=\sum_{i \geq 2}c_i x_i$ such that $c_i\in \{0,1\}$ for all $i$ and no $k$ consecutive $c_i$'s are equal to $1$.
\end{theorem}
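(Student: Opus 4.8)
The plan is to follow the classical strategy: obtain existence by analyzing the greedy algorithm and uniqueness through a ``maximality'' upper bound on valid representations, both by induction. At the outset I would record two elementary facts about the sequence $\{x_n\}$: that $x_2=1$, and that $\{x_n\}_{n\geq 2}$ is strictly increasing and unbounded (immediate from the recurrence), so that the greedy choice below is well defined.

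For existence I would induct on $n$, with $n=0$ the empty representation. Given $n\geq 1$, let $m\geq 2$ be the largest index with $x_m\leq n$, set $c_m=1$, and recurse on $n-x_m$. The algebraic engine is the ``second-order'' recurrence $x_{m+1}=2x_m-x_{m-k}$, obtained by subtracting consecutive instances of $x_n=\sum_{i=1}^k x_{n-i}$. From $n<x_{m+1}$ this yields $0\leq n-x_m< x_{m+1}-x_m = x_m-x_{m-k}\leq x_m$, which gives two things: first, $n-x_m<n$, so the induction proceeds; second, the representation of $n-x_m$ furnished by induction uses only indices $\leq m-1$, so appending $c_m=1$ produces distinct summands. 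For the no-$k$-consecutive condition, the only danger is a forbidden run of $k$ ones ending at index $m$, which would require $c_{m-1}=\cdots=c_{m-k+1}=1$ in the representation of $n-x_m$; but then $n-x_m\geq x_{m-1}+\cdots+x_{m-k+1}=x_{m+1}-x_m$, contradicting the strict inequality above. No run avoiding index $m$ is affected, so the extended representation is valid.

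For uniqueness the crux is the following lemma, proved by strong induction on $m$: if $c_2,\dots,c_m\in\{0,1\}$ with $c_m=1$ and no $k$ consecutive $c_i$ equal to $1$, then $\sum_{i=2}^m c_i x_i\leq x_{m+1}-1$. When $m\leq k$ the constraint is vacuous and the bound reduces to the identity $\sum_{i=2}^m 2^{i-2}=2^{m-1}-1=x_{m+1}-1$, using that $x_i=2^{i-2}$ for $2\leq i\leq k+1$. For $m\geq k+1$, the hypothesis forces some $c_j=0$ with $m-k+1\leq j\leq m-1$; taking the largest such $j$ gives $c_{j+1}=\cdots=c_m=1$, and applying the inductive bound to $c_2,\dots,c_{j-1}$ yields $\sum_{i=2}^m c_i x_i\leq (x_j+x_{j+1}+\cdots+x_m)-1\leq (x_{m-k+1}+\cdots+x_m)-1=x_{m+1}-1$, since $x_j+\cdots+x_m$ is a sub-sum of the $k$ terms whose total is $x_{m+1}$. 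Granting the lemma, uniqueness follows by induction on $n$: if $n$ has two valid representations with top indices $m\leq m'$, then $m<m'$ is impossible (the first sum is $\leq x_{m+1}-1<x_{m'}\leq n$), so $m=m'$; removing $x_m$ from both leaves two valid representations of $n-x_m<n$ (removing a summand can only break runs, never create them), which agree by induction.

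I expect the maximality lemma to be the main obstacle — not that any single step is deep, but the induction needs care in (a) isolating the degenerate range $m\leq k$ where the $k$-consecutive condition is empty and the sequence is purely geometric, (b) handling the edge case $j=2$ where the tail sum is empty, and (c) correctly locating the gap index $j$ so that it simultaneously shortens the top run of ones and keeps the remaining block of ones a sub-sum of the defining sum of $x_{m+1}$. The existence half is comparatively routine once the identity $x_{m+1}=2x_m-x_{m-k}$ is in hand.
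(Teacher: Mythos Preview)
Your proposal is correct and follows the approach the paper indicates: the paper does not actually prove this theorem but simply remarks that ``the original proof of Zeckendorf's Theorem, via the greedy algorithm for $k=2$, naturally extends to $k\geq 3$,'' and your greedy-existence plus maximality-lemma uniqueness is precisely that standard argument. One small wrinkle to iron out in the inductive step of your maximality lemma: when you ``apply the inductive bound to $c_2,\dots,c_{j-1}$'' the lemma as stated requires the top coefficient to equal $1$, but $c_{j-1}$ need not be $1$ --- the easy fix is to pass to the largest $j'\leq j-1$ with $c_{j'}=1$ (or note the tail sum vanishes if no such $j'$ exists), after which $\sum_{i\leq j-1}c_ix_i\leq x_{j'+1}-1\leq x_j-1$ as required.
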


For a fixed $k$, we refer to the unique representation of $n$ given by Theorem \ref{thm:kbonacci-zeckendorf} as the \textbf{\textit{$k$-Zeckendorf representation of $n$}}. Formally, the greedy algorithm used to find the $k$-Zeckendorf representation of a positive integer $n$ is as follows.

\begin{definition}[$k$-bonacci Number Greedy Algorithm]
    For a fixed $k > 1$ and positive integer $n$, the \textbf{\textit{$k$-bonacci number greedy algorithm}} finds the unique $k$-Zeckendorf representation of $n$ as follows.
    \begin{enumerate}
        \item Initialize $R\coloneq n$.
        \item Find $\ell_{1} \geq 2$ maximal such that $x_{\ell_{1}} \leq R$.
        \item Reset $R\coloneq R-x_{\ell_1}$.
        \item If $R=0$, the algorithm terminates. Else, repeat step (2) to find $\ell_{2} \geq 2$ maximal such that $x_{\ell_{2}} \leq R$ and reset $R\coloneq R-x_{\ell_2}$. Repeat this process until $R=0$. The finite sum $\sum_{i \geq 1} x_{\ell_i}$ forms the $k$-Zeckendorf representation of $n$.
    \end{enumerate}
\end{definition}

This paper extends the study of multidimensional Zeckendorf representations initiated by Anderson and Bicknell-Johnson \cite{anderson2011multidimensional}. Before stating our results, we introduce relevant notation and prior results.

\begin{definition}[$k$-bonacci Vectors] \label{def: k-bonacci vectors}
    The \textbf{$k$-bonacci vectors} $\ve{X}_i \in \Z^{k-1}$ are given by $\ve{X}_0=\ve{0}$, $\ve{X}_{-i}=\ve{e}_i$ for $1 \leq i \leq k-1$, and $\ve{X}_n=\sum_{i=1}^k \ve{X}_{n-i}$ for all $n \in \Z$.
\end{definition}

The notion of $k$-Zeckendorf representations for nonnegative integers extends naturally to representations of vectors $\ve{v} \in \mathbb Z^{k-1}$ as sums of $k$-bonacci vectors.

\begin{theorem}\cite[Theorem 2]{anderson2011multidimensional}\label{thm:vector-rep}
    Every $\ve{v} \in \Z^{k-1}$ has a unique representation $\ve{v}=\sum_{i \geq 1}c_i\ve{X}_{-i}$ such that $c_i\in \{0,1\}$ for all $i$ and no string of $k$ consecutive $c_i$’s is equal to $1$.
\end{theorem}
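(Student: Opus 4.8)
The plan is to follow the template of the classical proof of Theorem~\ref{thm:kbonacci-zeckendorf}: prove existence by an explicit reduction procedure and uniqueness by a descent argument, then reconcile the two through the family of finite truncation sets $\mathcal{Z}_N := \{\sum_{i=1}^{N} c_i \ve{X}_{-i} : c_i \in \{0,1\}, \text{ no } k \text{ consecutive } c_i \text{ equal } 1\}$.

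For existence, I would begin from the tautological expansion $\ve{v} = \sum_{i=1}^{k-1} v_i \ve{X}_{-i}$ afforded by $\ve{X}_{-i} = \ve{e}_i$, and then repeatedly remove the illegal features of the coefficient string $(c_i)$ using three immediate consequences of the recurrence $\ve{X}_n = \sum_{i=1}^{k}\ve{X}_{n-i}$: the collapse identity $\ve{X}_{n} + \ve{X}_{n-1} + \cdots + \ve{X}_{n-k+1} = \ve{X}_{n+1}$, which eliminates a run of $k$ consecutive $1$'s; the carry identity $2\ve{X}_n = \ve{X}_{n+1} + \ve{X}_{n-k}$, which reduces any coefficient that is at least $2$; and the borrow identity $-\ve{X}_n = \ve{X}_{n-1} + \cdots + \ve{X}_{n-k+1} - \ve{X}_{n+1}$, which moves a negative coefficient one index closer to $0$, where it is absorbed by $\ve{X}_0 = \ve{0}$. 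Applied only at indices $\le -1$, none of these moves ever creates a positive index, so the process never leaves $\{\ve{X}_{-1}, \ve{X}_{-2}, \dots\}$. The work — and what I expect to be the main obstacle — is to order the moves so that the procedure provably terminates: treat the finitely many negative coefficients one at a time, each one simply migrating upward toward index $0$ and never regenerating, while the intervening positive carries halt by the same bookkeeping as in the one-dimensional reduction to $k$-Zeckendorf form. Making that termination argument airtight (the right order, the right monovariant) is the crux of the existence half; the output is a legal $k$-bonacci vector Zeckendorf representation of $\ve{v}$.

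For uniqueness, suppose $\ve{v} = \sum_{i\ge 1} c_i \ve{X}_{-i} = \sum_{i\ge 1} c_i' \ve{X}_{-i}$ are two legal representations. Subtracting yields a finitely supported relation $\sum_{i=1}^{m} d_i \ve{X}_{-i} = \ve{0}$ with $d_i = c_i - c_i' \in \{-1,0,1\}$ and $d_m \ne 0$, where neither $\{i : d_i = 1\}$ nor $\{i : d_i = -1\}$ contains $k$ consecutive indices. Taking $d_m = 1$ without loss of generality, this says $\ve{X}_{-m} = \sum_{i=1}^{m-1}(-d_i)\ve{X}_{-i}$: the vector $\ve{X}_{-m}$ expressed as a $\{-1,0,1\}$-combination of $k$-bonacci vectors of strictly smaller index whose $+1$-support has no $k$ consecutive indices. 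I would rule this out by a size estimate — the vector analogue of the one-dimensional fact that any legal subsum of $x_2,\dots,x_{m-1}$ is strictly smaller than $x_m$. The natural tool is the projection $S_n : \Z^{k-1} \to \Z_{\ge 0}$ of~\cite{anderson2011multidimensional} (or a direct coordinate estimate serving the same purpose): once the requisite lower bound on the most negative index in play is in hand, $S_n$ sends each $\ve{X}_{-i}$ to a $k$-bonacci number and converts the vanishing relation into a forbidden one-dimensional identity, contradicting Theorem~\ref{thm:kbonacci-zeckendorf}. Pinning down enough of the coordinate geometry of the $\ve{X}_{-i}$ to run this estimate is the crux of the uniqueness half.

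Finally, the two halves knit together through the sets $\mathcal{Z}_N$. Peeling off the maximal block of $1$'s at the top of a legal length-$(N+1)$ string gives the recursion $\mathcal{Z}_{N+1} = \mathcal{Z}_N \cup \bigcup_{j=1}^{k-1}\bigl(\mathcal{Z}_{N-j} + \ve{X}_{-(N+2-j)} + \cdots + \ve{X}_{-(N+1)}\bigr)$, and the size estimate from the uniqueness argument is exactly what forces this union to be disjoint, so that $|\mathcal{Z}_N|$ equals the number of legal length-$N$ strings. Combined with the existence procedure, which shows $\bigcup_N \mathcal{Z}_N = \Z^{k-1}$, this gives the theorem: any two legal representations of a fixed $\ve{v}$ are supported in $\{\ve{X}_{-1},\dots,\ve{X}_{-N}\}$ for a common $N$, hence lie in $\mathcal{Z}_N$, where distinct legal strings give distinct vectors and so the representations coincide.
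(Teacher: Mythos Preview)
The paper does not itself prove Theorem~\ref{thm:vector-rep}; it quotes the result from \cite{anderson2011multidimensional}, whose argument is sketched in Appendix~\ref{appendix:complexity}. That proof is inductive on $\|\ve{v}\|_1$: peel off a single $\ve{X}_{-i}$ with $i\le k$ (some $\ve{e}_i$ if $\ve{v}$ has a positive coordinate, $\ve{X}_{-k}$ otherwise), recursively obtain the SR of the smaller vector, reattach the summand, and renormalize via a \emph{bounded} number of borrow--carry moves (their Lemma~4, reflected here as Lemma~\ref{lem:termbounding}, which caps the length growth at $k$). Uniqueness is handled through the projection $S_n$, exactly as you propose.

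Your uniqueness half therefore matches the source and is correct, though it is cleaner to apply $S_n$ to the two SRs directly rather than to their difference: if both have indices at most $m$, then by Lemma~\ref{lem:projection} each $S_{m+2}$-image is a legal $k$-Zeckendorf sum of $x_2,\dots,x_{m+1}$, hence already lies in $[0,x_{m+2})$ with no modular reduction, and Theorem~\ref{thm:kbonacci-zeckendorf} forces the coefficient strings to coincide. Once that is established, your $\mathcal{Z}_N$ recursion and its disjointness are consequences, not ingredients; the third step is redundant.

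Your existence half is a genuinely different route, and the gap you flag as ``the crux'' is real and is precisely what the inductive structure in \cite{anderson2011multidimensional} is engineered to avoid. Global normalization of $\sum v_i\ve{X}_{-i}$ does not terminate ``by the same bookkeeping as in the one-dimensional reduction'': each borrow at index $n$ deposits $k-1$ fresh positive units at the \emph{more negative} indices $n-1,\dots,n-k+1$, which feed further carries in both directions, so naive monovariants (support width, $\sum c_i$) need not decrease. The argument can be salvaged --- for instance, $\sum_{c_n<0}|c_n|\,|n|$ strictly drops under any borrow, so negatives are eventually absorbed at $\ve{X}_0=\ve{0}$, after which a second monovariant must handle the nonnegative phase --- but that is exactly the work you defer. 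By contrast, appending a single $\ve{X}_{-p}$ with $p\le M$ to an already legal length-$M$ representation is what makes the renormalization in \cite{anderson2011multidimensional} provably bounded, so termination there follows immediately from the induction.
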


For instance, taking $k=3$, the first several 3-bonacci vectors are given by: \begin{align*}
   \ve{X}_{-1}&~=~(1,0),\\\ve{X}_{-2}&~=~(0,1),\\\ve{X}_{-3}&~=~(-1,-1),\\\ve{X}_{-4}&~=~(2,0),\\\ve{X}_{-5}&~=~(-1,2),\\\ve{X}_{-6}&~=~(-2,-3),\\\ve{X}_{-7}&~=~(5,1).
\end{align*}Then the unique representation of $(7,0)$ satisfying the criteria of Theorem \ref{thm:vector-rep} is given by \begin{align*}(7,0)&~=~(5,1)+(2,0)+(-1,-1)+(1,0)\\&~=~\ve{X}_{-7}+\ve{X}_{-4}+\ve{X}_{-3}+\ve{X}_{-1}.\end{align*}

\begin{definition}[Satisfying Representation]
    For $\ve{v} \in \Z^{k-1}$, a representation $\ve{v}=\sum_{i \geq 1}c_i\ve{X}_{-i}$ such that all $c_i \in \{0,1\}$ and no string of $k$ consecutive $c_i$'s are equal to $1$ is called a \textbf{satisfying representation (SR)} of $\ve{v}$, and the unique such representation of $\ve{v}$ may be denoted $SR(\ve{v})$. A representation $\ve{v}=\sum_{i \geq 1}c_i\ve{X}_{-i}$ such that all $c_i \in \{0,1,2\}$, no string of $k$ consecutive $c_i$'s are nonzero, and only one string of consecutive nonzero $c_i$'s contains any $2$'s is called a \textbf{nearly satisfying representation (NSR)} of $\ve{v}$.
\end{definition}

Anderson and Bicknell-Johnson \cite{anderson2011multidimensional} introduced the following scalar product, which will allow us to utilize known results regarding the $k$-bonacci numbers in our analysis of the $k$-bonacci vectors.

\begin{definition}[$k$-bonacci Projection Map]\label{defn: s_n}
    For $n \geq k-2$, let $S_n:\Z^{k-1} \to [0,x_n)$ be the linear map $$S_n(\ve{v})~=~\ve{v} \cdot (x_{n-1},...,x_{n-(k-1)}) \pmod{x_n}.$$
\end{definition}

\begin{lemma}\cite[Lemma 3]{anderson2011multidimensional}\label{lem:projection} We have $S_n(\sum_{i=1}^pc_i\ve{X}_{-i}) \equiv\sum_{i=1}^pc_ix_{n-i} \pmod{x_n}$.
\end{lemma}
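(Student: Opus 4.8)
The plan is to reduce the identity to the single–vector case by linearity, and then prove that case by a two–sided induction that exploits the fact that $m \mapsto \ve{X}_m$ and $m \mapsto x_{n+m}$ satisfy one and the same $k$-term linear recurrence.

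\textbf{Reduction via linearity.} Although $S_n$ as written in Definition \ref{defn: s_n} takes values in $[0,x_n)$ and so is not additive into $\Z$, reduction modulo $x_n$ turns it into a $\Z$-linear map $\Z^{k-1}\to\Z/x_n\Z$: the pairing $\ve{v}\mapsto \ve{v}\cdot(x_{n-1},\dots,x_{n-(k-1)})$ is $\Z$-linear and $\Z\to\Z/x_n\Z$ is a ring homomorphism. Since $\sum_{i=1}^p c_i\ve{X}_{-i}$ is a finite $\Z$-linear combination, this gives $S_n\!\left(\sum_{i=1}^p c_i\ve{X}_{-i}\right)\equiv\sum_{i=1}^p c_i\,S_n(\ve{X}_{-i})\pmod{x_n}$, so it suffices to prove $S_n(\ve{X}_{-i})\equiv x_{n-i}\pmod{x_n}$ for every $i\ge 1$; equivalently, writing $m=-i$, I will show $S_n(\ve{X}_m)\equiv x_{n+m}\pmod{x_n}$ for all $m\in\Z$.

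\textbf{Base cases and induction.} For the $k$ consecutive indices $m\in\{-(k-1),\dots,0\}$ the identity is immediate: $\ve{X}_0=\ve{0}$ gives $S_n(\ve{X}_0)=0\equiv x_n\pmod{x_n}$, while for $1\le i\le k-1$ we have $\ve{X}_{-i}=\ve{e}_i$, and the $i$-th coordinate of $(x_{n-1},\dots,x_{n-(k-1)})$ is $x_{n-i}$, so $S_n(\ve{e}_i)=x_{n-i}=x_{n+(-i)}$ (this uses $n\ge k-2$ only so that $S_n$ is defined). Now observe that both sequences $m\mapsto \ve{X}_m$ and $m\mapsto x_{n+m}$ satisfy $f(m)=\sum_{j=1}^{k}f(m-j)$ for all $m\in\Z$: for the first this is Definition \ref{def: k-bonacci vectors}, and for the second it is the $k$-bonacci recurrence evaluated at index $n+m$. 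Because this recurrence has both leading and trailing coefficient equal to $1$, it may be run forward ($f(m)$ from $f(m-1),\dots,f(m-k)$) and backward ($f(m-k)=f(m)-\sum_{j=1}^{k-1}f(m-j)$ from $f(m),\dots,f(m-(k-1))$), and $\ve{X}_m$ and $x_{n+m}$ are genuinely defined for every $m\in\Z$ via this two-sided recurrence. Hence agreement on the length-$k$ window $\{-(k-1),\dots,0\}$ propagates, by induction upward and downward, to all $m\in\Z$; restricting to $m=-i\le -1$ yields $S_n(\ve{X}_{-i})\equiv x_{n-i}\pmod{x_n}$, and combining this with the reduction above proves the lemma.

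I expect no substantive obstacle here; the only care required is index bookkeeping — confirming that the base window has length exactly $k$, that the correct shift is $n+m$ (so a negative index $-i$ on the vector side matches $x_{n-i}$ rather than $x_{n+i}$), and that $x_m$ being defined for all $m\in\Z$ legitimizes running the induction in the negative direction. Everything else is routine.
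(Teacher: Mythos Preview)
Your proof is correct and follows essentially the same approach as the paper: reduce to the single-vector case by linearity of the dot product modulo $x_n$, verify $S_n(\ve{X}_{-i})=x_{n-i}$ directly on the initial window $0\le i\le k-1$, and then propagate by noting that $\ve{X}_{-i}$ and $x_{n-i}$ obey the same $k$-term recurrence. Your write-up is more explicit about the two-sided induction and the $i=0$ base case than the paper's terse version, but the substance is identical.
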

\begin{proof}
    For $0\leq i\leq k-1$, $$S_n(\ve{X}_{-i}) ~=~ \ve{e_i}\cdot(x_{n-1},\dots,x_{n-(k-1)}) =  x_{n-i}.$$ For $i\geq k$, we have $S_n(\ve{X}_{-i})\equiv x_{n-i} \pmod{x_n}$ by the recursive definitions of $\ve{X}_{-i}$ and $x_{n-i}$. Linearity completes the proof.
\end{proof}

\begin{lemma}\cite[Corollary 5]{anderson2011multidimensional}\label{lem:termbounding}
  Suppose $\ve{v}=\sum_{i=1}^Mc_i\ve{X}_{-i}$ and $\ve{v}'=\ve{v}+\ve{X}_{-p}=\sum_{i=1}^{M'}c'_i\ve{X}_{-i}$ are SR's and that $p \leq M$. Then $M'\leq k+M$.
\end{lemma}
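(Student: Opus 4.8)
The strategy is to push the problem through the projection map $S_n$ into the setting of $k$-bonacci \emph{numbers}, where it becomes a question about how adding a single $k$-bonacci number perturbs a $k$-Zeckendorf representation. Fix an integer $n\ge\max(M,M')+2$. By Lemma~\ref{lem:projection}, $m\coloneq S_n(\ve v)=\sum_{i=1}^M c_ix_{n-i}$ and $m'\coloneq S_n(\ve v')=\sum_{i=1}^{M'}c'_ix_{n-i}$; since every index $n-i$ appearing here lies in $[2,n-1]$ and no $k$ of the $c_i$ (resp.\ $c'_i$) are consecutive, these sums are genuine $k$-Zeckendorf representations, and in particular lie in $[0,x_n)$. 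Consequently the smallest index occurring in the $k$-Zeckendorf representation of $m$ is $n-M$ and that of $m'$ is $n-M'$, so the inequality $M'\le M+k$ is equivalent to the assertion that the smallest index in the $k$-Zeckendorf representation of $m'$ is at least $(n-M)-k$.

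From $\ve v'=\ve v+\ve X_{-p}$ and Lemma~\ref{lem:projection} we obtain $m'\equiv m+x_{n-p}\pmod{x_n}$, where $n-p\ge n-M$ since $p\le M$. If $m+x_{n-p}<x_n$ then $m'=m+x_{n-p}$. Otherwise $m+x_{n-p}=m'+x_n$, and because $m'$ has all Zeckendorf indices at most $n-1$, adjoining the term $x_n$ --- and, should this complete a block of $k$ consecutive top indices, collapsing that block upward via $x_{n-k+1}+\cdots+x_n=x_{n+1}$ --- produces the $k$-Zeckendorf representation of $m'+x_n$ without altering its smallest index $n-M'$ (we may assume $M'\ge k$, as otherwise $M'<k\le M+k$ and there is nothing to prove). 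Thus in both cases the claim reduces to the following numerical statement: \emph{if $N$ has $k$-Zeckendorf representation with smallest index $\sigma$ and $j\ge\sigma$, then the $k$-Zeckendorf representation of $N+x_j$ has smallest index at least $\sigma-k$.}

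To prove this statement, add $1$ to the coefficient of $x_j$ in the representation of $N$ and then reduce back to a $k$-Zeckendorf representation by repeatedly applying the carry moves $x_a+x_{a+1}+\cdots+x_{a+k-1}=x_{a+k}$ (collapsing $k$ consecutive ones) and $2x_a=x_{a+1}+x_{a-k}$ (splitting a coefficient~$2$). A collapse never introduces a coefficient below the block it acts on, so the only way the minimal occupied index can decrease is via the down-part $x_{a-k}$ of a split, and then only by $k$, occurring precisely when the $2$ being split sits at the current minimum. The crux is that once the minimum falls below $\sigma$ it can fall no further: below $\sigma$ the representation of $N$ is empty, and every unit deposited there arises from a down-carry, so re-triggering a split below $\sigma$ would require some position to absorb two down-carries. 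I expect this to be the main obstacle, and the clean way to rule it out is to maintain throughout the reduction the invariant that each intermediate state is a \emph{nearly satisfying representation} in the sense of the NSR definition, with the $2$ in its unique nonzero run always anchored at a position $\ge\sigma$; then the down-carries form a single chain that steps down by $k$ and halts the first time it lands below $\sigma$, hence no lower than $\sigma-k$. Feeding the numerical statement back through the two cases above yields $n-M'\ge(n-M)-k$, that is, $M'\le M+k$.
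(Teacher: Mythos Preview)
The paper does not prove this lemma; it is quoted from \cite{anderson2011multidimensional} (their Corollary~5), whose argument works directly with vector NSRs and a ``borrow--carry'' reduction (their Lemma~4, alluded to in Appendix~\ref{appendix:complexity}). So there is no in-paper proof to compare against, and I assess your argument on its own.

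Your reduction via $S_n$ to the numerical statement is correct and carefully handled: the two cases ($m+x_{n-p}$ below or above $x_n$) are treated properly, and the observation that adjoining $x_n$ to the representation of $m'$ triggers at most one top-level collapse, leaving the bottom index $n-M'$ intact once $M'\ge k$, is right. Note, however, that the numerical statement you land on --- adding one $k$-bonacci number to a $k$-Zeckendorf representation drops the minimum index by at most $k$ --- is essentially the original lemma with the index direction reversed; the translation to integers does not bypass the carry analysis, which remains the entire content.

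That carry analysis is where there is a genuine gap. You propose the invariant ``each intermediate state is an NSR with its $2$ at a position $\ge\sigma$'', but this is \emph{not} preserved by your split move. For $k=2$ take $N=x_8+x_6+x_4$ (so $\sigma=4$) and add $x_6$: splitting $2x_6=x_7+x_4$ yields $x_8+x_7+2x_4$, which carries a nonzero run $\{7,8\}$ of length $k$ and is therefore not an NSR. Each split simultaneously pushes a unit up (possibly completing a $k$-run or creating another $2$) and a unit down, and you have not shown that these effects cannot conspire to deposit two units at the same position below $\sigma$. Your own phrase ``I expect this to be the main obstacle'' concedes the point. To close the gap you need a more robust invariant --- for instance, tracking simultaneously at most one coefficient~$2$ and at most one over-long run, together with a prescribed move order that prevents either from multiplying --- along with a termination argument, or else a different proof of the numerical statement altogether. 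As written, the proposal is a sound reduction followed by an incomplete sketch of the crux.
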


For a fixed choice of $k \geq 2$ and $\ve{v} \in \mathbb Z^{k-1}$, Anderson and Bicknell-Johnson’s \cite{anderson2011multidimensional} proof of Theorem \ref{thm:vector-rep} provides a recursive approach to finding the SR of $\ve{v}$. A primary focus of this paper is to provide an improved algorithmic approach for finding the SR of $\ve{v}$. Before stating our results, we recall the following definition.

\begin{definition}[Big-$O$ Notation]
    Let $f(x)$ and $g(x)$ be real-valued functions. We say that $f(x)=O(g(x))$ if there exists some constant $c > 0$ and $x_0 \in \mathbb R$ such that $|f(x)| \leq c|g(x)|$ for all $x \geq x_0$.
\end{definition}

\subsection{Main Results}

In order to compare our algorithmic approaches to finding the SR of $\ve{v}$, we first provide in Appendix \ref{appendix:complexity} a more formal definition and complexity analysis of the algorithm given by \cite{anderson2011multidimensional}, yielding the following result.

\begin{lemma}\label{lem:old-algo-complexity}
    Let $L$ be the number of summands in the SR of $\ve{v} \in \mathbb Z^{k-1}$. The recursive algorithm for finding the SR of $\ve{v}$ in \cite{anderson2011multidimensional} runs in $O(k\cdot L\cdot \norm{\ve{v}}_1)$ steps.
\end{lemma}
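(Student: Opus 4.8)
The plan is to reconstruct the algorithm of \cite{anderson2011multidimensional} explicitly enough to count its elementary operations, then bound the work done in each stage. First I would describe the recursive structure: the algorithm builds the SR of $\ve{v}$ by processing one coordinate (or one ``unit'' of $\ve{v}$) at a time, maintaining a current partial SR and repeatedly adding a single $k$-bonacci vector $\ve{X}_{-p}$ to it, reducing to the problem of converting an SR-plus-one-term back into an SR. Each such step is exactly the situation governed by Lemma \ref{lem:termbounding}: if the current SR has $M$ summands, then after adding $\ve{X}_{-p}$ and renormalizing, the new SR has at most $k+M$ summands. So the length of the working representation grows by at most $k$ per addition, and since there are on the order of $\norm{\ve{v}}_1$ such additions, the working representation never has more than $O(k\cdot\norm{\ve{v}}_1)$ terms; but more usefully, since the \emph{final} length is $L$, I want to argue the representation length stays $O(L)$ throughout, or at worst pay the crude bound.

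The key steps, in order, would be: (1) state precisely what one ``carry'' or normalization step costs — namely, scanning the current representation of length at most $O(\text{current length})$ to find a forbidden block of $k$ consecutive ones and applying the identity $\ve{X}_{-p-1}+\cdots+\ve{X}_{-p-k} = \ve{X}_{-p}$ (wait, one must be careful with index direction, but the point is one local rewrite touches $O(k)$ entries and may propagate); (2) bound the number of such normalization rewrites triggered by a single addition of $\ve{X}_{-p}$, using Lemma \ref{lem:termbounding} to control how far the carry can cascade and how many new summands appear, giving $O(k)$ rewrites each of cost $O(\text{length})$ per added vector, hence $O(k\cdot L)$ per added vector if length is $O(L)$; (3) multiply by the number of vectors added, which is $O(\norm{\ve{v}}_1)$, since $\ve{v}$ is reached from $\ve{0}$ by adding standard basis vectors $\ve{e}_i=\ve{X}_{-i}$ a total of $\norm{\ve{v}}_1$ times (splitting positive and negative coordinates appropriately). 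Combining these gives the claimed $O(k\cdot L\cdot\norm{\ve{v}}_1)$.

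The main obstacle I anticipate is controlling the length of the intermediate working representation: Lemma \ref{lem:termbounding} only bounds the length after \emph{one} addition in terms of the length before it, so naively iterating $\norm{\ve{v}}_1$ times would let the length grow to $\Theta(k\cdot\norm{\ve{v}}_1)$, which would give a weaker bound with an extra factor. To get the stated bound with $L$ rather than $k\cdot\norm{\ve{v}}_1$ as the length parameter, I would need either (a) a monotonicity-type argument showing that once summands are fixed at highly negative indices they are never disturbed, so the ``active window'' of the representation has size $O(L)$ throughout, or (b) to simply define the cost in terms of the maximal intermediate length and observe that the algorithm as actually specified in \cite{anderson2011multidimensional} processes $\ve{v}$ coordinate-by-coordinate in a way that keeps the representation length proportional to the final answer $L$. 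I expect the cleanest route is (b): pin down the order in which \cite{anderson2011multidimensional} adds the basis vectors so that the partial sums are themselves ``close to'' $\ve{v}$, invoke Lemma \ref{lem:termbounding} to see the partial SR lengths are all $O(L)$, and then the per-step cost bookkeeping goes through. The remaining details — that each arithmetic comparison and each vector subtraction is a constant-or-$O(k)$-cost elementary step under the model used in Appendix \ref{appendix:complexity} — are routine and I would defer them to that appendix.
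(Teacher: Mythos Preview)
Your approach is essentially the same as the paper's: count $O(\norm{\ve{v}}_1)$ recursive additions of a single $k$-bonacci vector, charge $O(k\cdot L)$ for the normalization after each addition, and multiply. The paper's proof phrases the normalization in terms of the ``borrow--carry'' operation (Lemma~4 of \cite{anderson2011multidimensional}) rather than Lemma~\ref{lem:termbounding}, but the accounting is identical; on the obstacle you flag about intermediate representation length, the paper simply takes $L$ as the working length throughout without further justification --- i.e., it follows your route~(b) and does not supply the monotonicity argument of your route~(a).
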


In Section \ref{sec:findrep}, we establish two improved algorithmic approaches for finding the SR of $\ve{v}$ and provide an analysis of their relative efficiency. Both algorithms rely on the following definition.

\begin{definition}[Maximal $k$-bonacci index]
    The \textbf{\textit{maximal $k$-bonacci index}} of $\ve{v} \in \Z^{k-1} \backslash \{\ve{0}\}$ is
    $$ J(\ve{v}) ~\coloneq~  \max \{ i \in \N\ |\  \ve{X}_{-i}\  \text{appears in the SR of $\ve{v}$} \}. $$
\end{definition}

Both of our algorithms utilize the following modification of the $k$-bonacci number greedy algorithm. 

\begin{definition}[Vector Zeckendorf Greedy Algorithm]
    Let $k>1$ and $\ve{v} \in \Z^{k-1} \backslash \{ \ve{0} \}$. Suppose that $j \geq J(\ve{v})$. Then the \textbf{\textit{vector Zeckendorf greedy algorithm}} for $\ve{v}$ proceeds as follows.
    \begin{enumerate}
        \item Apply $S_{j+1}(\ve{v}) = \ve{v} \cdot (x_j, \dots, x_{j+1-(k-1)}) \pmod{x_{j+1}}$.
        \item Use the $k$-bonacci number greedy algorithm to write $S_{j+1}(\ve{v}) = \sum_{i=1}^j c_i x_{j+1-i} \pmod{x_{j+1}}$, where $c_i \in {0,1}$ and no $k$ consecutive $c_i$’s are equal to $1$.
        \item Output the representation $\sum_{i=1}^j c_i \ve{X}_{-i}$.
    \end{enumerate}
    
\end{definition}

Thus, our algorithmic approaches rely on finding efficient methods for obtaining a value of $j$. The following lemma allows us to compute a value for $j$ from any $k$-bonacci vector decomposition of $\ve{v}$.

\begin{lemma}
    \label{lem:inductive-termbounding}
    Suppose $\ve{v} = \sum_{i=1}^N \ve{X}_{-n_i}$ is any $k$-bonacci decomposition of $\ve{v}$ (not necessarily an SR). Then
    \begin{equation} \label{eq:inductive-termbounding-j}
        k(N-1) + \max_{1\leq i \leq N} n_i ~\geq~ J(\ve{v}),
    \end{equation}
    so we can use $j = k(N-1) + \max_i n_i$ in the vector Zeckendorf greedy algorithm.
\end{lemma}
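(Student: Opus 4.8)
The plan is to induct on the number of summands $N$ in the given decomposition, peeling off the summand of largest index at each stage and invoking Lemma~\ref{lem:termbounding} to control the growth of $J$. Discard the trivial case $\ve{v}=\ve{0}$ and reorder so that $n_1=\max_{1\le i\le N}n_i$. The base case $N=1$ is immediate: $\ve{v}=\ve{X}_{-n_1}$ is itself an SR, so $J(\ve{v})=n_1=k\cdot 0+n_1$.

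For the inductive step, write $\ve{w}=\ve{v}-\ve{X}_{-n_1}=\sum_{i=2}^{N}\ve{X}_{-n_i}$, a $k$-bonacci decomposition of $\ve{w}$ into $N-1$ terms, all of index at most $n_1$. If $\ve{w}=\ve{0}$ the claim is again immediate; otherwise the inductive hypothesis gives $J(\ve{w})\le k(N-2)+n_1$. If $n_1\le J(\ve{w})$, then Lemma~\ref{lem:termbounding} applied to $\ve{v}=\ve{w}+\ve{X}_{-n_1}$ (with $p=n_1\le M=J(\ve{w})$) yields $J(\ve{v})\le k+J(\ve{w})\le k(N-1)+n_1$, exactly the desired bound.

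The remaining case, $n_1>J(\ve{w})$, is the heart of the argument, since here Lemma~\ref{lem:termbounding} does not apply directly: its hypothesis $p\le M$ fails. I would split on the size of the gap. When $n_1\ge J(\ve{w})+2$, appending $\ve{X}_{-n_1}$ to $SR(\ve{w})$ gives a representation of $\ve{v}$ with coefficients in $\{0,1\}$ in which index $n_1-1$ is unused, hence no run of $k$ consecutive indices; by the uniqueness in Theorem~\ref{thm:vector-rep} this is $SR(\ve{v})$, so $J(\ve{v})=n_1\le k(N-1)+n_1$. The delicate sub-case is $n_1=J(\ve{w})+1$, where appending $\ve{X}_{-n_1}$ to $SR(\ve{w})$ can create a forbidden run, and collapsing it via the recurrence may cascade into further carries. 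To avoid chasing carries, I would peel the top term $\ve{X}_{-(n_1-1)}$ off $SR(\ve{w})$: deleting the top summand of an SR yields an SR of the difference (again by uniqueness), so, writing $\ve{u}=\ve{w}-\ve{X}_{-(n_1-1)}$, we have $SR(\ve{u})=SR(\ve{w})-\ve{X}_{-(n_1-1)}$ with maximal index at most $n_1-2$. Then $\ve{X}_{-n_1}+SR(\ve{u})$ is a genuine SR of $\ve{v}-\ve{X}_{-(n_1-1)}$ with maximal index exactly $n_1$, and now Lemma~\ref{lem:termbounding} does apply to $\ve{v}=\bigl(\ve{v}-\ve{X}_{-(n_1-1)}\bigr)+\ve{X}_{-(n_1-1)}$ with $p=n_1-1\le M=n_1$, giving $J(\ve{v})\le k+n_1\le k(N-1)+n_1$ since $N\ge 2$. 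In every branch $J(\ve{v})\le k(N-1)+\max_i n_i$, closing the induction; the final assertion follows since the vector Zeckendorf greedy algorithm only requires $j\ge J(\ve{v})$.

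I expect the main obstacle to be precisely the sub-case $n_1=J(\ve{w})+1$: one is tempted to collapse the length-$k$ run directly using an identity of the form $\sum_{j=0}^{k-1}\ve{X}_{-(a+j)}=\ve{X}_{-(a-1)}$, but this can trigger a chain of further carries whose depth is awkward to bound elementarily — exactly the bookkeeping Lemma~\ref{lem:termbounding} was designed to absorb — so the key move is to shift by a single low-index vector to make its hypothesis $p\le M$ hold. A secondary, routine point is handling degenerate partial sums (when $\ve{w}$ or $\ve{u}$ vanishes) and justifying that deleting the top term of an SR produces an SR of the smaller vector, both quick consequences of Theorem~\ref{thm:vector-rep}.
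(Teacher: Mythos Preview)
Your proposal is correct and follows essentially the same approach as the paper, which simply states that the result ``follows from inductive use of Lemma~\ref{lem:termbounding}.'' You are in fact more careful than the paper's one-line proof: you explicitly handle the edge case $n_1>J(\ve{w})$ where the hypothesis $p\le M$ of Lemma~\ref{lem:termbounding} is not immediately available, and your split into the sub-cases $n_1\ge J(\ve{w})+2$ and $n_1=J(\ve{w})+1$ (with the nice trick of peeling off the top summand of $SR(\ve{w})$ in the latter) cleanly fills that gap.
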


\begin{proof}
    The proof follows from inductive use of Lemma \ref{lem:termbounding}.
\end{proof}

The different values for $j$ arise from different ways of constructing a $k$-bonacci decomposition of $\ve{v}$. The first relies on taking a series of small steps, consisting of a combination of $\ve{X}_{-k}$ and the standard basis vectors $\{\ve{X}_{i}\}_{i=1}^{k-1}$, and obtains a larger upper bound on $j$ more quickly.

\begin{definition}[Small Steps Bound for $j$]
    Fix $\ve{v}=(v_1, v_2, \dots, v_{k-1}) \in \Z^{k-1} \setminus \{\ve{0}\}$. Proceed according to the following cases.
    \begin{enumerate}
        \item If $\ve{v}$ contains some entry $v_i<0$, then let $v_m$ be the maximally negative entry of $\ve{v}$. First, take $v_m$ steps in direction $\ve{X}_{-k}=(-1,-1,...,-1)$; then, take $v_j-v_m$ steps in direction $\ve{X}_{-j}=\ve{e}_j$ for each $j \in \{1,...,k\}$.
        \item If $\ve{v}$ contains no negative entries, then  take $v_j$ steps in direction $\ve{X}_{-j}$ for each $j \in \{1,...,k\}$.
    \end{enumerate}
\end{definition}
    This algorithm gives rise to the following bound on $j$.
\begin{theorem}
\label{thm:small-step-bound}
    Let $\ve{v}=(v_1, v_2, \dots, v_{k-1}) \in \Z^{k-1} \setminus \{\ve{0}\}$. If $v_i < 0\ \text{for some } i$, then let $v_m$ be the negative entry of $\ve{v}$ that is largest in absolute value. Let $j_{ssb}$ be defined by
    $$ j_{ssb} ~\coloneq~ \left\{
    \begin{array}{rcl}
        |v_m|k+\sum_{i=1}^{k-1}(v_i-v_m)k & \text{if} & v_i < 0\ \text{for some } i\\
        k\sum_{i=1}^{k-1}v_i -1 & \text{if} & v_i \geq 0\ \text{for all } i.
    \end{array}
    \right. $$
    Then $j_{ssb} \geq J(\ve{v})$.
\end{theorem}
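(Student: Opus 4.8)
The plan is to realize the small-steps procedure as an explicit (not necessarily satisfying) $k$-bonacci decomposition $\ve{v} = \sum_{i=1}^{N} \ve{X}_{-n_i}$, count its summands $N$, identify $\max_i n_i$, and then invoke Lemma~\ref{lem:inductive-termbounding}. The content is entirely in that lemma (which is itself an iteration of Lemma~\ref{lem:termbounding}); the work here is just bookkeeping, and the coefficients appearing in $j_{ssb}$ are arranged precisely so that $k(N-1)+\max_i n_i$ lands exactly on $j_{ssb}$ in each case.

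First I would verify that the procedure outputs a genuine decomposition of $\ve{v}$ with nonnegative step counts. If $v_i \geq 0$ for all $i$, the procedure uses $v_j \geq 0$ copies of $\ve{X}_{-j}=\ve{e}_j$ for $j=1,\dots,k-1$, and $\sum_{j=1}^{k-1} v_j \ve{e}_j = \ve{v}$. If some $v_i<0$, then since $v_m$ is the strictly negative minimal entry, each $v_j - v_m \geq 0$; using $|v_m| = -v_m$ copies of $\ve{X}_{-k}=(-1,\dots,-1)$ together with $v_j - v_m$ copies of $\ve{X}_{-j}=\ve{e}_j$, the $\ell$-th coordinate of the resulting sum is $-|v_m| + (v_\ell - v_m) = v_m + v_\ell - v_m = v_\ell$, so the sum is again $\ve{v}$. (Here I read the index range in the definition as $j \in \{1,\dots,k-1\}$, which is forced since the identity $\ve{X}_{-j}=\ve{e}_j$ only holds there.)

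Next I would read off $N$ and $\max_i n_i$ and apply Lemma~\ref{lem:inductive-termbounding}. In the all-nonnegative case, $N = \sum_{j=1}^{k-1} v_j$ and every index used lies in $\{1,\dots,k-1\}$, so $\max_i n_i \leq k-1$; the lemma gives $J(\ve{v}) \leq k(N-1) + (k-1) = kN - 1 = k\sum_{j=1}^{k-1} v_j - 1 = j_{ssb}$. In the remaining case, $N = |v_m| + \sum_{j=1}^{k-1}(v_j - v_m)$, and the indices used are $k$ together with $1,\dots,k-1$, so $\max_i n_i = k$ (note $|v_m| \geq 1$ ensures $\ve{X}_{-k}$ actually appears); the lemma gives $J(\ve{v}) \leq k(N-1) + k = kN = k\bigl(|v_m| + \sum_{j=1}^{k-1}(v_j - v_m)\bigr) = |v_m|k + \sum_{j=1}^{k-1}(v_j - v_m)k = j_{ssb}$. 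Since $\ve{v} \neq \ve{0}$ we have $N \geq 1$ in both cases, so Lemma~\ref{lem:inductive-termbounding} does apply, completing the proof.

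There is no genuine obstacle here; the only points requiring care are tracking the sign of $v_m$ (so that $-|v_m| = v_m$ and the coefficients $v_j - v_m$ are nonnegative), and using the sharper bound $\max_i n_i \leq k-1$ rather than the cruder $\leq k$ in the all-nonnegative case — otherwise one overshoots $j_{ssb}$ by exactly the ``$-1$'' term.
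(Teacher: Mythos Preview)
Your proposal is correct and follows essentially the same approach as the paper: write $\ve{v}$ explicitly as $|v_m|\ve{X}_{-k} + \sum_{i=1}^{k-1}(v_i - v_m)\ve{X}_{-i}$ (resp.\ $\sum_{i=1}^{k-1} v_i \ve{X}_{-i}$) and apply Lemma~\ref{lem:inductive-termbounding}. Your write-up is in fact slightly more careful than the paper's, e.g.\ in verifying the coefficients are nonnegative and in noting that $\max_i n_i \leq k-1$ (rather than $k$) is needed in the all-nonnegative case to recover the ``$-1$''.
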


We verify in Proposition \ref{prop:complexity-small-steps} that this approach runs in $O(k^3\norm{\ve{v}}_\infty)$ steps. We can make the bound on $j$ given by the left-hand side in \eqref{eq:inductive-termbounding-j} smaller by taking fewer steps. This motivates a second approach, in which each step is chosen to bring us as close as possible to $\ve{v}$. Formally, this is given below.

\begin{definition}[Large Steps Bound for $j$]
    \label{def:michael-algo}
    Fix $\ve{v} \in \Z^{k-1}$. Let $\ve{v}_1 = \ve{v}$, and given $\ve{v}_i$, take $n_i \in \N$ such that $\norm{\ve{v}_i - \ve{X}_{-n_i}}_2$ is minimal, and let $\ve{v}_{i+1} = \ve{v}_i - \ve{X}_{-n_i}$. Stop when $i=M+1$ and $\norm{\ve{v}_{M+1}}_2 \geq \norm{\ve{v}_M}_2$. Then, use the algorithm given in the small steps bound to write $\ve{v}_M = \sum_{i=M}^N \ve{X}_{-n_i}$ with $ n_i \in \{1, \dots, k\}$. This gives a decomposition $\ve{v} = \sum_{i=1}^N \ve{X}_{-n_i}$.\\
    \\
    We use Lemma \ref{lem:inductive-termbounding} to compute
    $$ j_{lsb} ~\coloneq~ k(N-1) + \max_i n_i ~\geq~ J(\ve{v}).$$
\end{definition}

When $k=3$, the large steps bound allows us to obtain a logarithmic bound for $j$.\\
\\
We provide Python code carrying out these algorithms in the following \href{https://colab.research.google.com/drive/1MNQNCxUA83AshL2vjkcXCuvpVs8Kw1kH?usp=sharing}{Colab file:} \\
\url{https://colab.research.google.com/drive/1MNQNCxUA83AshL2vjkcXCuvpVs8Kw1kH?usp=sharing}
.

\begin{theorem}\label{thm:large-steps-j-bound-k=3}
    Fix $k=3$. There exist $c_3, d_3>0$ such that for all $\ve{v} \in \Z^2 \backslash \{ \ve{0} \}$,
    $$ j_{lsb} ~\leq~ c_3 \log \norm{\ve{v}}_2 + d_3.$$
    Hence, we have $j_{lsb} = O(\log \norm{\ve{v}}_2) = O(\log \norm{\ve{v}}_\infty)$ by the Lipschitz equivalence of $L^p$ norms.
\end{theorem}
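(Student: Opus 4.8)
The goal is to control $j_{lsb}=k(N-1)+\max_i n_i=3(N-1)+\max_i n_i$, so it suffices to show that both the number $N$ of summands output by the large-steps procedure and the largest index $\max_i n_i$ it uses are $O(\log\norm{\ve v}_2)$. Everything hinges on the spectral shape of the $3$-bonacci vectors. Let $\lambda$ be the real root and $\mu,\bar\mu$ the complex roots of $t^3-t^2-t-1$. Reading the recursion backwards, $\ve X_{-n-3}=\ve X_{-n}-\ve X_{-n-1}-\ve X_{-n-2}$ has characteristic roots $1/\lambda,1/\mu,1/\bar\mu$, and since the product of the roots of $t^3-t^2-t-1$ equals $1$ we have $|\mu|^2=1/\lambda$, so $|1/\mu|=\sqrt\lambda>1>1/\lambda$ and the complex pair dominates: there exist $\ve\alpha\in\R^2$, $\ve\beta\in\C^2$ with $\ve X_{-n}=\lambda^{n/2}\ve u(n)+\ve\alpha\lambda^{-n}$, where $\ve u(n)=\ve a\cos(n\theta_0)+\ve b\sin(n\theta_0)$, $1/\mu=\sqrt\lambda\,e^{i\theta_0}$, $\ve a=2\,\mathrm{Re}\,\ve\beta$, $\ve b=-2\,\mathrm{Im}\,\ve\beta$. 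Thus, up to an exponentially small correction, $\ve X_{-n}$ traces a logarithmic spiral: magnitude comparable to $\lambda^{n/2}$ and argument advancing by roughly $\theta_0$ per step.

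The core of the argument is a \emph{progress lemma}: there are $\rho\in(0,1)$ and $C>0$ such that every $\ve w\in\Z^2$ with $\norm{\ve w}_2>C$ has an $n$ with $\norm{\ve X_{-n}}_2\le\norm{\ve w}_2$ and $\norm{\ve w-\ve X_{-n}}_2\le\rho\norm{\ve w}_2$. This rests on two structural facts. First, the ellipse traced by $\ve u$ is nondegenerate, i.e., $\ve a,\ve b$ are linearly independent: if not, $\ve\beta$ is a complex scalar multiple of a real vector $\ve r$, forcing $\langle\ve X_{-n},\ve r^\perp\rangle=\langle\ve\alpha,\ve r^\perp\rangle\lambda^{-n}$ for all $n$; at $n=0$, where $\ve X_0=\ve 0$, this gives $\langle\ve\alpha,\ve r^\perp\rangle=0$, so every $\ve X_{-n}$ lies on the line $\R\ve r$, contradicting $\ve X_{-1}=\ve e_1$, $\ve X_{-2}=\ve e_2$. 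Second, $\theta_0/2\pi$ is irrational: otherwise $(1/\mu)^q\in\R_{>0}$ for some $q$, so $\mu^q\in\R$ and $(\mu/\bar\mu)^q=1$, making $\lambda\mu^2=\mu/\bar\mu$ a root of unity; but a Galois conjugate of $\lambda\mu^2$, obtained by permuting $\lambda,\mu,\bar\mu$, has absolute value $\lambda^{\pm 3/2}\ne 1$, contradicting Kronecker's theorem. Together these give $\norm{\ve X_{-n}}_2\in[c'\lambda^{n/2},c''\lambda^{n/2}]$ for all $n\ge1$ (nondegeneracy for the lower bound, plus a check of the finitely many small $n$), and, since $\{n\theta_0\bmod 2\pi\}$ is dense and any window of $L$ consecutive values is a rotated copy of $\{0,\theta_0,\dots,(L-1)\theta_0\}$, for each small $\epsilon$ there is $L=L(\epsilon)$ such that any $L$ consecutive directions $\arg\ve X_{-n}$ form an $\epsilon$-net in $\R/2\pi\Z$ once $n$ is large (transporting the $\epsilon$-net through the smooth ellipse-to-angle diffeomorphism and absorbing the $O(\lambda^{-3n/2})$ angular correction). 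Now, given $\ve w$ with $\norm{\ve w}_2$ large, let $W$ be the $L$ largest indices $n$ with $c''\lambda^{n/2}\le\frac{1}{2}\norm{\ve w}_2$; these are consecutive integers with $\norm{\ve X_{-n}}_2\le\norm{\ve w}_2$ and $\norm{\ve X_{-n}}_2\ge c_0\norm{\ve w}_2$ for a fixed $c_0=c_0(\epsilon)\in(0,1)$. Choosing $n^*\in W$ with $\arg\ve X_{-n^*}$ within $\epsilon$ of $\arg\ve w$, the law of cosines gives $\norm{\ve w-\ve X_{-n^*}}_2^2\le\norm{\ve w}_2^2+t^2-2\norm{\ve w}_2 t\cos 2\epsilon$ for $t=\norm{\ve X_{-n^*}}_2\in[c_0\norm{\ve w}_2,\norm{\ve w}_2]$, and this upward parabola in $t$ is maximized at an endpoint with value $\rho^2\norm{\ve w}_2^2$, where $\rho^2=\max\{4\sin^2\epsilon,\ (1-c_0)^2+4c_0\sin^2\epsilon\}<1$ for $\epsilon$ small enough; fix such an $\epsilon$.

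Given the progress lemma, the conclusion follows quickly. Since the large-steps algorithm takes the truly minimal step, $\norm{\ve v_{i+1}}_2\le\rho\norm{\ve v_i}_2$ whenever $\norm{\ve v_i}_2>C$; iterating, $\norm{\ve v_i}_2\le\rho^{\,i-1}\norm{\ve v}_2$ until the norm first drops to $\le C$, after at most $O(\log\norm{\ve v}_2)$ steps, and thereafter $\norm{\ve v_i}_2^2\in\Z_{>0}$ strictly decreases, so at most $C^2$ further steps occur. Hence $M=O(\log\norm{\ve v}_2)$, and $\norm{\ve v_M}_2\le C$ because the stopping condition $\norm{\ve v_{M+1}}_2\ge\norm{\ve v_M}_2$ is incompatible with $\norm{\ve v_M}_2>C$ by the lemma. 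Completing $\ve v_M$ via the small-steps bound adds $O(k\norm{\ve v_M}_\infty)=O(1)$ further summands, so $N=O(\log\norm{\ve v}_2)$. For the indices, $\norm{\ve X_{-n_i}}_2\le\norm{\ve v_i}_2+\norm{\ve v_{i+1}}_2\le 2\norm{\ve v}_2$ for the large steps and $n_i\le 3$ for the small steps, so $c'\lambda^{n_i/2}\le 2\norm{\ve v}_2$ forces $\max_i n_i=O(\log\norm{\ve v}_2)$. Therefore $j_{lsb}=3(N-1)+\max_i n_i\le c_3\log\norm{\ve v}_2+d_3$ for suitable $c_3,d_3>0$ (enlarging $d_3$ to cover the finitely many $\ve v$ with $\norm{\ve v}_2\le C$), and $O(\log\norm{\ve v}_2)=O(\log\norm{\ve v}_\infty)$ by the Lipschitz equivalence of $L^p$ norms.

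The main obstacle is the progress lemma, specifically the tension between the discrete set of available magnitudes $\{\lambda^{n/2}\}$ and the need to cancel a component of $\ve w$ in an arbitrary direction: no single $\ve X_{-n}$ will in general match both the scale and the direction of a worst-case $\ve w$, so one must trade scale for direction, settling for a vector whose length is only a fixed fraction $c_0$ of $\norm{\ve w}_2$, and $c_0$ degrades as $\epsilon$ shrinks; verifying that some $\epsilon$ keeps both $4\sin^2\epsilon$ and $(1-c_0(\epsilon))^2$ strictly below $1$ is the crux. This relies on the two structural inputs, and the Galois/Kronecker argument for the irrationality of $\theta_0/2\pi$ is exactly where $k=3$ is essential: for larger $k$ the backward recursion acquires a dominant real root or a second dominant complex pair, and the clean spiral on which the whole argument is built no longer exists.
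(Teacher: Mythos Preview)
Your proof is correct and shares the paper's high-level architecture---a progress lemma (for large $\norm{\ve v}_2$, some $\ve X_{-n}$ achieves $\norm{\ve v-\ve X_{-n}}_2\le\rho\norm{\ve v}_2$ with $\rho<1$) followed by iteration to bound both $N$ and $\max_i n_i$ logarithmically---but the way you establish the progress lemma is genuinely different.

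The paper works in $\C$ via $Z_{-n}=\Phi(\ve X_{-n})$, writes $Z_{-n}=Ar^ne^{in\theta}+Br^ne^{-in\theta}+C\epsilon^n$ with explicit numerical values of $A,B,C,r,\theta,\epsilon$, and verifies by direct interval arithmetic that any $9$ consecutive $Z_{-n}$ with $n\ge 9$ meet every wedge of semiangle $\pi/3$. From this it extracts an explicit region guaranteed to contain a $3$-bonacci vector, bounds the distance to the farthest point of that region, and obtains concrete constants ($\lambda\approx 0.9998$, $R\approx 6.8\times 10^5$, and ultimately $c_3\approx 15570$, $d_3\approx 5.018$). Crucially, no irrationality of $\theta/2\pi$ is invoked: coverage of the wedges is checked for that specific $\theta$ by computation, so even a rational $\theta/2\pi$ with large denominator would not break the argument.

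You instead prove that $\theta_0/2\pi$ is irrational via a Galois/Kronecker argument (if $\mu/\bar\mu=\lambda\mu^2$ were a root of unity, a Galois conjugate of absolute value $\lambda^{\pm 3/2}\ne 1$ would contradict Kronecker), deduce that any $L(\epsilon)$ consecutive directions form an $\epsilon$-net, and then optimise $\epsilon$ in a law-of-cosines estimate. Your nondegeneracy argument for the ellipse $\ve u(n)$ and the passage through the ellipse-to-argument diffeomorphism are handled correctly, and the endgame (bounding $N$, $\max_i n_i$, and handling the small-steps tail) matches the paper's.

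What each approach buys: yours is cleaner, avoids numerics, and explains \emph{why} the phenomenon holds (a single dominant complex pair in the backward recursion, which is exactly what makes $k=3$ special); but it is ineffective, yielding no explicit $c_3,d_3$. The paper's approach is computational and requires no algebraic number theory, and it produces explicit---if impractically large---constants.
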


We conjecture based on strong computational evidence that this result may be extended to all $k \geq 3$.

\begin{conjecture} \label{conj:large-steps-j-bound}
    Fix $k \geq 3$. There exist $c_3, d_3>0$ such that for all $\ve{v} \in \Z^2 \backslash \{ \ve{0} \}$,
    $$ j_{lsb} ~\leq~ c_3 \log \norm{\ve{v}}_2 + d_3.$$
    Hence, we have $j_{lsb} = O(\log \norm{\ve{v}}_2) = O(\log \norm{\ve{v}}_\infty)$ by the Lipschitz equivalence of $L^p$ norms.
\end{conjecture}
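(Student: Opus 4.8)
The plan is to follow the architecture of the $k=3$ case (Theorem~\ref{thm:large-steps-j-bound-k=3}) but replace its planar geometry by the spectral picture of the backward $k$-bonacci recurrence; throughout we read $\Z^{2}$ in the statement as $\Z^{k-1}$ and $c_3,d_3$ as constants depending on $k$. In the large steps bound we have $\ve v=\sum_{i=1}^N\ve X_{-n_i}$, where the first $M$ indices are produced by the greedy closest-vector steps $\ve v_{i+1}=\ve v_i-\ve X_{-n_i}$ and the last $N-M$ come from a small steps decomposition of $\ve v_M$, and $j_{lsb}=k(N-1)+\max_i n_i$. Since $k$ is fixed, the conjecture reduces to the two bounds (i) $N=O(\log\norm{\ve v}_2)$ and (ii) $\max_i n_i=O(\log\norm{\ve v}_2)$; the Lipschitz equivalence of $L^p$ norms then upgrades $\norm{\cdot}_2$ to $\norm{\cdot}_\infty$.

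Bound (ii) is the routine half. The residual norms through step $M$ are strictly decreasing (the greedy rule continues precisely until no $k$-bonacci vector decreases the norm), so each large step satisfies $\norm{\ve X_{-n_i}}_2=\norm{\ve v_i-\ve v_{i+1}}_2\le 2\norm{\ve v_i}_2\le 2\norm{\ve v}_2$, while the small step indices are all at most $k$. The $k$-bonacci constant $\lambda_k$ (the root of $x^k-\sum_{i=0}^{k-1}x^i$ in $(1,2)$) is Pisot, so every other characteristic root lies strictly inside the unit disk; hence the backward recurrence has spectral radius $\rho:=1/\min\{|\lambda|:\lambda\text{ a characteristic root}\}>1$, and one expects $\norm{\ve X_{-n}}_2\ge c\,\rho^{n}$ for some $c=c(k)>0$ and all large $n$ (this requires ruling out degenerate cancellation among the dominant terms of the eigenexpansion of $\ve X_{-n}$). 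Given this, $\rho^{n_i}=O(\norm{\ve v}_2)$ for every large step, so $\max_i n_i=O(\log\norm{\ve v}_2)$.

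Bound (i) rests on a \emph{uniform geometric contraction}: there should exist $\theta=\theta(k)<1$ and a threshold $T=T(k)$ with the property that $\norm{\ve v_i}_2>T$ implies $\norm{\ve v_{i+1}}_2\le\theta\norm{\ve v_i}_2$. This at once gives $M=\log_{1/\theta}\norm{\ve v}_2+O(1)$, forces $\norm{\ve v_M}_2\le T=O(1)$, and hence makes the small steps decomposition of $\ve v_M$ contribute only $N-M=O(1)$ summands; together these yield (i). The contraction follows in turn from an \emph{approximation property}: for every unit vector $\ve w\in\R^{k-1}$ and every sufficiently large $r$ there is an index $n$ with $\norm{\ve X_{-n}}_2\in[r,Cr]$ and $\angle(\ve X_{-n},\ve w)\le\alpha$, where $C<\infty$ and $\alpha<\pi/2$ depend only on $k$; the law of cosines then gives $\norm{\ve v_i-\ve X_{-n}}_2\le\theta\norm{\ve v_i}_2$ with $\theta=\theta(C,\alpha)<1$. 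For $k=3$ the approximation property holds because the dominant backward eigenvalue is a complex-conjugate pair $\beta,\bar\beta$ with $\arg\beta$ an irrational multiple of $\pi$: the directions $\ve X_{-n}/\norm{\ve X_{-n}}_2$ are then dense in $S^1$ with controlled spacing, while consecutive magnitudes differ by a bounded ratio $\approx\rho$.

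The main obstacle — and why this is still only a conjecture for $k\ge4$ — is that for $k\ge4$ the dominant backward eigenspace $V_1$ is a \emph{proper} subspace of $\R^{k-1}$ (generically a line or a plane, from a single real root or a single conjugate pair of largest modulus): the large-norm $k$-bonacci vectors point nearly along $V_1$, so directions transverse to $V_1$ cannot be matched at scale $\rho^{n}$ and the naive approximation property fails. The natural fix is a multi-scale peeling argument: order the distinct backward-modulus levels $\rho=\rho_1>\rho_2>\cdots>\rho_s>1$ with real generalized eigenspaces $V_1,\dots,V_s$ spanning $\R^{k-1}$, use greedy steps (approximating $V_1$-directions as in the $k=3$ case) to drive the $V_1$-component of the residual down to the scale of the next level, then recurse on $V_2\oplus\cdots\oplus V_s$, and so on. Carrying this out rigorously requires (a) checking, uniformly in $k$, that the dominant eigenvalue at each level is either a conjugate pair with argument irrational over $\pi$ or — if it is a lone real eigenvalue — can be handled by supplementing the peeling with auxiliary steps that clear the corresponding one-dimensional coordinate; and (b) showing the peeling still terminates in $O(\log\norm{\ve v}_2)$ total steps rather than $O(\log^{s}\norm{\ve v}_2)$, which should follow from the fact that subtracting an $\ve X_{-n}$ nearly parallel to $V_1$ perturbs the lower-level components only through its sub-leading part, of size $O(\rho_2^{\,n})$, so that with careful book-keeping these perturbations telescope instead of accumulating. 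Establishing (a) and (b) are the two hurdles between this proposal and a proof.
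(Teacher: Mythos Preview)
The statement you are attempting is labeled a \emph{conjecture} in the paper, and the paper does \emph{not} supply a proof; it only proves the $k=3$ instance (Theorem~\ref{thm:large-steps-j-bound-k=3}) and then says explicitly, both after the conjecture and in the Future Work section, that the general case ``remains conjectural'' and is left as an open problem. So there is no paper proof to compare your proposal against.

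Your proposal is a reasonable strategic outline, and you are commendably honest that it is not a proof: you yourself identify the two unresolved hurdles (a) and (b) at the end. Those gaps are genuine and are precisely why the paper leaves the statement open. In particular, your ``uniform geometric contraction'' step fails as stated once $k\ge 4$, because (as you note) the dominant backward eigenspace $V_1$ is a proper subspace of $\R^{k-1}$, so large $k$-bonacci vectors cluster near $V_1$ and cannot approximate directions transverse to it at the right scale; the multi-scale peeling you propose is plausible but is not carried out. There is also a smaller gap in bound~(ii): the lower bound $\norm{\ve X_{-n}}_2\ge c\rho^n$ requires showing that the projection of the initial data onto the dominant backward eigenspace is nonzero (your parenthetical ``ruling out degenerate cancellation''), which you assert but do not verify. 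In short, what you have written is a well-structured research proposal toward the conjecture, not a proof of it, and the paper itself claims nothing more.
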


In Section \ref{sec:gaps}, we turn our attention to the number of and gaps between summands in $k$-bonacci vector Zeckendorf representations. We show that they exhibit the same properties as those in the one-dimensional case. 
\begin{definition}\label{defn: d_n}
For $n\ge 0$, let $D_n \ \coloneq\  \left\{ \vec{\mathbf{v}}\in\mathbb Z^{k-1}: J(\ve{v})\le n  \right\}$.
\end{definition}
\begin{definition}\label{def:assocprobspace}

For $n>0$, consider the discrete outcome space $D_{n} \setminus D_{n-1}$ with probability measure $$ \mathbb{P}_n(A)  \ = \  \frac{|A|}{x_{n+2}-x_{n+1}}, \quad  A \subset D_{n}\setminus D_{n-1}. $$ Note $|D_{n} \setminus D_{n-1}| = x_{n+2} - x_{n+1}$, so this is the uniform measure. We define the random variable $K_n$ by setting $K_n(\ve{v})$ equal to the number of summands of $\ve{v} \in D_{n} \setminus D_{n-1}$ in its SR.
\end{definition}

\begin{definition}
Let $\ve{v}\in D_{n}\setminus D_{n-1}$, and let $\kappa(\ve{v})$ denote the number of summands in the SR of $\ve{v}$. Let $\{r_j\}_{j=1}^{\kappa(\ve{v})}$ be the set of indices present in the SR of $\ve{v}$.
\begin{itemize}
\item \emph{Spacing gap measure:} We define the spacing gap measure of $\ve{v}$ by  \begin{equation} \nu_{\ve{v};n}(x)  \ := \  \frac{1}{\kappa(\ve{v})-1} \sum_{j=2}^{\kappa(\ve{v})} \delta\left(x - (r_j - r_{j-1})\right), \end{equation} where $\delta$ is the Dirac delta functional.\footnote{Thus for any continuous function $f$ we have $\int_{-\infty}^\infty f(x) \delta(x-a)dx = f(a)$; we may view $\delta(x-a)$ as representing a unit point mass concentrated at $a$.} We do not include the gap to the first summand, as this is not a gap \emph{between} summands; for almost all $\ve{v}$, one extra gap is negligible in the limit.

\item \emph{Average spacing gap measure:} The SR of $\ve{v}$ has $\kappa(\ve{v})-1$ gaps. Thus the total number of gaps for all $\ve{v}\in D_{n}\setminus D_{n-1}$ is 
\begin{equation} N_{{\rm gaps}}(n) \ := \ \sum_{\ve{v}\in D_{n}\setminus D_{n-1}} \left(\kappa(\ve{v}) - 1\right).\end{equation}
We define the average spacing gap measure for all $\ve{v} \in D_{n}\setminus D_{n-1}$ by \begin{align} \nu_n(x) & \ :=\ \frac1{N_{{\rm gaps}}(n)} \sum_{\ve{v}\in D_{n}\setminus D_{n-1}} \sum_{j=2}^{\kappa(\ve{v})} \delta\left(x - (r_j - r_{j-1})\right) \nonumber\\ &\  = \ \frac1{N_{{\rm gaps}}(n)} \sum_{\ve{v}\in D_{n}\setminus D_{n-1}} \left(\kappa(\ve{v})-1\right) \nu_{\ve{v};n}(x).\end{align} If $P_n(l)$ is the probability of getting a gap of length $l$ among all gaps from the decompositions of all $\ve{v}\in D_{n}\setminus D_{n-1}$, then \begin{equation} \nu_n(x) \ = \ \sum_{l=0}^{n-1} P_n(l) \delta(x - l). \end{equation}

\end{itemize}
\end{definition}

\begin{theorem}\label{thm: number_of_summands}
    Let $K_n$ be the random variable of Definition \ref{def:assocprobspace} and denote its mean by $\mu_n$. Then there exist constants $C_{\rm{Lek}}>0$ (the constant in the Generalized Lekkerkerker's Theorem for PLRS's \cite[Theorem 1.2 (Generalized Lekkerkerker)]{miller2014gaussian}), $d$, and $\gamma_1\in (0,1)$ depending only on $k$ such that
    $$\mu_n \ = \ C_{\rm{Lek}}n+d+o(\gamma_1^n). $$
    The mean $\mu_n$ and variance $\sigma_n^2$ of $K_n$ grow linearly in $n$, and $(K_n-\mu_n)/\sigma_n$ converges weakly to the standard normal $N(0,1)$ as $n\rightarrow \infty$.
\end{theorem}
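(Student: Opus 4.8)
The plan is to reduce the entire statement to the corresponding, already-known facts about $k$-Zeckendorf representations of integers, by constructing an explicit structure-preserving bijection between $D_n\setminus D_{n-1}$ and an interval of integers. Define $\phi\colon\Z^{k-1}\to\Z_{\geq 0}$ by the rule that if $SR(\ve v)=\sum_{i\geq 1}c_i\ve X_{-i}$ then $\phi(\ve v)\coloneq\sum_{i\geq 1}c_i x_{i+1}$; that is, $\phi$ reads the coefficient string of the vector Zeckendorf representation of $\ve v$ as the coefficient string of a $k$-Zeckendorf representation, shifting indices by one so that the index $-i$ of $\ve X_{-i}$ corresponds to the index $i+1$ of $x_{i+1}$. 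This is the bijection referred to in the abstract. That $\phi$ is a bijection is immediate: Theorem \ref{thm:vector-rep} identifies $\Z^{k-1}$ with the set of finitely supported $\{0,1\}$-valued sequences having no $k$ consecutive $1$'s, Theorem \ref{thm:kbonacci-zeckendorf} identifies that same set of sequences with $\Z_{\geq 0}$ (the no-$k$-consecutive rule being literally the $k$-bonacci legality rule), and $\phi$ is their composition.

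Next I would record the two properties that make $\phi$ do the work. First, for $n\geq 1$ it restricts to a bijection $D_n\setminus D_{n-1}\to\{m\in\Z: x_{n+1}\le m<x_{n+2}\}$: under the index shift, $J(\ve v)=n$ (equivalently $c_n=1$ and $c_i=0$ for $i>n$) holds precisely when the largest summand of the $k$-Zeckendorf representation of $\phi(\ve v)$ is $x_{n+1}$, and the integers with that property are exactly those in $[x_{n+1},x_{n+2})$; in particular this reproves $|D_n\setminus D_{n-1}|=x_{n+2}-x_{n+1}$ noted in Definition \ref{def:assocprobspace}. Second, $\phi$ preserves every combinatorial statistic of the representation: the number of summands of $SR(\ve v)$ equals the number of summands of the $k$-Zeckendorf representation of $\phi(\ve v)$ (both equal $\sum_i c_i$), and the multiset of gaps between consecutive present indices is identical, the index set being merely translated by one. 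Hence pushing the uniform measure $\Prob_n$ on $D_n\setminus D_{n-1}$ forward along $\phi$ gives the uniform measure on $[x_{n+1},x_{n+2})\cap\Z$, and $K_n$ has exactly the same law as the number of summands in the $k$-Zeckendorf representation of a uniformly random integer in $[x_{n+1},x_{n+2})$.

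With this reduction in hand, the theorem is a transcription of the one-dimensional theory. The $k$-bonacci sequence with its no-$k$-consecutive legality rule is a positive linear recurrence sequence, so the Generalized Lekkerkerker Theorem \cite[Theorem 1.2]{miller2014gaussian} supplies, for integers drawn from the relevant window, a mean number of summands of the form $C_{\rm{Lek}}n+d_0+o(\gamma_1^n)$ with $\gamma_1\in(0,1)$ depending only on $k$, and the Gaussian behavior results for such sequences \cite{miller2014gaussian,millerwang2012CLT} give that the variance over that window grows linearly in $n$ and that the normalized count converges weakly to $N(0,1)$. Our window $[x_{n+1},x_{n+2})$ differs from the window used in the cited statements only by a bounded shift of the index; such a shift changes the additive constant $d_0$ into a new constant $d$ (by an amount that is a bounded multiple of $C_{\rm{Lek}}$) and leaves $C_{\rm{Lek}}$, the exponential error term, the linear growth of $\sigma_n^2$, and the limiting normal distribution untouched. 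Transporting these conclusions back through $\phi$ yields $\mu_n=C_{\rm{Lek}}n+d+o(\gamma_1^n)$, the linear growth of $\mu_n$ and $\sigma_n^2$, and the central limit theorem for $(K_n-\mu_n)/\sigma_n$.

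The only delicate point is the bookkeeping at the interface with the cited results: one must verify that the $k$-bonacci sequence as indexed here (with $x_1=x_2=1$, so that the legal summands are $x_2,x_3,\dots$) satisfies verbatim the hypotheses of the PLRS framework of \cite{miller2014gaussian}, and that the mean, variance, and CLT statements there are stated with enough uniformity in the window index to survive both the re-indexing forced by our choice of $D_n$ and, if necessary, the passage from a cumulative count over $[0,x_{n+2})$ to a single window $[x_{n+1},x_{n+2})$. If the cited statements are not literally in the form needed, the fallback is to rerun the standard transfer-matrix and generating-function computation directly for the $k$-bonacci recurrence; this is routine, and I anticipate no genuine obstruction beyond this matching of conventions.
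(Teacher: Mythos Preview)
Your proposal is correct and follows essentially the same approach as the paper: the paper's proof is a one-line citation of \cite[Theorems 2 and 3]{miller2014gaussian}, relying on Lemma~\ref{Lemma: simple_bijection}, which is precisely your bijection $\phi$ (the paper writes it as $f(\ve v)=\sum_{i\ge 2}a_{i-1}x_i$). If anything, you are more explicit than the paper about the bookkeeping involved in matching windows and index conventions, which the paper leaves entirely implicit.
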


The following is a standard lemma which helps us state a result on the distribution of gaps.

\begin{lemma}[Generalized Binet's Formula]\label{lem:binet}
Let $\lambda_1,\dots,\lambda_k$ be the roots of the characteristic polynomial for the $k$-bonacci sequence, which is
\begin{equation}\label{eq:char_poly}
p(x) \ \coloneq\  x^k - x^{k-1} - \dots - x - 1 = 0.
\end{equation}
Order the roots so that $|\lambda_1| \ge \dots \ge |\lambda_k|$. Then $\lambda_1>1$ is the unique positive real root, and there exist constants such that
\begin{equation}\label{eq:generalized_binet}
x_{n+1} \ =\ a_1 \lambda_1^n + O(n^{k-2} \lambda_2^n).
\end{equation}
\end{lemma}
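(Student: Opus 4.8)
The plan is to split the proof into three parts: (i) locating the dominant root $\lambda_1$, (ii) expanding $x_{n+1}$ via the general solution theory for linear recurrences, and (iii) bounding the subdominant contribution. For part (i): when $x>0$, dividing $p(x)=0$ by $x^k$ shows it is equivalent to $h(x)\coloneq x^{-1}+x^{-2}+\cdots+x^{-k}=1$, and since $h$ is strictly decreasing on $(0,\infty)$ (as $h'(x)=-x^{-2}-2x^{-3}-\cdots-kx^{-(k+1)}<0$) with $h(1)=k\ge 3>1$ and $h(x)\to 0$ as $x\to\infty$, there is a unique positive real root $\lambda_1$; moreover $p(1)=1-k<0$ and $p(2)=2^k-(2^k-1)=1>0$ pin it down to $\lambda_1\in(1,2)$. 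Writing $p(x)=-x^k(h(x)-1)$ on $(0,\infty)$ and differentiating at $\lambda_1$ gives $p'(\lambda_1)=-\lambda_1^k h'(\lambda_1)>0$, so $\lambda_1$ is a \emph{simple} root. If $\lambda\in\C$ is any root with $|\lambda|\ge\lambda_1$, then $\lambda^k=\lambda^{k-1}+\cdots+1$ together with the triangle inequality gives $|\lambda|^k\le|\lambda|^{k-1}+\cdots+1$, i.e. $h(|\lambda|)\ge 1=h(\lambda_1)$, which forces $|\lambda|\le\lambda_1$ and hence $|\lambda|=\lambda_1$; equality in the triangle inequality then forces $\lambda^{k-1},\dots,\lambda,1$ to be nonnegative reals (the summand $1$ being present), so $\lambda=\lambda_1$. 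Thus $|\lambda_i|<\lambda_1$ for every $i\ge 2$, which both justifies the stated ordering and identifies $\lambda_1$ as the root of strictly largest modulus.

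For parts (ii) and (iii), I would invoke the standard structure theorem for linear recurrences: $x_{n+1}=\sum_{i}P_i(n)\lambda_i^n$, summed over the distinct roots $\lambda_i$, where each $P_i$ is a polynomial of degree strictly less than the multiplicity of $\lambda_i$. Since $\lambda_1$ is simple, $P_1\equiv a_1$ is a constant; the remaining roots have multiplicities summing to $k-1$, so each has multiplicity at most $k-1$ and $\deg P_i\le k-2$. Grouping those terms and using $|\lambda_i|\le|\lambda_2|$ for $i\ge 2$ yields $x_{n+1}=a_1\lambda_1^n+O(n^{k-2}|\lambda_2|^n)$, which is the asserted formula (here the $\lambda_2^n$ of the statement is to be read as $|\lambda_2|^n$, since $\lambda_2$ need not be real). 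Finally $a_1\ne 0$: the ordinary generating function of the sequence is $\sum_{n\ge 1}x_nt^n=t/(1-t-t^2-\cdots-t^k)$, whose denominator equals $t^kp(1/t)$ and hence has a simple zero at $t=1/\lambda_1$ (equivalently $p'(\lambda_1)\ne 0$, already shown); the partial-fraction residue there is nonzero because the numerator $t$ does not vanish at $t=1/\lambda_1$, and a short computation gives $a_1=\lambda_1^{k-2}/p'(\lambda_1)>0$.

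I do not anticipate a real obstacle, as this is essentially a textbook argument; the only two steps needing slight care are the equality case of the triangle inequality used to upgrade $|\lambda|=\lambda_1$ to $\lambda=\lambda_1$ in part (i), and the (routine) identification of the generating function as $t/(1-t-\cdots-t^k)$, which underlies the verification that $a_1\ne 0$.
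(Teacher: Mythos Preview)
Your proof is correct and complete. Note, however, that the paper does not actually supply a proof of this lemma: it is introduced with the sentence ``The following is a standard lemma which helps us state a result on the distribution of gaps'' and then simply stated. So there is no argument in the paper to compare against; you have filled in a proof the authors chose to omit as folklore.

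One tiny arithmetic slip: in your closing computation you write $a_1=\lambda_1^{k-2}/p'(\lambda_1)$, but tracing through the residue at $t_0=1/\lambda_1$ of $G(t)=t/(1-t-\cdots-t^k)$ gives $a_1=\lambda_1^{k-1}/p'(\lambda_1)$ (check $k=2$, where Binet gives $a_1=\phi/\sqrt{5}$ and $p'(\phi)=\sqrt{5}$). This does not affect the conclusion $a_1>0$, which is all that matters here.
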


\begin{theorem}\label{thm:avegapsbulk} Let $\lambda_1 > 1$ denote the largest root (in absolute value) of the characteristic polynomial for the $k$-bonacci sequence, and let $a_1$ be the leading coefficient in the generalized Binet expansion. Let $P_n(l)$ be the probability of having a gap of length $l$ among the decompositions of $\ve{v} \in D_n \setminus D_{n-1}$, and let $P(l) = \lim_{n\to\infty} P_n(l)$. Then
\begin{equation}
P(l) \ = \  \begin{cases}
0 & \text{if } l = 0 \\
\lambda_1^{-1}(\frac{1}{C_{\rm Lek}})(\lambda_1(1-2a_1) + a_1) & \text{if } l=1 \\
(\lambda_1-1)^2 \left(\frac{a_1}{C_{\rm Lek}}\right) \lambda_1^{-l} & \text{if } l \ge 2.
\end{cases}
\end{equation} 
In particular, the probability of having a gap of length $l \ge 2$ decays geometrically, with decay constant the largest root of the characteristic polynomial.
\end{theorem}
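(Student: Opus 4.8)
The plan is to reduce the problem to the one-dimensional setting via the projection map $S_n$ and a bijection $D_n \setminus D_{n-1} \leftrightarrow \{x_{n+1}, x_{n+1}+1, \dots, x_{n+2}-1\}$, so that the gap distribution in $k$-bonacci vector Zeckendorf representations coincides exactly with the gap distribution for $k$-Zeckendorf representations of integers in the block $[x_{n+1}, x_{n+2})$. Indeed, by Definition \ref{defn: d_n} we have $|D_n \setminus D_{n-1}| = x_{n+2}-x_{n+1}$, matching the count of integers whose $k$-Zeckendorf representation has maximal index exactly $n$; and the projection map takes $SR(\ve v) = \sum c_i \ve X_{-i}$ to $\sum c_i x_{n+1-i} \pmod{x_{n+1}}$, which (once we know $J(\ve v) \le n$, guaranteed on $D_n \setminus D_{n-1}$) is honestly the $k$-Zeckendorf representation of an integer in the right range. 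Thus $P_n(l)$ equals the probability that a uniformly chosen integer in $[x_{n+1}, x_{n+2})$ has a gap of length $l$ somewhere in its $k$-Zeckendorf representation.

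Once that reduction is in place, I would invoke the established machinery for gaps in PLRS decompositions (the same circle of results as \cite{bower2015gaps, beckwith2013average} and \cite{miller2014gaussian} used for Theorem \ref{thm: number_of_summands}). The key is a counting identity: $N_{\mathrm{gaps}}(n) = \sum_{\ve v}(\kappa(\ve v)-1)$, which by Theorem \ref{thm: number_of_summands} is asymptotically $(C_{\mathrm{Lek}} n)(x_{n+2}-x_{n+1})$ up to lower-order terms. For the numerator, I would count, for each fixed gap length $l$, the number of (decomposition, adjacent-summand-pair) incidences realizing that gap across all $\ve v \in D_n \setminus D_{n-1}$. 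Fixing two summand indices $r-l$ and $r$ with no summands strictly between them and a legal gap, the number of completions splits multiplicatively into the count of legal "prefixes" below index $r-l$ and legal "suffixes" above index $r$, each governed by the recurrence and hence by $x_m$'s; summing over $r$ and applying the generalized Binet formula (Lemma \ref{lem:binet}), $x_{n+1} = a_1 \lambda_1^n + O(n^{k-2}\lambda_2^n)$, collapses the sum to the stated closed forms. The $l=0$ case is immediate since no two consecutive $k$-bonacci vectors can both appear (that is $k \ge 2$ consecutive ones forbidden, so certainly two); the $l=1$ case needs separate bookkeeping because a gap of $1$ means two indices differing by one, which interacts with the "no $k$ consecutive" rule differently at the boundary of a run; and $l \ge 2$ gives the clean geometric term $(\lambda_1-1)^2 (a_1/C_{\mathrm{Lek}})\lambda_1^{-l}$.

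I expect the main obstacle to be making the reduction to one dimension fully rigorous, specifically verifying that on $D_n \setminus D_{n-1}$ the projection $S_{n+1}$ is actually a bijection onto $[x_{n+1}, x_{n+2})$ that carries $SR(\ve v)$ to a genuine $k$-Zeckendorf representation (not merely a congruence class) — this is where the lower bound on $J(\ve v)$ mentioned in the abstract is essential, since $S_{n+1}$ only respects the structure when the representation does not reach past index $n$. After that, the gap-counting asymptotics are a routine (if somewhat intricate) transfer of the one-dimensional argument: one sets up the prefix/suffix generating counts, identifies them with shifted $k$-bonacci numbers, and extracts leading-order behavior from Binet. A secondary subtlety is handling the finitely many small-$l$ edge cases and confirming that the normalization by $N_{\mathrm{gaps}}(n)$ (rather than by $\kappa(\ve v)$ pointwise) is what produces the clean limit, as flagged in the definition of the average spacing gap measure $\nu_n$; the $1/C_{\mathrm{Lek}}$ factors in the statement are exactly the fingerprint of dividing by the mean number of summands.
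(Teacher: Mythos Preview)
Your approach is essentially the paper's: reduce to the one-dimensional $k$-Zeckendorf setting via a gap-preserving bijection $D_n\setminus D_{n-1}\to [x_{n+1},x_{n+2})\cap\Z$, then invoke the known PLRS gap results (\cite[Theorem~1.5]{beckwith2013average}, \cite[Theorem~1.5]{bower2015gaps}); the paper's proof is literally that one-line citation, together with Lemma~\ref{Lemma: simple_bijection}. Two small corrections: the clean bijection is $f$ from Lemma~\ref{Lemma: simple_bijection} (sending $\sum_i c_i\ve X_{-i}\mapsto \sum_i c_i x_{i+1}$), not $S_{n+1}$, which reverses indices and lands in $[0,x_{n+1})$ rather than $[x_{n+1},x_{n+2})$---using $f$ dissolves the rigor worry you flag; and your justification for $P(0)=0$ is off, since for $k\ge 3$ two consecutive summands \emph{are} permitted---$l=0$ vanishes simply because the $r_j$ are distinct indices.
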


We further extend the study of summand minimality to the multidimensional case, giving the following.

\begin{theorem} \label{thm: vec_sum_minimality}
    Let $\ve{v} \in \Z^{k-1}$. Then $SR(\ve{v})=\sum_{i \geq 1}c_i\ve{X}_{-i}$ is summand minimal: that is, there is no way to write $\ve{v}$ as a linear combination of $k$-bonacci vectors with nonnegative integer coefficients using strictly fewer terms.
\end{theorem}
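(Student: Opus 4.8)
The plan is to reduce the multidimensional summand-minimality statement to the one-dimensional case via the projection map $S_n$, where summand minimality for $k$-Zeckendorf representations of integers is already known (it is a standard fact that $k$-Zeckendorf representations are summand minimal among representations of $n$ as sums of $k$-bonacci numbers with nonnegative integer coefficients; this is in the literature on far-difference and PLRS representations). First I would fix $\ve{v} \in \Z^{k-1} \setminus \{\ve{0}\}$ (the case $\ve{v} = \ve{0}$ being trivial) and suppose, for contradiction, that $\ve{v} = \sum_{i=1}^{N} a_i \ve{X}_{-n_i}$ is some representation with $a_i \in \Z_{\geq 0}$ using strictly fewer summands (counted with multiplicity) than $SR(\ve{v})$. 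Let $\kappa = \kappa(\ve{v})$ be the number of summands in $SR(\ve{v})$, so the competing representation uses $\sum a_i \leq \kappa - 1$ summands.

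The key step is to choose $n$ large enough and apply $S_{n}$ to both representations. By Lemma~\ref{lem:projection}, $S_n(SR(\ve{v})) \equiv \sum c_i x_{n-i} \pmod{x_n}$, and if $n$ is chosen so that $n - 1 > J(\ve{v})$ and also larger than the index appearing in the competing representation (here I would need to invoke Lemma~\ref{lem:inductive-termbounding} or Theorem~\ref{thm:small-step-bound} to bound $J(\ve{v})$, and use the given $n_i$'s for the competing representation), then both sums $\sum c_i x_{n-i}$ and $\sum a_i x_{n - n_i}$ are genuine equalities of integers in $[0, x_n)$ — no reduction mod $x_n$ is actually needed — and hence both equal the same integer, call it $m = S_n(\ve{v})$. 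The first gives precisely the $k$-Zeckendorf representation of $m$ (since the $c_i$ satisfy the no-$k$-consecutive condition and are in $\{0,1\}$, and all indices $n-i$ are at least $2$ once $n$ is large), using $\kappa$ summands; the second is a representation of $m$ as a sum of $k$-bonacci numbers with nonnegative integer coefficients using $\sum a_i \leq \kappa - 1$ summands. This contradicts summand minimality of the $k$-Zeckendorf representation of the integer $m$.

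A couple of technical points must be handled carefully. One must check that for $n$ sufficiently large every index $n - i$ and $n - n_i$ occurring is $\geq 2$, so that the $k$-bonacci numbers involved are the "legitimate" ones (recall $x_1 = x_2 = 1$ causes the index restriction $i \geq 2$ in Theorem~\ref{thm:kbonacci-zeckendorf}); this is where choosing $n$ strictly larger than all relevant indices plus a constant matters. One must also confirm that the competing representation, after projection, really does have $\sum a_i$ terms as a multiset of $k$-bonacci numbers — coefficients $a_i > 1$ are fine since summand minimality for integers allows repeated summands and is stated with respect to nonnegative integer coefficients. The main obstacle I anticipate is not conceptual but bookkeeping: ensuring the chosen $n$ simultaneously (i) exceeds $J(\ve{v})$ so $S_n$ faithfully encodes $SR(\ve{v})$ via Lemma~\ref{lem:projection} with no wraparound, (ii) exceeds $\max_i n_i$ from the competing representation so that representation also lifts cleanly, and (iii) is large enough that all shifted indices land in the valid range $\geq 2$. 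Once $n$ is pinned down, the contradiction is immediate from the one-dimensional result, and we conclude that no representation of $\ve{v}$ with strictly fewer nonnegative-integer-weighted $k$-bonacci vector summands exists, i.e. $SR(\ve{v})$ is summand minimal.
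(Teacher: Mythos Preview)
Your proposal is correct and follows essentially the same route as the paper: project both the SR and the competing representation through $S_{m+1}$ (the paper takes $m$ to be the largest index appearing in either representation, which is a concrete instance of your ``sufficiently large $n$''), observe via Lemma~\ref{lem:projection} that the SR lands on the $k$-Zeckendorf decomposition of $S_{m+1}(\ve{v})$, and then invoke one-dimensional summand minimality (the paper cites this as Corollary~\ref{lem:kbonacci-number-minimality}, derived from Theorem~\ref{thm:plrs-summand-minimality}) to obtain the contradiction $d\le c$. Your explicit attention to the index cutoff $\ge 2$ and to possible wraparound is more careful than the paper's terse write-up, but the argument is the same.
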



\section{Improved Algorithms for Finding Vector Representations}\label{sec:findrep}

In this section, we fix a choice of $k>1$ and $\ve{v} \in \Z^{k-1} \backslash \{ \ve{0} \}$, noting that the SR of $\ve{0}$ is the empty representation.
We provide two algorithms to compute the SR of $\ve{v}$ which improve the algorithmic approach of \cite{anderson2011multidimensional}. We first show how to use the greedy algorithm to find the vector decomposition of $\ve{v}$ for a specific $j$.

\begin{lemma}
\label{lem:vec-zeck-greedy-algo}
    Let $k>1$ and $\ve{v} \in \Z^{k-1} \backslash \{ \ve{0} \}$. Let $S_{j+1}(\ve{v}) = \sum_{i=1}^j c_i x_{j+1-i} \pmod{x_{j+1}}$ be the output of the vector Zeckendorf greedy algorithm for $\ve{v}$. Then $\sum_{i=1}^j c_i \ve{X}_{-i}$ is the SR of $\ve{v}$. 
\end{lemma}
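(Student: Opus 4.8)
The plan is to compare the output of the vector Zeckendorf greedy algorithm with the genuine $SR(\ve{v})$ and show they coincide, by pushing both through the projection map $S_{j+1}$ and appealing to the uniqueness of the $k$-Zeckendorf representation in Theorem~\ref{thm:kbonacci-zeckendorf}. The rough shape is: the algorithm outputs a vector whose projection agrees (as a residue mod $x_{j+1}$) with $S_{j+1}(\ve{v})$ and is manifestly an SR; on the other hand $SR(\ve{v})$ itself projects, via Lemma~\ref{lem:projection}, to a $k$-Zeckendorf representation of the same residue; since there is only one $k$-Zeckendorf representation of $S_{j+1}(\ve{v})$, the two coefficient strings must be identical, hence the two vectors are equal.

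In more detail: first I would write $SR(\ve{v}) = \sum_{i=1}^{J} d_i \ve{X}_{-i}$ with $J = J(\ve{v})$, $d_i \in \{0,1\}$, and no $k$ consecutive $d_i$ equal to $1$; since $j \ge J(\ve{v})$ I may pad with $d_i = 0$ for $J < i \le j$ and regard $SR(\ve{v}) = \sum_{i=1}^{j} d_i \ve{X}_{-i}$. Applying $S_{j+1}$ and Lemma~\ref{lem:projection} gives $S_{j+1}(\ve{v}) \equiv \sum_{i=1}^{j} d_i x_{j+1-i} \pmod{x_{j+1}}$. The heart of the argument is then to verify that, after the substitution $\ell = j+1-i$, the right-hand side is a \emph{legal} $k$-Zeckendorf representation and that the congruence is actually an equality: the coefficients lie in $\{0,1\}$ and avoid $k$ consecutive $1$'s (inherited from $SR(\ve{v})$, since reversing the index string $i \mapsto j+1-i$ preserves the no-$k$-in-a-row property), the top index is at most $j$, the bottom index $j+1-J(\ve{v})$ is at least $2$ by the hypothesis relating $j$ to $J(\ve{v})$, and hence the value of any $k$-Zeckendorf representation using only indices in $\{2,\dots,j\}$ lies in $[0,x_{j+1})$, forcing $\sum_{i=1}^{j} d_i x_{j+1-i} = S_{j+1}(\ve{v})$ on the nose. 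Finally, Step~2 of the vector Zeckendorf greedy algorithm produces, by construction, the $k$-Zeckendorf representation $\sum_{i=1}^j c_i x_{j+1-i} = S_{j+1}(\ve{v})$; by the uniqueness in Theorem~\ref{thm:kbonacci-zeckendorf} we get $c_i = d_i$ for all $i$, so the output $\sum_{i=1}^{j} c_i \ve{X}_{-i}$ equals $\sum_{i=1}^{j} d_i \ve{X}_{-i} = SR(\ve{v})$.

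The main obstacle is precisely the middle verification: that the image under $S_{j+1}$ of the true $SR(\ve{v})$ really is a bona fide $k$-Zeckendorf representation of an element of $[0,x_{j+1})$ — in particular that no index below $2$ creeps in (equivalently, no stray copy of $x_1$, which is degenerate with $x_2$) and that the sum does not "wrap around" modulo $x_{j+1}$. This is the only place where one must quantify how far $j$ lies above $J(\ve{v})$; the remaining pieces are bookkeeping together with the two uniqueness statements (for $k$-Zeckendorf representations of integers and for SRs of vectors). An equivalent way to package the same step, which I might use instead, is to prove that $S_{j+1}$ is injective on the set of vectors whose maximal $k$-bonacci index is strictly below $j$, and to note that both $\ve{v}$ and the algorithm's output lie in this set and share the same image.
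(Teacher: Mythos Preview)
Your approach is essentially the paper's, spelled out in full: the paper's entire proof is the single sentence ``the map $S_{j+1}$ is injective on the set of vectors with SRs whose indices are bounded by $j$,'' and your argument via Lemma~\ref{lem:projection} together with the uniqueness in Theorem~\ref{thm:kbonacci-zeckendorf} is precisely the content of that injectivity claim --- which you also note explicitly as an alternative packaging at the end.

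One caveat worth recording: you assert that the bottom index $j+1-J(\ve{v})$ is at least $2$ ``by the hypothesis relating $j$ to $J(\ve{v})$,'' but the stated hypothesis $j \ge J(\ve{v})$ only yields $j+1-J(\ve{v}) \ge 1$. At the boundary $j = J(\ve{v})$ the term $x_1$ can enter the sum $\sum d_i x_{j+1-i}$, and since $x_1 = x_2$ the uniqueness in Theorem~\ref{thm:kbonacci-zeckendorf} (which restricts to indices $\ge 2$) no longer applies cleanly; indeed a cardinality count gives $|D_j| = x_{j+2} > x_{j+1} = |[0,x_{j+1})|$, so $S_{j+1}$ cannot be injective on all of $D_j$. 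The paper's one-line proof glosses over this same edge case, and your instinct to flag it as ``the main obstacle'' is on target; the clean fix is to work under $j > J(\ve{v})$ (equivalently, use $S_{j+2}$).
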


\begin{proof}
    This follows because the map $S_{j+1}$ is injective on the set of vectors with SRs whose indices are bounded by $j$.
\end{proof}

\subsection{Small Steps Algorithm}

Let $k>1$ and $\ve{v} \in \Z^{k-1} \backslash \{ \ve{0} \}$. Lemma \ref{lem:vec-zeck-greedy-algo} requires us to have a value of $j$ before using the vector Zeckendorf greedy algorithm to generate the SR of $\ve{v}$. We verify the small steps bound on $j$ by writing $\ve{v}$ as a linear combination of $\{ \ve{X}_{-i} \}_{i=1}^k$ and applying Lemma \ref{lem:inductive-termbounding}.

\begin{proof}[Proof of Theorem \ref{thm:small-step-bound}]
    First, suppose that $v_i <0$ for some $i \in \{1,...,k-1\}$. Note that for any choice of $k > 1$, $\ve{X}_{-k}=(-1,-1,...,-1)$. Hence we can write
    $$ \ve{v} ~=~ \abs{v_m} \ve{X}_{-k} + \sum_{i=1}^{k-1} (v_i-v_m) \ve{X}_{-i}. $$
    By Lemma \ref{lem:inductive-termbounding}, we have 
    $$J(\ve{v}) ~\leq~ |v_m|k+\sum_{i=1}^{k-1}(v_i-v_m)k \ = \ j_{ssb},$$
    Otherwise, $v_i \geq 0$ for every $i \in \{1,...,k-1\}$. In this case, we write
    $$ \ve{v} ~=~ \sum_{i=1}^{k-1} v_i \ve{X}_{-i}. $$
    Again, by Lemma \ref{lem:inductive-termbounding},
    $$ J(\ve{v}) ~\leq~ k\left(\sum_{i=1}^{k-1} v_i - 1 \right) + k - 1
    ~=~ k\sum_{i=1}^{k-1}v_i -1 ~=~ j_{ssb}.$$
\end{proof}

\begin{remark}
    The bound $j_{ssb}$ generated by the small steps algorithm is larger than necessary even for small $k$ and vectors with relatively small integer entries. The following examples illustrate this.
\end{remark}

\begin{example}
    \label{ex:(2,-2)}
    Consider $\ve{v}=(2,-2)$. As $k=3$, we get $j=2(3)+4(3)=18$. Then $$S_{19}(2,-2) ~\equiv~ 2(19513)-2(10609) \pmod{35890} ~\equiv~ 17808 \pmod{35890}.$$ The $k$-bonacci number greedy algorithm gives the decomposition \begin{align*}17808&~=~10609+5768+927+504\\&~=~x_{17}+x_{16}+x_{13}+x_{9}\\&~=~x_{19-2}+x_{19-3}+x_{19-6}+x_{19-7}.\end{align*} Then the SR of $(2,-2)$ is $\ve{X}_{-2}+\ve{X}_{-3}+\ve{X}_{-6}+\ve{X}_{-7}=(0,1)+(-1,-1)+(-2,-3)+(5,1)$.
\end{example}
\begin{example}
    Consider $\ve{v}=(3,0)$. As $k=3$, we get $j=3\cdot3-1=8$. Then $S_{9}(3,0)=3\cdot44 \pmod{81} = 51$. The $k$-bonacci number greedy algorithm gives the decomposition  \begin{align*}51&~=~44+7\\&~=~x_{8}+x_{5}\\&~=~x_{9-1}+x_{9-4}.\end{align*} So the SR of $(3,0)$ is $\ve{X}_{-1}+\ve{X}_{-4}=(1,0)+(2,0)$.
\end{example}
The following proposition is proved in Appendix \ref{appendix:complexity}.
\begin{proposition}
\label{prop:complexity-small-steps}
    The time complexity of the small steps algorithm is $O(k^3 \norm{\ve{v}}_\infty)$.
\end{proposition}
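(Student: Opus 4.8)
The plan is to account for the cost of each of the three stages of the small steps algorithm separately: (i) computing the coefficients $|v_m|$ and $v_i - v_m$ (equivalently $v_i$ in the nonnegative case) that express $\ve{v}$ as a linear combination of $\{\ve{X}_{-i}\}_{i=1}^k$ and reading off the value $j = j_{ssb}$ from Theorem~\ref{thm:small-step-bound}; (ii) computing $S_{j+1}(\ve{v}) = \ve{v}\cdot(x_j,\dots,x_{j+1-(k-1)}) \pmod{x_{j+1}}$, which requires generating $k$-bonacci numbers up through index $j+1$; and (iii) running the $k$-bonacci number greedy algorithm on the scalar $S_{j+1}(\ve{v}) \in [0,x_{j+1})$ and translating the resulting scalar decomposition into the vector SR. Throughout I will treat arithmetic on integers of bounded size (polynomial in $k$ and $\norm{\ve{v}}_\infty$) as unit cost, consistent with how Lemma~\ref{lem:old-algo-complexity} is stated, and I will bound $j_{ssb}$ itself: from its definition, $j_{ssb} = O(k^2\norm{\ve{v}}_\infty)$ in the negative-entry case (the sum has $k-1$ terms, each at most $2\norm{\ve{v}}_\infty$, times $k$) and $j_{ssb} = O(k\norm{\ve{v}}_\infty)$ in the nonnegative case, so $j+1 = O(k^2\norm{\ve{v}}_\infty)$ in all cases.

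For stage (i), finding $v_m$ and forming the $k$ coefficients is $O(k)$ work. For stage (ii), we generate $x_1,\dots,x_{j+1}$ by the recurrence, each new term being a sum of $k$ previous terms, so this is $O(kj) = O(k^3\norm{\ve{v}}_\infty)$ operations; forming the dot product $\ve{v}\cdot(x_j,\dots,x_{j+1-(k-1)})$ and reducing mod $x_{j+1}$ is a further $O(k)$ operations. For stage (iii), the $k$-bonacci number greedy algorithm on an input in $[0,x_{j+1})$ produces a decomposition whose largest index is at most $j$; each greedy step finds the maximal $x_\ell \le R$ (a scan through the already-computed list of $k$-bonacci numbers, $O(j)$ per step, or $O(\log j)$ with binary search) and subtracts, and there are at most $j$ steps, so this stage is $O(j^2) = O(k^4\norm{\ve{v}}_\infty^2)$ in the naive implementation — which is worse than claimed. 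The key point I must use to get down to $O(k^3\norm{\ve{v}}_\infty)$ is that the greedy algorithm can be run in a single descending pass: maintain a pointer to the current $k$-bonacci index and walk it downward from $j+1$ toward $2$, at each index deciding whether to include that term (subtracting it from $R$ if so) subject to the ``no $k$ consecutive'' constraint. This is $O(j)$ total work for stage (iii), and translating $\sum c_i x_{j+1-i}$ into $\sum c_i \ve{X}_{-i}$ is just relabeling, $O(j)$. Summing the three stages gives $O(k) + O(k^3\norm{\ve{v}}_\infty) + O(k^3\norm{\ve{v}}_\infty) = O(k^3\norm{\ve{v}}_\infty)$, as claimed.

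The main obstacle is making stage (iii) run in linear time in $j$ rather than quadratic: one has to argue carefully that the single descending pass correctly reproduces the greedy decomposition, i.e.\ that once a term $x_\ell$ is either taken or skipped, no later (smaller-index) choice forces us to revisit it. This follows from the defining property of the greedy algorithm — after subtracting the maximal admissible $k$-bonacci number, the remainder $R$ is strictly less than that number, and in fact less than the next term down once the consecutive-run constraint is tracked — so the pass never needs to back up. A secondary point worth stating explicitly is the unit-cost arithmetic model: all integers manipulated (the entries of $\ve{v}$, the coefficients, the $k$-bonacci numbers up to index $j+1$, and the running remainder $R$) have bit-length $O(j) = O(k^2\norm{\ve{v}}_\infty \cdot \log\lambda_1) = O(k^2\norm{\ve{v}}_\infty)$, so if one prefers a bit-complexity count the bound becomes $O(k^3\norm{\ve{v}}_\infty)$ times a $\mathrm{poly}(k,\norm{\ve{v}}_\infty)$ factor; since Lemma~\ref{lem:old-algo-complexity} and the statement here count ``steps'' in the word-RAM sense, I will adopt that convention and note it at the start of the proof.
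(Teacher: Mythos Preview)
Your proposal is correct and takes essentially the same approach as the paper: both break the algorithm into the same stages (compute $j_{ssb}$, generate the $k$-bonacci numbers up to index $j+1$, apply $S_{j+1}$, run the scalar greedy algorithm), bound $j_{ssb} = O(k^2\norm{\ve{v}}_\infty)$, and combine $O(kj)$ for the recurrence with $O(j)$ for the greedy pass to get $O(k^3\norm{\ve{v}}_\infty)$. Your write-up is actually more careful than the paper's in two respects---you justify the $O(j)$ cost of the greedy step via a single descending pass rather than asserting it, and you make the unit-cost arithmetic model explicit---whereas the paper additionally accounts for the modular reduction in $S_{j+1}$ via repeated subtraction as $O(\norm{\ve{v}}_\infty)$ operations, a detail you absorb into unit-cost arithmetic; neither difference affects the final bound.
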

The time complexity of the algorithm in \cite{anderson2011multidimensional} is $O(k\cdot L\cdot \norm{\ve{v}}_1)$ (see Appendix \ref{appendix:complexity}), where $L$ is the number of summands in the decomposition of $\ve{v}$. For fixed $k$, this is $O(\norm{\ve{v}}_\infty \log\norm{\ve{v}}_\infty)$. As well as being slow, calculating the SR of $\ve{v}$ requires us to calculate all of the SRs along a path from $\ve{0}$ to $\ve{v}$.

The time complexity of the Small Steps Algorithm is $O(k^3 \norm{\ve{v}}_\infty)$ [see Appendix \ref{appendix:complexity}]. For fixed $k$, it is $O(\norm{\ve{v}}_\infty)$. Although this is an improvement over the algorithm in \cite{anderson2011multidimensional}, Example \ref{ex:(2,-2)} demonstrates that we end up working with very large numbers, even for small $k$ and $\ve{v}$ with small integer entries. This arises from the linear bound for $J(\ve{v})$. This motivates our second algorithm, which finds an improved bound on $j$.

\subsection{Large Steps Algorithm}

We first illustrate the Large Steps Algorithm with an example.

\begin{example}
    Consider $\ve{v}=(2,-2)$. We let $\ve{v}_1 = (2,-2)$. The closest 3-bonacci vector to $\ve{v}$ is $\ve{X}_{-4}=(2,0)$. Then we take $\ve{v}_2 = \ve{v}_1 - \ve{X}_{-4} = (0,-2)$. The closest 3-bonacci vector to $\ve{v}_2$ is $\ve{X}_{-3}=(-1,-1)$, so we set $\ve{v}_3 = \ve{v}_2 - \ve{X}_{-3} = (1,-1)$. The closest 3-bonacci vector to $\ve{v}_3$ is $\ve{x}_{-1} = (1,0)$, so we set $\ve{v}_4 = \ve{v}_3 - \ve{X}_{-1} = (0,-1)$. Now, subtracting off the closest 3-bonacci vector to $\ve{v}_4$ would not reduce the size, so we use the small steps algorithm to write $\ve{v}_4 = \ve{X}_{-3} + \ve{X}_{-2}$. This gives the decomposition
    $$ \ve{v} ~=~ \ve{X}_{-4} + 2\ve{X}_{-3} + 2\ve{X}_{-1},$$
    and Lemma \ref{lem:inductive-termbounding} tells us that
    $$ j_{lsb} ~=~ 4 + 3 \cdot 4 = 16. $$
    We now proceed as before with the vector Zeckendorf greedy algorithm.
\end{example}

\begin{example}
    Consider $\ve{v} = (3,0)$. We let $\ve{v}_1 = (3,0)$. The closest 3-bonacci vector to $\ve{v}$ is $\ve{X}_{-4} = (2,0)$, so we set $\ve{v}_2 = \ve{v}_1 - \ve{X}_{-4} = (1,0) = \ve{X}_{-1}$. This gives the decomposition
    $$ \ve{v} ~=~ \ve{X}_{-4} + \ve{X}_{-1}, $$
    which translates to
    $$ j_{lsb} ~=~ 4 + 3 \cdot 1 = 7$$
    by Lemma \ref{lem:inductive-termbounding}. We then proceed with the vector Zeckendorf greedy algorithm.
\end{example}

\subsection{Analysis of Large Steps Algorithm}

We now utilize a geometric argument to prove that the value $j_{lsb}$ provided by the large step algorithm is logarithmic in $\norm{\ve{v}}_\infty$ for $k=3$. We conjecture that this is true for all $k$.

\begin{proof}[Proof of Theorem \ref{thm:large-steps-j-bound-k=3} ]
    We prove the following statements.
    \begin{enumerate}
        \item There exist natural numbers $P$ and $M$ such that for any sequence of $P$ consecutive 3-bonacci vectors starting at $\ve{X}_{-n}$ for $n\geq M$, and any wedge in $\R^2$ centered about $\ve{0}$ with angle $2\pi/3$, one of the 3-bonacci vectors lies in the wedge. \label{statement:spiraliness}
        \item There exist $\lambda<1$ and $R>0$ such that for any $\ve{v} \in \Z^2$ with $\norm{\ve{v}}_2 \geq R$, there exists $\ve{X}_{-n}$ with
        $$ \norm{\ve{v}-\ve{X}_{-n}}_2 ~\leq~ \lambda \ve{v}.$$ \label{statement:close-fib}
        \item There exist constants $c_3$ and $d_3$ such that
        $$j_{lsb} ~\leq~ c_3 \log \norm{\ve{v}}_2 + d_3.$$ \label{statement:log-bound}
    \end{enumerate}

    Proof of \eqref{statement:spiraliness}: We claim that $P=M=9$ are sufficient for $k=3$. We first rewrite the 3-bonacci vectors in $\C$, letting $Z_{-n} = \Phi(X_{-n})$, where $\Phi(v_1,v_2) = v_1 + i v_2$. Then $Z_{-n}$ satisfies the recurrence relation $Z_{-n}=Z_{-(n-3)}-Z_{-(n-2)}-Z_{-(n-1)}$ for $n>2$ with $Z_0=0$, $Z_{-1}=1$, and $Z_{-2}=i$. Then, we solve the recurrence relation to get
    $$ Z_{-n} ~=~ A r^n e^{in\theta} + B r^n e^{-in\theta} + C \epsilon^n,$$
    where \begin{align*}
        r &~=~ 1.3562, \\
        \theta &~=~ 2.1762, \\
        \epsilon &~=~ 0.5437, \\ A &~=~ -0.4578-0.3103i,\\ B &~=~ -0.0612-0.0259i,\\ C &~=~ 0.5190+0.3362i,
    \end{align*} each to 4 decimal places.\\
    \\
    \begin{figure}[H]
    \centering
    \includegraphics[width=0.6\linewidth]{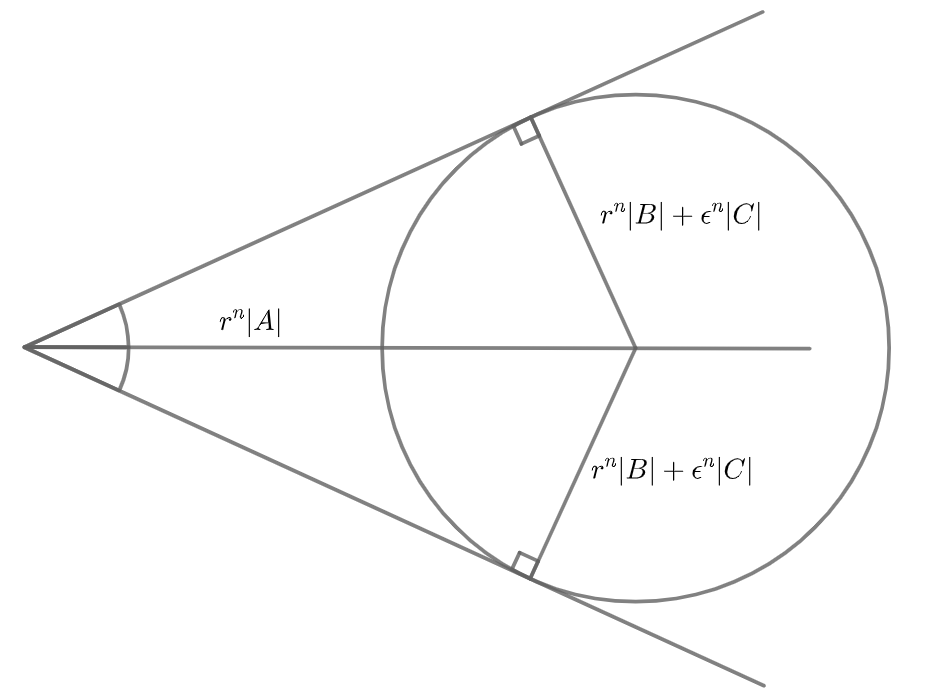}
    \caption{A figure displaying a circle in which $Z_{-n}$ must fall. We use this to approximate the argument of $Z_{-n}$.}
    \label{fig:angle-bound}
    \end{figure}
    Figure \ref{fig:angle-bound} shows that the difference in angle between $Z_{-n}$ and $Ae^{in\theta}$ is at most $\arcsin \left(\frac{\abs{B} + \abs{C}(\epsilon/r)^n}{\abs{A}}\right)$, which decreases monotonically in $n$ to
    $$ \arcsin \frac{\abs{B}}{\abs{A}} ~\approx~ 0.1204 \quad \text{(to 4 decimal places).}$$ 
    For $n \geq 9$, we have
    $$ \arcsin \left(\frac{\abs{B} + \abs{C}(\epsilon/r)^n}{\abs{A}} \right) ~\leq~ 0.121.$$
    Hence, for $n,m \geq 9$, we have
    \begin{equation} \label{eq:spiraliness-bound}
    \arg(Z_{-m}) - \arg(Z_{-n}) \in ((m-n)\theta -0.242, (m-n)\theta + 0.242).
    \end{equation}
    Consider for $n \geq 9$, the numbers $Z_{-n}, Z_{-(n+1)}, \dots, Z_{-(n+8)}$. To show that every cone with semiangle $\pi/3$ contains one of these numbers, we show that the angle between every two adjacent lines in Figure \ref{fig:nine-vecs} is less than $2\pi/3$ .
    
    \begin{figure}[H]
    \centering
    \includegraphics[width=0.5\linewidth]{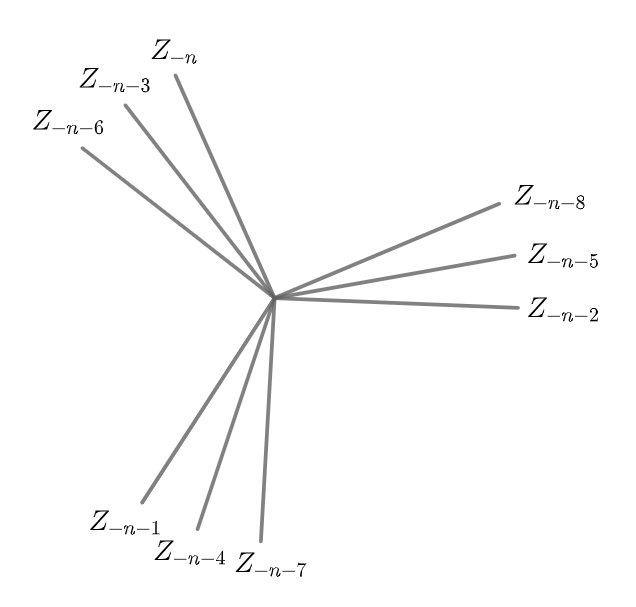}
    \caption{Nine consecutive $Z_{-i}$'s. We aim to show that the angle between each pair of adjacent lines is less than $2\pi/3.$}
    \label{fig:nine-vecs}
    \end{figure}
    By \eqref{eq:spiraliness-bound}, we have
    \begin{align*}
    \arg(Z_{-(n+6)}) - \arg(Z_{-n}) &\in (0.2490, 0.7331) \subseteq (0,2\pi/3) \\
    \arg(Z_{-(n+1)}) - \arg(Z_{-(n+6)}) &\in (1.4432, 1.9273) \subseteq (0,2\pi/3) \\
    \arg(Z_{-(n+7)}) - \arg(Z_{-(n+1)}) &\in (0.2490, 0.7331) \subseteq (0,2\pi/3) \\
    \arg(Z_{-(n+2)}) - \arg(Z_{-(n+7)}) &\in (1.4432, 1.9273) \subseteq (0,2\pi/3) \\
    \arg(Z_{-(n+8)}) - \arg(Z_{-(n+2)}) &\in (0.2490, 0.7331) \subseteq (0,2\pi/3) \\
    \arg(Z_{-n}) - \arg(Z_{-(n+8)}) &\in (1.1976, 1.6817) \subseteq (0,2\pi/3).
    \end{align*}
    This completes the proof of \eqref{statement:spiraliness}.\\
    \\
    Proof of \eqref{statement:close-fib}: By the triangle inequality, we have
    $$ |A|r^n - |B|r^n - |C| \epsilon^n ~\leq~ \|\ve{X}_{-n}\|_2 ~\leq~ |A|r^n + |B|r^n + |C|\epsilon^n. $$
    This shows that $\|\ve{X}_{-n}\|_2$ is increasing for $n>1$.\\
    \\
    For $\ve{v}$ with $\|\ve{v}\|_2 \geq 2(|A|r^{P+M-1} + |B|r^{P+M-1} + |C| \epsilon^{P+M-1}) = 682496$, let $X_{-n}$ be the smallest 3-bonacci vector such that the upper bound of its norm is greater than $\frac{1}{2}\norm{\ve{v}}_2$. Consider some 3-bonacci vector $\ve{X}_{-n'}$ (with $N \leq n-P \leq n' \leq n-1$) that lies in the cone with axis passing through $\ve{v}$ and semiangle $\pi/3$. We have
    \begin{align*}
    \|\ve{X}_{-n'}\|_2 &~\geq~ |A|r^{n'} - |B|r^{n'} - |C| \epsilon^{n'} \\
    &~\geq~ \frac{|A|r^{n'} - |B|r^{n'} - |C| \epsilon^{n'}}{|A|r^n + |B|r^n + |C| \epsilon^n} \frac{1}{2}\norm{\ve{v}}_2 \\
    &~\geq~ \frac{|A|r^{n-P} - |B|r^{n-P} - |C| \epsilon^{n-P}}{|A|r^n + |B|r^n + |C| \epsilon^n} \frac{1}{2}\norm{\ve{v}}_2 \\
    &~\geq~ \frac{|A|r^{M} - |B|r^{M} - |C| \epsilon^{M}}{|A|r^{M+P} + |B|r^{M+P} + |C| \epsilon^{M+P}}\frac{1}{2} \norm{\ve{v}}_2,
    \end{align*}
    since the fraction in the penultimate line is increasing in $n$ and $n \geq M+P$. Plugging in our values of $A,B,C,r,\epsilon$ and rounding down, we have
    $$ \|\ve{X}_{-n'}\|_2 ~\geq~ \frac{1}{2} \gamma \norm{\ve{v}}_2, $$
    where $\gamma = 7.714 \times 10^{-4}$. This shows that the region $E$ in Figure \ref{fig:guaranteed-region} contains a 3-bonacci vector. We therefore bound the distance from $\ve{v}$ to $\ve{X}_{-n'}$ by the distance from $\ve{v}$ to the furthest point from $\ve{v}$ in $E$. Hence, we have that
    \begin{align*}
    \|\ve{v}-\ve{X}_{-n'}\|_2 &~\leq~ \sqrt{\left(\frac{\sqrt{3}}{2}\norm{\ve{v}}_2\right)^2 + \left( \frac{1}{2} (1-\gamma) \norm{\ve{v}}_2^2 \right)} \\
    &~\leq~ \frac{1}{2}\sqrt{3+(1-\gamma)^2} \norm{\ve{v}}_2 \\
    &~=~ \lambda \norm{\ve{v}}_2,
    \end{align*}
    where $\lambda = 0.9998073$. 

    \begin{figure}[H]
    \centering
    \includegraphics[width=0.5\linewidth]{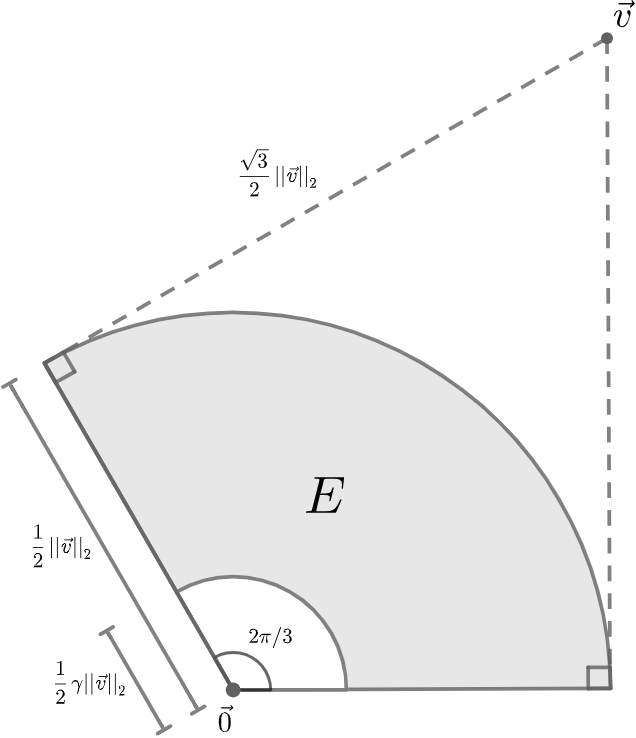}
    \caption{Figure showing a region $E$ which is guaranteed to contain a 3-bonacci vector.}
    \label{fig:guaranteed-region}
    \end{figure}
    
    Proof of \eqref{statement:log-bound}: The large step algorithm gives a decomposition
    \begin{equation}
    \ve{v} ~=~ \sum_{i=1}^{N} \ve{X}_{-n_i}.
    \end{equation}
    To prove that $N$ is finite, from the previous statement we obtain
    \begin{equation}\label{eq:algoineqs}
    \norm{\ve{v}_i}_2 ~\leq~ \lambda \norm{\ve{v}_{i-1}}_2 ~\leq~ \cdots ~\leq~ \lambda^{i-1} \norm{\ve{v}}_2.
    \end{equation}
    Now, for $k=3$, at most the last two terms are from the case where no largest step exists, so $\ve{v}_{N-1}$ has norm at least 1, and none of the $\ve{v}_i$'s for $i<N-1$ are $(-1,0)$ or $(0,-1)$. Therefore, $1 \leq \lambda^{N-2} \norm{\ve{v}}_2$, so we obtain the bound
    \begin{equation}\label{eq:Nbound}
    N ~\leq~ \frac{\log(\norm{\ve{v}}_2)}{\log( 1/\lambda)} + 2.
    \end{equation}
    By Lemma \ref{lem:inductive-termbounding}, we have
    \begin{equation}
    j_{lsb} ~\leq~ 3(N-1) + \max_{i \in \{0,1, \dots, N+1\} } n_i.
    \end{equation}
    Finally, we bound the possible values of $n_i$ that can appear in the decomposition of $\ve{v}$. Statement \eqref{statement:close-fib} ensures that $\|\ve{X}_{-n_i}\|_2 \leq 2 \norm{\ve{v}}_2$, and we know by the triangle inequality that $\|\ve{X}_{-{n_i}}\|_2 \geq (|A|-|B|)r^{n_i} - |C| \epsilon^{n_i} \geq (|A|-|B|)r^{n_i} - |C| \epsilon$. Hence, $(|A|-|B|)r^{n_i} \leq 2 \norm{\ve{v}}_2 + |C| \epsilon$, so
    \begin{equation}
    n_i ~\leq~ \frac{\log \left(\frac{2 \norm{\ve{v}}_2+|C|\epsilon}{|A|-|B|}\right)}{\log r}.
    \end{equation}
    Putting this all together, we have
    \begin{align*}
    j_{lsb} &~\leq~ 3\left(\frac{\log(\norm{\ve{v}}_2)}{\log( 1/\lambda)} + 1 \right) + \frac{\log \left(\frac{2 \norm{\ve{v}}_2+|C|\epsilon}{|A|-|B|}\right)}{\log r} \\
    &~\leq~ 3\left(\frac{\log(\norm{\ve{v}}_2)}{\log( 1/\lambda)} + 1 \right) + \frac{\log \left(\frac{2 +|C|\epsilon}{|A|-|B|}\norm{\ve{v}}_2\right)}{\log r} \\
    &~=~ \left( \frac{3}{\log(1/\lambda)} + \frac{1}{\log r}\right) \log(\norm{\ve{v}}_2) + \left( 3 + \frac{\log(2+|C|\epsilon) - \log(|A|-|B|)}{\log r} \right).
    \end{align*}
    Plugging values in for $\lambda, r, A, B,$ and $C$, we have $c_3 \approx 15570$ and $d_3 \approx 5.018$, rounded up to 4 significant figures.
\end{proof}

This proof not only shows the existence of constants $c_3$ and $d_3$, but also gives us their values. However, the value of $c_3$ exhibited in this proof is too large for practical use. A computation over all vectors $\ve{v}$ with $\norm{\ve{v}}_\infty \leq 100$ suggests that $c_3 = 15,d=10$ is sufficient. We also support this with a scatter plot for 1000 randomly generated vectors with $L^\infty$ norm at most 10000. \\
\begin{figure}[H]
    \centering
    \includegraphics[width=0.7\linewidth]{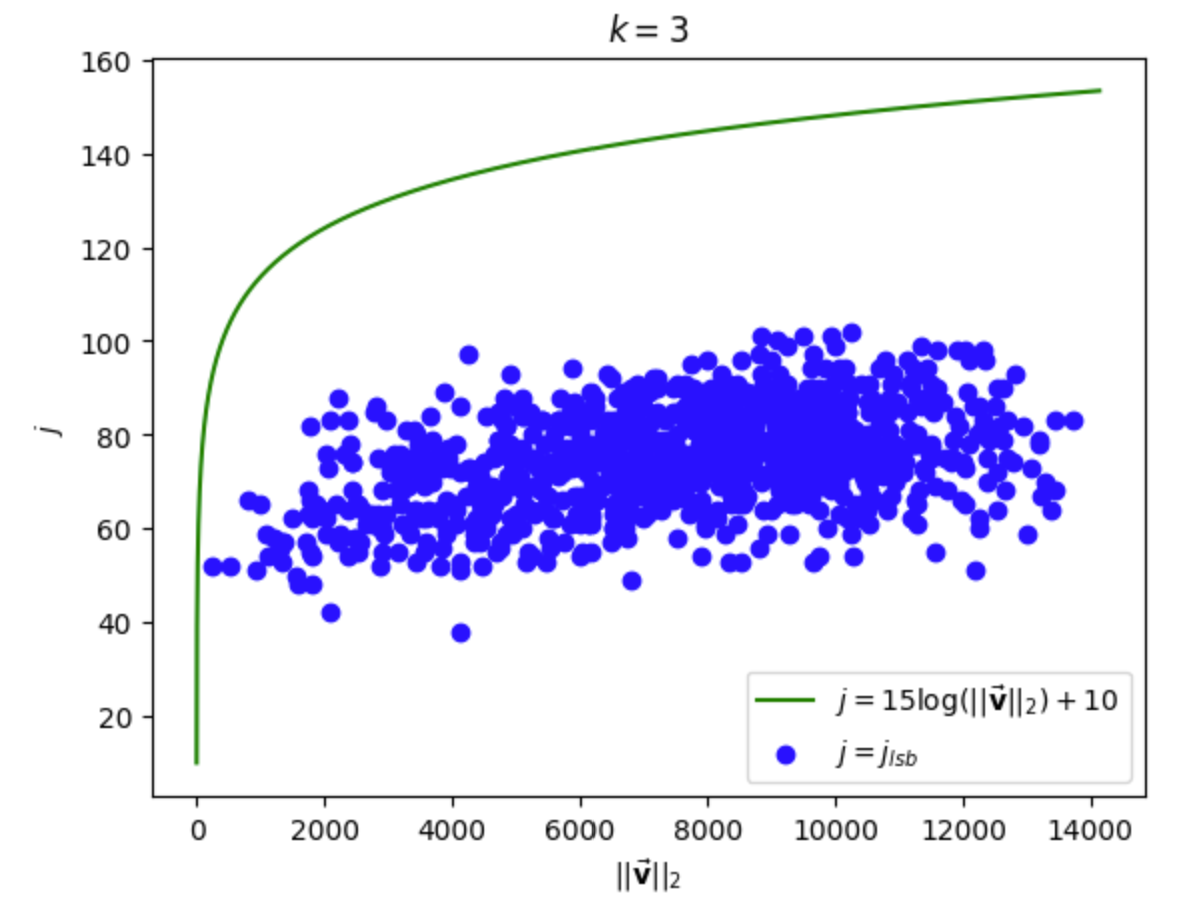}
    \caption{Scatter plot of $j_{lsb}$ and the proposed upper bound with 1000 randomly generated vectors}.
    \label{fig:scatter-j-bound}
\end{figure}

We give the time-complexity of the algorithm in the next proposition.

\begin{proposition} \label{prop:complexity-large-steps}
    For fixed $k$, the time complexity of the large step algorithm is $O(\norm{\ve{v}}_\infty)$.
\end{proposition}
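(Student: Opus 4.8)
The plan is to break the large step algorithm (Definition~\ref{def:michael-algo}) into its four stages, bound each, and add them up. Throughout we fix $k$, so any quantity depending only on $k$ is $O(1)$, and --- as in Lemma~\ref{lem:old-algo-complexity} and Proposition~\ref{prop:complexity-small-steps} --- we count each arithmetic operation on the integers involved as $O(1)$. The stages are: (i) the large-step loop, producing $\ve{v}=\ve{v}_1,\dots,\ve{v}_M$ with $\ve{v}_{i+1}=\ve{v}_i-\ve{X}_{-n_i}$ and $\ve{X}_{-n_i}$ a closest $k$-bonacci vector to $\ve{v}_i$; (ii) running the small steps algorithm on $\ve{v}_M$ to get $\ve{v}_M=\sum_{i=M}^N\ve{X}_{-n_i}$ with $n_i\in\{1,\dots,k\}$; (iii) computing $j_{lsb}=k(N-1)+\max_i n_i$ by Lemma~\ref{lem:inductive-termbounding}; and (iv) the vector Zeckendorf greedy algorithm with $j=j_{lsb}$. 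Stage~(iv) fixes the final bound: it builds $x_2,\dots,x_{j_{lsb}+1}$ from the recurrence ($O(k\,j_{lsb})$ operations), forms $S_{j_{lsb}+1}(\ve{v})$ ($O(k)$ more), and runs the $k$-bonacci number greedy algorithm on an integer below $x_{j_{lsb}+1}$, whose greedy pointer moves monotonically down through $2,\dots,j_{lsb}+1$ ($O(j_{lsb})$ operations). Thus stage~(iv) costs $O(j_{lsb})$, and the task reduces to proving $j_{lsb}=O(\norm{\ve{v}}_\infty)$ and that the loop in stage~(i) runs in $O(\norm{\ve{v}}_\infty)$ time.

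The crux is the bound $M=O(\norm{\ve{v}}_\infty)$ on the number of large steps, which controls both the loop length and, via $N=M+O(1)$, the term count in $j_{lsb}$. First I would show the stopping vector $\ve{v}_M$ satisfies $\norm{\ve{v}_M}_\infty\le (k-1)/2$: the stopping condition means no $k$-bonacci vector lies strictly closer to $\ve{v}_M$ than $\ve{0}$, so subtracting a basis vector $\ve{X}_{-\ell}=\ve{e}_\ell$ (any $\ell\in\{1,\dots,k-1\}$) or $\ve{X}_{-k}=(-1,\dots,-1)$ cannot decrease $\norm{\ve{v}_M}_2$, and expanding these inequalities forces every coordinate of $\ve{v}_M$ into $[-(k-1)/2,0]$. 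By Proposition~\ref{prop:complexity-small-steps} stage~(ii) then costs $O(k^3\norm{\ve{v}_M}_\infty)=O(1)$ and adds only $O(1)$ terms, so $N=M+O(1)$. To bound $M$ itself, I would set up a monovariant on $\norm{\ve{v}_i}_2$: while $\norm{\ve{v}_i}_\infty$ exceeds a threshold $t_k$, one of $\ve{e}_1,\dots,\ve{e}_{k-1},\ve{X}_{-k}$ already decreases $\norm{\ve{v}_i}_2^2$ by $\Omega_k(\norm{\ve{v}_i}_\infty)$ --- a short case split on whether the largest positive coordinate or the most negative coordinate is comparable to $\norm{\ve{v}_i}_\infty$ --- and since the algorithm takes the globally closest vector it does at least this well. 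For $\norm{\ve{v}_i}_2$ large, an $\Omega_k(\norm{\ve{v}_i}_\infty)$ drop in the squared norm is an $\Omega_k(1)$ drop in the norm, so only $O(\norm{\ve{v}_1}_2)=O(\norm{\ve{v}}_\infty)$ such steps occur; below the threshold $\norm{\ve{v}_i}_2^2$ is a bounded, strictly decreasing nonnegative integer, so only $O(1)$ more occur. Hence $M=O(\norm{\ve{v}}_\infty)$.

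It remains to bound $\max_i n_i$ and the cost of one large step, both coming from the growth of $\norm{\ve{X}_{-n}}_2$. Applying the generalized Binet formula (Lemma~\ref{lem:binet}) to the backwards recurrence for the $\ve{X}_{-n}$ --- equivalently, to the reciprocal characteristic polynomial, whose roots are the $\lambda_j^{-1}$ and which is dominated by a root of modulus $r\coloneq\abs{\lambda_k}^{-1}>1$ --- one obtains that $\norm{\ve{X}_{-n}}_2$ grows geometrically in $n$ (the argument is the one used for $k=3$ in the proof of Theorem~\ref{thm:large-steps-j-bound-k=3}, checking the leading coefficient does not vanish). Consequently $\norm{\ve{X}_{-n}}_2$ is eventually increasing and only $O(\log\norm{\ve{v}_i}_\infty)$ indices $n$ satisfy $\norm{\ve{X}_{-n}}_2\le 2\norm{\ve{v}_i}_2$; any closest vector to $\ve{v}_i$ is among these, since every $\ve{X}_{-n}$ of larger norm is farther from $\ve{v}_i$ than the short step $\ve{X}_{-k}$. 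Since every $\ve{X}_{-n_i}$ in the decomposition equals $\ve{v}_i-\ve{v}_{i+1}$ it has norm $\le 2\norm{\ve{v}}_2$, so $\max_i n_i=O(\log\norm{\ve{v}}_\infty)$ and therefore $j_{lsb}=k(N-1)+\max_i n_i=O(\norm{\ve{v}}_\infty)$, closing stage~(iv) at $O(\norm{\ve{v}}_\infty)$.

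The step I expect to be delicate is stage~(i)'s loop: a naive search examines all $O(\log\norm{\ve{v}_i}_\infty)$ candidates with $\norm{\ve{X}_{-n}}_2\le 2\norm{\ve{v}_i}_2$, and with $M=\Theta(\norm{\ve{v}}_\infty)$ steps this only yields $O(\norm{\ve{v}}_\infty\log\norm{\ve{v}}_\infty)$. To remove the logarithm one must show each step needs only $O(1)$ candidates, and the right tool is again the spiral structure: as in the proof of Theorem~\ref{thm:large-steps-j-bound-k=3} for $k=3$, the $\ve{X}_{-n}$ spiral outward so that a closest vector to $\ve{v}_i$ is confined to $O(1)$ consecutive indices near $n\approx\log_r\norm{\ve{v}_i}_2$ --- for $k=3$ this is essentially statement~(1) in that proof (and there one even gets the stronger $M=O(\log\norm{\ve{v}}_\infty)$ from~\eqref{eq:Nbound}), while for general $k$ the analogous statement would be read off from the vector form of Lemma~\ref{lem:binet}. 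With the ``$O(1)$ candidates per step'' bound in place, stage~(i) costs $O(\norm{\ve{v}}_\infty)$, and summing the four stages gives time complexity $O(\norm{\ve{v}}_\infty)$ for fixed $k$.
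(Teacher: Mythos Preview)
Your decomposition into stages is correct, but the paper's argument is considerably simpler and goes through a different bound on the loop length. The paper does not establish $M=O(\norm{\ve{v}}_\infty)$ via a monovariant; instead it invokes the logarithmic bound $j_{lsb}=O(\log\norm{\ve{v}}_\infty)$ (Theorem~\ref{thm:large-steps-j-bound-k=3} for $k=3$, Conjecture~\ref{conj:large-steps-j-bound} in general), which in particular forces the number $M$ of large steps to be $O(\log\norm{\ve{v}}_\infty)$. With that in hand, each iteration of the loop is allowed the naive $O(\log\norm{\ve{v}}_2)$ search over all $k$-bonacci vectors of norm at most $2\norm{\ve{v}}_2$, and the whole loop costs $O((\log\norm{\ve{v}}_\infty)^2)$. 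The linear term $O(\norm{\ve{v}}_\infty)$ then comes entirely from the vector Zeckendorf greedy step, specifically the reduction modulo $x_{j+1}$ in evaluating $S_{j+1}(\ve{v})$ (as in the proof of Proposition~\ref{prop:complexity-small-steps}); you gloss over that modulo cost when you say stage~(iv) costs $O(j_{lsb})$.

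The difficulty you flag in your last paragraph is therefore an artifact of your route: by proving only $M=O(\norm{\ve{v}}_\infty)$ you are forced to shave the per-step search down to $O(1)$ candidates, and your proposed fix (a uniform ``spiral'' statement for general $k$) is essentially as hard as the logarithmic bound you set out to avoid. The paper simply accepts the dependence on Theorem~\ref{thm:large-steps-j-bound-k=3}/Conjecture~\ref{conj:large-steps-j-bound} up front, which makes the stage-(i) analysis trivial. Your monovariant argument that $\norm{\ve{v}_i}_2^2$ drops by $\Omega_k(\norm{\ve{v}_i}_\infty)$ and the resulting bound $\norm{\ve{v}_M}_\infty\le(k-1)/2$ are correct and of independent interest, but they are not needed for the paper's proof.
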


We prove Proposition \ref{prop:complexity-large-steps} in Appendix \ref{appendix:complexity}.

\subsection{Relative Efficiency of Algorithms for Finding SRs}

In summary, we have two methods to find a value of $j$ for the vector Zeckendorf greedy algorithm. For each fixed $k$, $j_{ssb} = O(\norm{\ve{v}}_\infty)$, but assuming Conjecture \ref{conj:large-steps-j-bound}, we have $j_{lsb}=O(\log\norm{\ve{v}}_\infty)$. Considering $k$ fixed, both methods have the same time-complexity, but the slowest part of each method is the greedy algorithm. We gain more insight into the running time by considering how long it takes to calculate $j_{ssb}$ and $j_{lsb}$. The proofs in Appendix \ref{appendix:complexity} show that the calculation of $j_{ssb}$ takes $O(1)$ operations, and that the calculation of $j_{lsb}$ takes $O((\log \norm{\ve{v}}_\infty)^2)$ operations for fixed $k$.


\section{Properties of Vector Representations}\label{sec:gaps}

\subsection{Overview}

The one-dimensional $k$-Zeckendorf representation possesses many interesting properties. The number of summands in decompositions of integers in $[x_n,x_{n+1})$ converges to a Gaussian distribution, and the distribution of gaps between summands follows geometric decay. Furthermore, the decompositions exhibit summand minimality; that is, there is no way to express any nonnegative integer $n$ as a linear combination of $k$-bonacci numbers with nonnegative integer coefficients using strictly fewer terms than the $k$-Zeckendorf representation. 

We show that vector Zeckendorf representations have extremely similar properties (Theorems \ref{thm: number_of_summands}, \ref{thm:avegapsbulk}, and \ref{thm: vec_sum_minimality}). A useful strategy is to reduce the problem to the one-dimensional case by considering appropriate bijections between subsets of $\Z^{k-1}$ and subsets of $\Z_{\ge 0}$. An example of such a map is $S_n$ in Definition \ref{defn: s_n}. The following map, first observed by Anderson and Bicknell-Johnson \cite{anderson2011multidimensional}, also provides valuable insight.

\begin{remark}\label{rem:vector-integer-bijection}
    The satisfying sequences $\{c_i \}$ in Theorem \ref{thm:vector-rep} are essentially the same as $k$-Zeckendorf representations for non-negative integers (as in Theorem \ref{thm:kbonacci-zeckendorf}). This gives a bijection between $\Z^{k-1}$ and $\Z_{\ge 0}$.
\end{remark}

Recall that $D_n =  \left\{ \vec{\mathbf{v}}\in\mathbb Z^{k-1}: J(\ve{v})\le n  \right\}$, as given in Definition \ref{defn: d_n}. Then, the following result is a more precise statement of Remark \ref{rem:vector-integer-bijection}.

\begin{lemma}\label{Lemma: simple_bijection}
    Define $f:\Z^{k-1} \to \Z_{\ge 0}$ as follows. For any $\ve{v}\in \Z^{k-1}$, let $\ve{v} = \sum_{i\ge 1} a_i \ve{X}_{-i}$ be its (unique) SR. Let
    \begin{equation}\label{eq:simple_bijection}
    f(\ve{v}) \ \coloneq\  \sum_{i\ge 2} a_{i-1} x_{i}.
    \end{equation}
    Then $f$ is a bijection. For $n>0$, the image of $D_n$ under $f$ is $[0,x_{n+2})\cap \Z$.
\end{lemma}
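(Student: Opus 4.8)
The plan is to recognize $f$ as a composition of bijections and then read off the image of $D_n$ from the definition of $J$ together with a standard count.

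First I would check that $f$ is a bijection. By Theorem~\ref{thm:vector-rep}, the map $\ve{v}\mapsto (a_i)_{i\ge1}$ taking a vector to the coefficient sequence of its SR is a bijection from $\Z^{k-1}$ onto the set $\mathcal{S}$ of finitely supported $\{0,1\}$-sequences $(a_i)_{i\ge1}$ with no $k$ consecutive $1$'s. The shift $(a_i)_{i\ge1}\mapsto(c_i)_{i\ge2}$ with $c_i=a_{i-1}$ is a bijection from $\mathcal{S}$ onto the set of finitely supported $\{0,1\}$-sequences $(c_i)_{i\ge2}$ with no $k$ consecutive $1$'s, since that constraint is translation invariant and involves no boundary effect under the shift. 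Finally, by Theorem~\ref{thm:kbonacci-zeckendorf}, $(c_i)_{i\ge2}\mapsto\sum_{i\ge2}c_ix_i$ is a bijection from the latter set onto $\Z_{\ge0}$. Composing these three maps gives precisely $f$; equivalently, injectivity and surjectivity of $f$ are immediate from the uniqueness assertions in Theorems~\ref{thm:vector-rep} and~\ref{thm:kbonacci-zeckendorf}.

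Next I would describe $f(D_n)$. Writing the SR of $\ve{v}$ as $\sum_{i\ge1}a_i\ve{X}_{-i}$, the definition of $J$ gives $\ve{v}\in D_n \iff a_i=0$ for all $i>n$, which under the shift says the $k$-Zeckendorf representation $f(\ve{v})=\sum_{i\ge2}a_{i-1}x_i$ uses only the terms $x_2,\dots,x_{n+1}$. Hence $f(D_n)$ is exactly the set of nonnegative integers whose $k$-Zeckendorf representation involves no $x_\ell$ with $\ell\ge n+2$. To see this equals $[0,x_{n+2})\cap\Z$, one inclusion is easy: if $0\le m<x_{n+2}$ and some $x_\ell$ with $\ell\ge n+2$ appeared, then $m\ge x_\ell\ge x_{n+2}$ (the $k$-bonacci sequence being nondecreasing), a contradiction, so $[0,x_{n+2})\cap\Z\subseteq f(D_n)$. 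For the reverse inclusion I would count: by the uniqueness half of Theorem~\ref{thm:kbonacci-zeckendorf}, $(c_2,\dots,c_{n+1})\mapsto\sum_{i=2}^{n+1}c_ix_i$ is a bijection from the $\{0,1\}$-strings of length $n$ with no $k$ consecutive $1$'s onto $f(D_n)$, and the number of such strings satisfies the $k$-bonacci recurrence with matching initial conditions (condition on the length of the initial run of $1$'s), so $|f(D_n)|=x_{n+2}=|[0,x_{n+2})\cap\Z|$; combined with the inclusion just shown, the two sets coincide.

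The reindexing bijection and the first inclusion are routine bookkeeping; the one step requiring genuine care is the enumeration of $\{0,1\}$-strings of length $n$ with no $k$ consecutive $1$'s, where one must verify it satisfies $t_n=t_{n-1}+\dots+t_{n-k}$ for $n\ge k$ with $t_j=2^j$ for $0\le j\le k-1$ and that this matches $x_{n+2}$. An alternative that sidesteps the count is to prove directly that the largest integer with $k$-Zeckendorf representation supported on $x_2,\dots,x_{n+1}$ equals $x_{n+2}-1$ --- e.g.\ by running the greedy algorithm on $x_{n+2}-1$, or by a short induction on $n$ --- and this computation is the main obstacle either way.
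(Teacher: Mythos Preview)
Your proof is correct and follows essentially the same approach as the paper: bijectivity via the uniqueness assertions in Theorems~\ref{thm:kbonacci-zeckendorf} and~\ref{thm:vector-rep}, and identification of $f(D_n)$ with the integers whose $k$-Zeckendorf representation is supported on $\{x_2,\dots,x_{n+1}\}$. The paper simply asserts that this latter set equals $[0,x_{n+2})\cap\Z$, whereas you supply the (standard) verification via the inclusion-plus-counting argument; your added detail is sound and your alternative via the greedy algorithm would also work.
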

\begin{proof}
    Bijectivity follows from Theorem \ref{thm:kbonacci-zeckendorf} and Theorem \ref{thm:vector-rep}. For the final statement, note that the non-negative integers with a $k$-Zeckendorf decomposition of the form $\sum_{i= 2}^{n+1} c_i x_i$ are exactly the ones in $[0,x_{n+2})$.
\end{proof}

The $k$-bonacci numbers belong to the class of \textbf{\textit{Positive Linear Recurrence Requences}}, which have been studied extensively (see \cite{beckwith2013average,bower2015gaps,kologlu2011summands,millerwang2012CLT}). For these sequences, the properties of decompositions are well-understood.

\begin{definition}\label{defn:PLRSdef} \cite{bower2015gaps} We say a sequence $\{H_n\}_{n=1}^\infty$ of positive integers is a \textbf{Positive Linear Recurrence Sequence (PLRS)} if the following properties hold.

\begin{enumerate}

\item \emph{Recurrence relation:} There are non-negative integers $L, c_1, \dots, c_L$ such that $$H_{n+1} \ = \ c_1 H_n + \cdots + c_L H_{n+1-L},$$ with $L, c_1$ and $c_L$ positive.

\item \emph{Initial conditions:} $H_1 = 1$, and for $1 \le n < L$ we have
$$H_{n+1} \ =\
c_1 H_n + c_2 H_{n-1} + \cdots + c_n H_{1}+1.$$

\end{enumerate}

We call a decomposition $\sum_{i=1}^{m} {a_i H_{m+1-i}}$ of a positive integer $N$ (and the sequence $\{a_i\}_{i=1}^{m}$) \textbf{legal} if $a_1>0$, the other $a_i \ge 0$, and one of the following two conditions holds:

\begin{itemize}

\item We have $m<L$ and $a_i=c_i$ for $1\le i\le m$.

\item There exists $s\in\{0,\dots, L\}$ such that
\begin{equation}\label{scon}
a_1\ =\ c_1,\ \ \ a_2\ =\ c_2,\ \ \ \cdots,\ \ \ a_{s-1}\ = \ c_{s-1}\ \ \ {\rm{and}}\  \ \ a_s\ < \ c_s,
\end{equation} $a_{s+1}, \dots, a_{s+\ell} = 0$ for some $\ell \ge 0$,
and $\{b_i\}_{i=1}^{m-s-\ell}$ (with $b_i = a_{s+\ell+i}$) is legal.
\end{itemize}

If $\sum_{i=1}^{m} {a_i H_{m+1-i}}$ is a legal decomposition of $N$, we define the \textbf{number of summands} (of this decomposition of $N$) to be $a_1 + \cdots + a_m$.
\end{definition}

Informally, a legal decomposition is one where we cannot use the recurrence relation to replace a linear combination of summands with another summand and the coefficient of each summand is appropriately bounded.

\begin{remark}\label{rmk:kbonacciPLRS}
    The $k$-bonacci sequence $\{x_{n+1} \}_{n=1} ^\infty$ is a PLRS with $L=k$ and $c_1 = \dots = c_L = 1$.
\end{remark}

\subsection{Distribution of the Number of Summands}\label{subsection: number_of_summands}
Lemma \ref{Lemma: simple_bijection} relates the number of summands in vector Zeckendorf representations to those in $k$-Zeckendorf representations. Then, it is no surprise that the distribution of the number of summands converges to a Gaussian. 
\begin{proof}[Proof of Theorem \ref{thm: number_of_summands}] Follows from 
\cite[Theorems 2 and 3]{miller2014gaussian}.
\end{proof}
See \cite[Section 4]{millerwang2012CLT} for expressions characterizing the mean of the Gaussian for a general PLRS. For the case of vector Zeckendorf representations, we also propose an alternative way to find both the mean and the variance of the Gaussian by working with binary words.

\begin{definition} \label{def:w_nk}
Let $W_{n,k}$ be the set of binary words $w = w_1 \cdots w_n \in \{0,1\}^n$ such that
\begin{enumerate}
\item $w_1 = 1$ (the first bit is fixed to $1$);
\item $w$ contains no substring of $k$ consecutive $1$’s.
\end{enumerate}
\end{definition}

\begin{lemma} \label{lem:bijection}
For each $n \geq 1$ there exists a bijection
\[
\Phi_n:\ D_n \setminus D_{n-1} \ \longrightarrow\ W_{n,k}
\]
such that, for every $\ve{v} \in D_n \setminus D_{n-1}$ with the SR coefficient vector
\[
\varepsilon(\ve{v})~=~(\varepsilon_n,\varepsilon_{n-1},\dots,\varepsilon_1)\in\{0,1\}^n,
\]
\[
\varepsilon_j ~=~ \begin{cases}
1, &\text{if the $j^{\rm th}$ summand appears in the SR of $\ve{v}$},\\
0, &\text{otherwise},
\end{cases}
\]
we have $\Phi_n(\ve{v}) = w$, where $w_j = \varepsilon_j$ for $1 \leq j \leq n$.
The rules for the SR imply $\varepsilon_n = 1$ and no $k$ consecutive $1$’s appear in $(\varepsilon_n, \dots, \varepsilon_1)$, so $\Phi_n(\ve{v})$ is well-defined and is in $W_{n,k}$.
\end{lemma}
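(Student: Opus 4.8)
The plan is to construct the map $\Phi_n$ explicitly from the SR coefficient vector, check it lands in $W_{n,k}$, and then exhibit an inverse. First I would recall that by Definition \ref{defn: d_n} and Lemma \ref{Lemma: simple_bijection} (or directly from the definition of $J$), a vector $\ve{v}$ lies in $D_n \setminus D_{n-1}$ precisely when its SR $\ve{v} = \sum_{i\geq 1} \varepsilon_i \ve{X}_{-i}$ has $J(\ve{v}) = n$, i.e. $\varepsilon_n = 1$ and $\varepsilon_i = 0$ for all $i > n$. Thus the data of such a $\ve{v}$ is completely captured by the finite string $(\varepsilon_n, \varepsilon_{n-1}, \dots, \varepsilon_1) \in \{0,1\}^n$. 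Reading this string as $w = w_1 w_2 \cdots w_n$ with $w_j = \varepsilon_j$, the defining condition of an SR (no $k$ consecutive $c_i$'s equal to $1$) says $w$ has no run of $k$ consecutive $1$'s, and $J(\ve{v}) = n$ forces $w_1 = \varepsilon_n = 1$. Hence $w \in W_{n,k}$ by Definition \ref{def:w_nk}, so $\Phi_n$ is well-defined as a map into $W_{n,k}$.

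Next I would verify bijectivity. For injectivity: if $\Phi_n(\ve{v}) = \Phi_n(\ve{v}')$ then $\ve{v}$ and $\ve{v}'$ have identical SR coefficient vectors, hence are equal. For surjectivity, given any $w = w_1\cdots w_n \in W_{n,k}$, set $\varepsilon_j = w_j$ for $1 \le j \le n$ and $\varepsilon_i = 0$ for $i > n$, and define $\ve{v} \coloneq \sum_{i=1}^n \varepsilon_i \ve{X}_{-i} \in \Z^{k-1}$. The condition that $w$ has no $k$ consecutive $1$'s ensures $\sum_i \varepsilon_i \ve{X}_{-i}$ is a satisfying representation, so by the uniqueness in Theorem \ref{thm:vector-rep} it must be $SR(\ve{v})$; and $w_1 = 1$ means $\varepsilon_n = 1$, so $J(\ve{v}) = n$, i.e. $\ve{v} \in D_n \setminus D_{n-1}$. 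Then $\Phi_n(\ve{v}) = w$ by construction. This gives a two-sided inverse, completing the proof.

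There is essentially no hard step here — the lemma is a bookkeeping identification of two descriptions of the same finite objects — but the one point to be careful about is the indexing convention: the coefficient vector is written with decreasing index $(\varepsilon_n, \dots, \varepsilon_1)$ while the word is written with increasing index $w_1 \cdots w_n$, and one must confirm that the reversal is applied consistently so that "$w_1 = 1$'' genuinely corresponds to "$\varepsilon_n = 1$,'' i.e. to $J(\ve{v}) = n$, and that "no $k$ consecutive $1$'s'' is a reversal-invariant condition (which it is). I would state the correspondence as $w_j = \varepsilon_j$ exactly as in the lemma statement and note that reading a string forwards or backwards does not change whether it contains a run of $k$ ones.
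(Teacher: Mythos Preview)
Your proof is correct and follows essentially the same approach as the paper: verify the map lands in $W_{n,k}$ using the SR constraints, argue injectivity from uniqueness of the SR (Theorem \ref{thm:vector-rep}), and obtain surjectivity by reading a word back as a coefficient sequence. One small point: you write $w_j = \varepsilon_j$ and then $w_1 = \varepsilon_n$, which are inconsistent as stated; the lemma's own indexing is admittedly ambiguous here, but since (as you note) the no-$k$-consecutive-$1$'s condition is reversal-invariant, this does not affect the argument.
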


\begin{proof}
By definition of $D_n \setminus D_{n-1}$ and SR, each $\ve{v}$ has a unique coefficient vector $\varepsilon (\ve{v}) \in \{0,1\}^n$ indicating the presence or absence of each summand up to index $n$, with $\varepsilon_n = 1$.

The rules for the SR correspond to forbidding $k$ consecutive $1$’s among the coefficients. Mapping coefficients to bits produces a well-defined $w\in W_{n,k}$.

The map is injective by the uniqueness of the SR, and surjective by reversing the construction: read a word $w\in W_{n,k}$ as a valid coefficient vector, then form the SR.
\end{proof}

\begin{definition}\label{def:Xn}
For $n\ge 1$, we define the uniform probability measure on $W_{n,k}$:
\[
\Prob^{(W)}_n(A) \ = \ \frac{|A|}{|W_{n,k}|}, \qquad A \subseteq W_{n,k}.
\]
Define the random variable $X_n:W_{n,k}\to \mathbb{N}$ as
\[
X_n(w) \ = \ \text{the number of 1's in } w, \qquad w\in W_{n,k}.
\]
Using the bijection $\Phi_n$ from Lemma \ref{lem:bijection} and the uniform measure $\Prob_n$ on $D_n \setminus D_{n-1}$ from Definition \ref{defn: d_n}, the pushforward measure $(\Phi_n)_*\Prob_n$ equals $\Prob^{(W)}_n$. In particular, $X_n$ and $K_n$ have the same distribution:
\[
\Prob^{(W)}_n\bigl(X_n \ = \ t\bigr) \ = \ \Prob_n\bigl(K_n \ = \ t\bigr).
\]
\end{definition}

Counting binary words that avoid a pattern of $k$ consecutive $1$'s (also known as the substring $1^k$) is a classical problem in combinatorics (see, for example, \cite[Ch. I]{FlajoletSedgewick2009}); our case only differs slightly in that the first bit is fixed to be $1$.


We state a generating function for binary words that avoid $1^k$, in which $x$ marks the length and $y$ marks the number of $1$'s. We write
\[
F_k(x, y) \ = \ \sum_{n \ge 0} \ \sum_{m \ge 0} a_{n,m} \, x^n y^m,
\]
where $a_{n,m}$ counts binary words of length $n$ with exactly $m$ total $1$'s and with no substring $1^k$.

\begin{proposition}[From {\cite[Prop. 1]{BarilKirgizovVajnovszki2022}}]\label{prop:bkv}
For every fixed $k\ge 2$, the generating function for binary words avoiding $1^{k}$ is
\[
F_k(x,y) 
\ = \ \frac{ y \,\bigl( 1 \ - \ (x y)^{k} \bigr) }{\, y \ - \ x y^{2} \ - \ x y \ + \ (x y)^{k+1} \,}.
\]
Here $x$ marks each letter and $y$ marks each occurrence of the letter $1$.
\end{proposition}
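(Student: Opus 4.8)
The plan is to derive the generating function $F_k(x,y)$ by a standard symbolic-method decomposition of binary words avoiding the block $1^k$, tracking both the length (marked by $x$) and the number of $1$'s (marked by $y$), and then to simplify the resulting rational expression into the stated closed form. First I would describe the language: a binary word avoids $1^k$ iff every maximal run of $1$'s has length at most $k-1$. Such a word can be uniquely parsed as an alternating sequence of maximal $0$-runs and maximal $1$-runs; equivalently, group each $1$-run together with a following single $0$ (or, at the very end of the word, possibly no $0$). This gives the combinatorial specification that a word avoiding $1^k$ is a sequence of "blocks" of the form $1^j 0$ with $0 \le j \le k-1$, optionally preceded by an initial string of $0$'s and optionally with a trailing $1^j$ with $0 \le j \le k-1$ in place of the last block's terminal $0$. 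The generating function for a single $1$-run of length at most $k-1$ is $\sum_{j=0}^{k-1}(xy)^j = \frac{1-(xy)^k}{1-xy}$, and appending a $0$ multiplies by $x$.

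Next I would assemble these pieces. Writing $R(x,y) = \frac{1-(xy)^k}{1-xy}$ for the generating function of a $1$-run of bounded length, the class of all binary words avoiding $1^k$ has generating function
\[
F_k(x,y) \ = \ \frac{1}{1-x}\cdot \frac{1}{\,1 - x\,R(x,y)\,}\cdot\bigl(1 + (\text{correction for the tail})\bigr),
\]
where $\frac{1}{1-x}$ accounts for the leading block of $0$'s and $\frac{1}{1-xR(x,y)}$ for the repeated blocks $1^{\le k-1}0$; one must handle the endpoint carefully so as not to force a terminal $0$. A cleaner route is the transfer-matrix / direct recursion: let $A(x,y)$ count words avoiding $1^k$, and condition on the position of the first $0$ (if any). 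A word is either all $1$'s of length $\le k-1$, contributing $R(x,y)$, or it begins with $1^j 0$ for some $0\le j\le k-1$ followed by another such word, contributing $xR(x,y)A(x,y)$. Hence $A = R + xRA$, giving $A = \frac{R}{1-xR}$. I would then substitute $R = \frac{1-(xy)^k}{1-xy}$ and clear denominators.

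The remaining work is purely algebraic: plugging in $R$ yields
\[
F_k(x,y) \ = \ \frac{\dfrac{1-(xy)^k}{1-xy}}{\,1 - x\cdot\dfrac{1-(xy)^k}{1-xy}\,} \ = \ \frac{1-(xy)^k}{\,1 - xy - x\bigl(1-(xy)^k\bigr)\,},
\]
and a short manipulation of the denominator ($1 - xy - x + x(xy)^k$, then multiplying numerator and denominator by $y$ to match the paper's normalization) produces exactly
\[
F_k(x,y) \ = \ \frac{y\bigl(1-(xy)^k\bigr)}{\,y - xy^2 - xy + (xy)^{k+1}\,}.
\]
I expect the main obstacle to be the bookkeeping at the boundary of the word—deciding whether to model words as sequences of $1^{\le k-1}0$ blocks with a special last block or to use the recursion $A = R + xRA$—since an off-by-one error there changes $F_k$ by spurious factors of $x$ or $y$; the recursion formulation is the safest, and I would sanity-check the final formula against small cases (e.g. $k=2$, where $F_2(x,y)$ should enumerate binary words with no two consecutive $1$'s, and against the specialization $y=1$, which must give $\frac{1-(x)^k}{1-2x+x^{k+1}}$, the generating function for the count of all such words). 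Since this matches \cite[Prop.~1]{BarilKirgizovVajnovszki2022}, invoking that reference closes the argument.
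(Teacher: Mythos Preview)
Your derivation is correct: the recursion $A = R + xRA$ (condition on the location of the first $0$) is exactly right, and the algebra that follows is clean. The paper itself does not prove this proposition at all---it is simply imported verbatim from \cite[Prop.~1]{BarilKirgizovVajnovszki2022} and used as a black box, so your self-contained argument actually supplies more than the paper does. If you wish to match the paper, a one-line citation suffices; if you wish to be self-contained, your recursion $A=R/(1-xR)$ with $R=(1-(xy)^k)/(1-xy)$ and the final multiply-through-by-$y$ step are all that is needed, and your proposed sanity checks ($y=1$ giving $(1-x^k)/(1-2x+x^{k+1})$, and the $k=2$ Fibonacci case) are good confirmations.
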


We now pass from the setting of Proposition \ref{prop:bkv} to our model $W_{n,k}$ in Definition~\ref{def:w_nk}, where the first bit is fixed to $1$. Define
\[
F_k^{\mathrm{fix}}(x,y) 
\ = \ \sum_{n\ge 1}\ \sum_{m\ge 1} b_{n,m}\, x^{n} y^{m},
\]
where $b_{n,m}$ counts words in $W_{n,k}$ of length $n$ and with exactly $m$ ones. Thus $F_k^{\mathrm{fix}}(x,y)$ is the generating function for our fixed-first-bit model, while $F_k(x,y)$ is the generating for the unrestricted-first-bit model (which also includes the empty word).

\begin{proposition}\label{prop:fix}
For every fixed $k\ge 2$, the generating function for binary words that avoid $1^k$ and have $1$ fixed as the first bit is
\[
F_k^{\mathrm{fix}}(x,y) 
\ = \ \bigl( 1 \ - \ x \bigr)\, F_k(x,y) \ - \ 1 ,
\]
where $F_k(x,y)$ is the same as in Proposition~\ref{prop:bkv}. Equivalently,
\[
F_k^{\mathrm{fix}}(x,y) 
\ = \ \frac{ \bigl( 1 \ - \ x \bigr)\, y \,\bigl( 1 \ - \ (x y)^{k} \bigr) }{\, y \ - \ x y^{2} \ - \ x y \ + \ (x y)^{k+1} \,} \ - \ 1 .
\]
\end{proposition}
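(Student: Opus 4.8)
The plan is to derive $F_k^{\mathrm{fix}}$ from $F_k$ by a simple inclusion–exclusion on the first bit. Every binary word avoiding $1^k$ is counted by $F_k(x,y)$, including the empty word (which contributes the constant $1$). Such a word is either empty, or starts with $0$, or starts with $1$. First I would observe that the words counted by $F_k^{\mathrm{fix}}(x,y)$ are exactly those of the third type: nonempty words avoiding $1^k$ whose first letter is $1$. So the strategy is to subtract off the contributions of the empty word and of the words starting with $0$, leaving precisely the fixed-first-bit-$1$ words.

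The key computation is to identify the generating function for words counted by $F_k$ that start with the letter $0$. Deleting that leading $0$ gives a bijection between such words (of length $n$, with $m$ ones) and arbitrary words avoiding $1^k$ (of length $n-1$, with $m$ ones) — deleting an initial $0$ cannot create a run of $1$'s, and conversely prepending a $0$ to any legal word is legal. Under this bijection the length marker $x$ picks up one extra factor, while the $1$-count marker $y$ is unchanged, so the generating function for words starting with $0$ is $x\, F_k(x,y)$. Here one should also note that the empty word, which has no first letter, is handled separately: it is the $n=0$, $m=0$ term, contributing the constant $1$ to $F_k$, and contributes nothing to $x F_k(x,y)$ or to $F_k^{\mathrm{fix}}$.

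Putting the pieces together, the words counted by $F_k$ split as
\[
F_k(x,y) \ = \ 1 \ + \ x\,F_k(x,y) \ + \ F_k^{\mathrm{fix}}(x,y),
\]
where the three summands on the right count, respectively, the empty word, the words beginning with $0$, and the words beginning with $1$. Solving for $F_k^{\mathrm{fix}}$ gives
\[
F_k^{\mathrm{fix}}(x,y) \ = \ \bigl(1 - x\bigr)\,F_k(x,y) \ - \ 1,
\]
and substituting the closed form from Proposition \ref{prop:bkv} yields the stated rational expression. The only point requiring care — and the step I would write out most carefully — is the bijective claim that stripping a leading $0$ is a length-decreasing, $1$-count-preserving bijection onto all $1^k$-avoiding words; everything else is bookkeeping and algebraic substitution.
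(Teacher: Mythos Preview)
Your proposal is correct and follows essentially the same argument as the paper: decompose $F_k$ into the empty word, words starting with $0$, and words starting with $1$; identify the middle piece as $x\,F_k(x,y)$ via the strip-a-leading-$0$ bijection; and solve for $F_k^{\mathrm{fix}}$. The paper's proof is organized identically, so there is nothing to add.
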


\begin{proof}
Every word counted by $F_k(x,y)$ either starts with $0$ or $1$, or is the empty word, which has weight $1$. Let $F_{k,0}(x,y)$ be the generating function for words starting with $0$, and $F_{k,0}(x,y)$ for words starting with $1$. Then $F_k(x,y)$ can be decomposed as
\[
F_k(x,y) 
\ = \ 1 \ + \ F_{k,0}(x,y) \ + \ F_{k,1}(x,y) .
\]

Adding a leading $0$ to any word that avoids $1^{k}$ results in a word that still avoids $1^{k}$ and contributes weight $x$ while contributing no factor of $y$. Similarly, removing the first bit from any word that starts with $0$ results in a word that avoids $1^{k}$. Therefore
\[
F_{k,0}(x,y) \ = \ x \, F_k(x,y).
\]

By definition, $F_k^{\mathrm{fix}}(x,y)$ is the generating function of words that avoid $1^{k}$ and start with $1$, so
\[
F_k^{\mathrm{start}=1}(x,y) \ = \ F_k^{\mathrm{fix}}(x,y).
\]
Hence
\[
F_k(x,y) 
\ = \ 1 \ + \ x\,F_k(x,y) \ + \ F_k^{\mathrm{fix}}(x,y),
\]
leading to
\[
F_k^{\mathrm{fix}}(x,y) 
\ = \ \bigl( 1 \ - \ x \bigr)\, F_k(x,y) \ - \ 1
\ = \ \frac{ \bigl( 1 \ - \ x \bigr)\, y \,\bigl( 1 \ - \ (x y)^{k} \bigr) }{\, y \ - \ x y^{2} \ - \ x y \ + \ (x y)^{k+1} \,} \ - \ 1 .
\]
\end{proof}

Having found the generating function, we may derive the exact mean and variance of $K_n$. Throughout this derivation, let $[x^n]G(x)$ denote the coefficient of $x^n$ in a series $G(x)$.

\begin{remark}
It is standard in analytic combinatorics (see, for example, \cite[Section III.2]{FlajoletSedgewick2009}) to identify combinatorial parameters with the random variables they induce under uniform sampling: a parameter $\chi$ counting a statistic in objects of length $n$ becomes a random variable $X_n$ when these objects are sampled uniformly. We cite relevant results involving parameters as if they were expressed in terms of random variables.
\end{remark}

The following result is a consequence of {\cite[Proposition III.2]{FlajoletSedgewick2009}}.
\begin{proposition}\label{prop:moment-identities}
Let $F(x,y) \ = \ \sum_{n\ge 0}\sum_{m\ge 0} f_{n,m}\,x^n y^m$ be a generating function where $f_{n,m}$ counts objects of size $n$ with parameter value $m$. For the random variable $Y_n$ obtained from uniform sampling of size-$n$ objects, with distribution
\[
\Prob(Y_n  =  m) \ = \ \frac{f_{n,m}}{\sum_{j\ge 0} f_{n,j}},
\]
The expectation and variance of $Y_n$ are: 
\[
\E[Y_n] 
\ = \ \frac{[x^n] \, \frac{\partial F}{\partial y}(x, y)\big|_{y=1}}{[x^n] \, F(x, 1)},
\qquad
\operatorname{Var}(Y_n)
\ = \ \frac{[x^n] \, \bigl(\frac{\partial^2 F}{\partial y^2} \ + \ \frac{\partial F}{\partial y}\bigr)(x, y)\big|_{y=1}}{[x^n] \, F(x, 1)}
\ - \
\bigl(\E[Y_n]\bigr)^2.
\]
\end{proposition}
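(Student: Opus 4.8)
The plan is to derive both formulas directly from the definition of the moments of $Y_n$ together with formal term-by-term differentiation in the variable $y$; this is precisely the specialization of \cite[Proposition III.2]{FlajoletSedgewick2009} to the first two moments. Throughout, set $p_n = \sum_{j\ge 0} f_{n,j} = [x^n]F(x,1)$ for the number of objects of size $n$, so that $\Prob(Y_n=m) = f_{n,m}/p_n$ and $\E[g(Y_n)] = p_n^{-1}\sum_{m} g(m) f_{n,m}$ for any function $g$.

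First I would handle the mean. Differentiating $F(x,y) = \sum_{n,m} f_{n,m} x^n y^m$ with respect to $y$ and setting $y=1$ gives $\frac{\partial F}{\partial y}(x,y)\big|_{y=1} = \sum_{n}\bigl(\sum_m m f_{n,m}\bigr)x^n$, so that $[x^n]\frac{\partial F}{\partial y}(x,1) = \sum_m m f_{n,m} = p_n\,\E[Y_n]$. Dividing by $p_n = [x^n]F(x,1)$ yields the stated expression for $\E[Y_n]$.

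Next I would pass through the second falling-factorial moment to reach the variance. Differentiating twice in $y$ and setting $y=1$ gives $[x^n]\frac{\partial^2 F}{\partial y^2}(x,1) = \sum_m m(m-1) f_{n,m} = p_n\,\E[Y_n(Y_n-1)]$, and adding the first-derivative term produces $[x^n]\bigl(\frac{\partial^2 F}{\partial y^2} + \frac{\partial F}{\partial y}\bigr)(x,1) = p_n\bigl(\E[Y_n(Y_n-1)] + \E[Y_n]\bigr) = p_n\,\E[Y_n^2]$. Dividing by $p_n$ and subtracting $(\E[Y_n])^2$, using $\Var(Y_n) = \E[Y_n^2] - (\E[Y_n])^2$, gives the claimed formula.

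The only point needing justification is that differentiating the bivariate series term by term and evaluating at $y=1$ is legitimate. This is immediate when $F$ is read as a formal power series: for each fixed $n$ only finitely many $f_{n,m}$ are nonzero, so the $n$th section $\sum_m f_{n,m} y^m$ is a polynomial, and applying $\partial_y$ followed by $y\mapsto 1$ acts coefficientwise in $x$ with no convergence concerns. Hence there is no genuine obstacle here; the whole argument is bookkeeping converting between falling-factorial moments and the ordinary variance, and one could equally well simply invoke \cite[Proposition III.2]{FlajoletSedgewick2009} and specialize to the first two moments.
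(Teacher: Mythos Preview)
Your argument is correct and is exactly the standard derivation underlying \cite[Proposition III.2]{FlajoletSedgewick2009}; the paper itself does not give an independent proof but simply cites that reference, so your proposal effectively spells out the computation the paper defers. One small remark: your finiteness justification (``for each fixed $n$ only finitely many $f_{n,m}$ are nonzero'') is not part of the proposition's hypotheses as stated, but it holds automatically in the combinatorial setting at hand and in the application to $W_{n,k}$, so nothing is missing.
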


From here we apply Proposition \ref{prop:moment-identities} for $Y_n = X_m$ and $F \ = \ F_k^{\mathrm{fix}}$ from Proposition \ref{prop:fix}.

\begin{theorem}[Exact mean and variance]
For every $k \ge 2$ and $n \ge 1$, let
\[
A_k(x) \ = \ F_k^{\mathrm{fix}}(x,1),\] 
\[B_k(x) \ = \ \partial_y F_k^{\mathrm{fix}}(x,y)\big|_{y=1}, and 
\]
\[C_k(x) \ = \ \bigl(\partial_y^2 F_k^{\mathrm{fix}} \ + \ \partial_y F_k^{\mathrm{fix}}\bigr)(x,y)\big|_{y=1}.\]

Then the mean and variance of $X_n$ under the uniform distribution on $W_{n,k}$ can be calculated using the expressions we have just defined as follows: 
\[
\E[X_n] \ = \ \frac{ [x^n]\,B_k(x) }{ [x^n]\,A_k(x) },
\qquad
\Var(X_n) \ = \ \frac{ [x^n]\,C_k(x) }{ [x^n]\,A_k(x) } \ - \ \left( \frac{ [x^n]\,B_k(x) }{ [x^n]\,A_k(x) } \right)^{\!2}.
\]

The explicit forms of $A_k, B_k, C_k$ are listed below:
\[
A_k(x) 
\ = \ \frac{ x \ - \ x^{k} }{ (1 \ - \ x)\,\Delta_k(x) }
\ = \ \frac{ x \ - \ x^{k} }{ 1 \ - \ 2x \ + \ x^{k+1} },
\]

\[
\begin{aligned}
B_k(x) 
\ = &\ \frac{1}{\bigl((1 \ - \ x)\,\Delta_k(x)\bigr)^{2}}
\Bigl[
\ \bigl(x \ - \ k x^{k}\bigr)\,(1 \ - \ x)\,\Delta_k(x) \\
&\ - \ \bigl(x \ - \ x^{k}\bigr)\,\Bigl( -\,x\,\Delta_k(x) \ - \ (1 \ - \ x)\,\sum_{r=1}^{k-1} r\,x^{r+1} \Bigr)
\Bigr],
\end{aligned}
\]

\[
\begin{aligned}
C_k(x) \ = & \ \frac{1}{\bigl((1 \ - \ x)\,\Delta_k(x)\bigr)^{3}}
\Bigl[ \bigl(-\,k(k-1)\,x^{k}\bigr)\,\bigl((1 \ - \ x)\,\Delta_k(x)\bigr)^{2} \\
&\ - \ \bigl(x \ - \ x^{k}\bigr)\,\Bigl( (1 \ - \ x)\,\bigl(-\,\sum_{r=2}^{k-1} r(r-1)\,x^{r+1}\bigr) \ + \ 2x\,\sum_{r=1}^{k-1} r\,x^{r+1} \Bigr)\,(1 \ - \ x)\,\Delta_k(x) \\
&\ - \ 2\,\bigl(x \ - \ k x^{k}\bigr)\,(1 \ - \ x)\,\Delta_k(x)\,\Bigl( -\,x\,\Delta_k(x) \ - \ (1 \ - \ x)\,\sum_{r=1}^{k-1} r\,x^{r+1} \Bigr) \\
&\ + \ 2\,\bigl(x \ - \ x^{k}\bigr)\,\Bigl( -\,x\,\Delta_k(x) \ - \ (1 \ - \ x)\,\sum_{r=1}^{k-1} r\,x^{r+1} \Bigr)^{2}
\Bigr] \ + \ B_k(x),
\end{aligned}
\]

where
\[
\Delta_k(x) \ = \ 1 \ - \ \sum_{j=1}^{k} x^{j}.
\]
\end{theorem}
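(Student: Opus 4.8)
The plan is to derive everything from Proposition~\ref{prop:moment-identities} applied to the generating function $F = F_k^{\mathrm{fix}}(x,y)$ of Proposition~\ref{prop:fix}. By Definition~\ref{def:Xn} together with Lemma~\ref{lem:bijection}, $X_n$ is exactly the parameter counting $1$'s under uniform sampling from $W_{n,k}$, and $F_k^{\mathrm{fix}}$ marks word length by $x$ and number of $1$'s by $y$; hence Proposition~\ref{prop:moment-identities} with $Y_n = X_n$ yields the stated mean and variance formulas verbatim once we set $A_k(x) = F_k^{\mathrm{fix}}(x,1)$, $B_k(x) = \partial_y F_k^{\mathrm{fix}}(x,y)\big|_{y=1}$, and $C_k(x) = (\partial_y^2 F_k^{\mathrm{fix}} + \partial_y F_k^{\mathrm{fix}})(x,y)\big|_{y=1}$. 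So the substantive part of the theorem is the explicit evaluation of these three rational functions, carried out as follows.

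First I would put $F_k^{\mathrm{fix}}$ in a convenient shape. Clearing the common factor of $y$ in Proposition~\ref{prop:fix} (equivalently, expanding $(1-x)F_k(x,y) - 1$ and observing the numerator collapses to $xy^2 - x^k y^{k+1} = y(xy - x^k y^k)$), one gets $F_k^{\mathrm{fix}}(x,y) = \tfrac{\,xy - x^k y^k\,}{\,1 - x - xy + x^{k+1}y^k\,}$. Setting $y=1$ gives $A_k(x) = \tfrac{x - x^k}{1 - 2x + x^{k+1}}$, and a short telescoping expansion confirms $1 - 2x + x^{k+1} = (1-x)\Delta_k(x)$, which is the stated form for $A_k$.

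Next I would compute $B_k$ and $C_k$ via the quotient rule for $P/Q$ with $P = xy - x^k y^k$ and $Q = 1 - x - xy + x^{k+1}y^k$, using
$$\partial_y\!\left(\frac{P}{Q}\right) = \frac{P_y Q - P\,Q_y}{Q^2}, \qquad \partial_y^2\!\left(\frac{P}{Q}\right) = \frac{P_{yy}Q^2 - P\,Q_{yy}\,Q - 2P_y Q_y Q + 2P\,Q_y^2}{Q^3}.$$
Evaluating the required derivatives at $y=1$ gives $P(x,1) = x - x^k$, $P_y(x,1) = x - kx^k$, $P_{yy}(x,1) = -k(k-1)x^k$, $Q(x,1) = (1-x)\Delta_k(x)$, $Q_y(x,1) = -x + kx^{k+1}$, and $Q_{yy}(x,1) = k(k-1)x^{k+1}$; substituting these into the two displays produces $B_k$ and then $C_k = \partial_y^2(P/Q)\big|_{y=1} + B_k$. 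To match the sums appearing in the theorem I would finally record the two auxiliary polynomial identities $-x + kx^{k+1} = -x\,\Delta_k(x) - (1-x)\sum_{r=1}^{k-1} r\,x^{r+1}$ and $k(k-1)x^{k+1} = (1-x)\bigl(-\sum_{r=2}^{k-1} r(r-1)x^{r+1}\bigr) + 2x\sum_{r=1}^{k-1} r\,x^{r+1}$, each of which follows by expanding the right-hand side and checking that the coefficient of $x^m$ vanishes for $3 \le m \le k$ via the one-line cancellation $(m-2)\bigl[-(m-1) + (m-3) + 2\bigr] = 0$, while the coefficient of $x^{k+1}$ comes out equal to the left-hand side.

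The main obstacle is purely bookkeeping: executing the second-derivative quotient rule and the two telescoping identities without sign errors, and tracking the $(1-x)\Delta_k(x)$ powers in the denominators. There is no conceptual difficulty once $F_k^{\mathrm{fix}}$ is in the cleared-denominator form, and I would guard against slips by cross-checking the resulting $A_k, B_k, C_k$ against a direct enumeration for a small case such as $k=2$, where $W_{n,2}$ is the family of binary words of length $n$ with first bit $1$ and no two adjacent $1$'s.
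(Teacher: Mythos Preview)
Your proposal is correct and follows essentially the same strategy as the paper: apply Proposition~\ref{prop:moment-identities} to $F_k^{\mathrm{fix}}$, write the latter as a quotient $P/Q$, and evaluate the quotient-rule derivatives at $y=1$. The one computational difference is that you first telescope the denominator to the closed form $Q = 1 - x - xy + x^{k+1}y^k$, which makes the derivative values $Q_y(x,1) = -x + kx^{k+1}$ and $Q_{yy}(x,1) = k(k-1)x^{k+1}$ immediate, and then you verify the two polynomial identities needed to match the sum expressions in the statement. The paper instead keeps the denominator in the factored form $V = (1-xy)\,\Delta_k(x,y)$ and applies the product rule, so the sums $\sum_{r} r\,x^{r+1}$ and $\sum_{r} r(r-1)\,x^{r+1}$ arise directly from $\partial_y \Delta_k$ and $\partial_y^2 \Delta_k$ without a separate identity check. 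Your route gives cleaner intermediate values but trades that for the two telescoping verifications at the end; the paper's route avoids those verifications at the cost of carrying the product-rule terms throughout. Either way the content is the same.
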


\begin{proof}
By Proposition \ref{prop:fix},
\[
F_k^{\mathrm{fix}}(x, y) \ = \ \frac{x y \, \bigl(1 \ - \ (x y)^{k-1} \bigr)}{\bigl(1 \ - \ x y\bigr) \, \Delta_k(x, y)},
\qquad
\Delta_k(x, y) \ = \ 1 \ - \ x \ - \ x^2 y \ - \ \cdots \ - \ x^k y^{k-1}.
\]
To make this simpler, set
\[
N(x, y) \ = \ x y \ - \ x^k y^k, 
\qquad
V(x, y) \ = \ \bigl(1 \ - \ x y\bigr) \, \Delta_k(x, y).
\]
From here we can write $F_k$ as $F_k^{\mathrm{fix}} \ = \ N/V$. 

Now we evaluate the derivatives at $y \ = \ 1$:
\[
N(x, 1) \ = \ x \ - \ x^k, 
\qquad
\frac{\partial N}{\partial y}(x, y) \ = \ x \ - \ k x^k y^{k-1} \ \Rightarrow \ \frac{\partial N}{\partial y}(x, 1) \ = \ x \ - \ k x^k,
\]
\[
\frac{\partial^2 N}{\partial y^2}(x, y) \ = \ -k(k - 1) x^k y^{k-2} \ \Rightarrow \ \frac{\partial^2 N}{\partial y^2}(x, 1) \ = \ -k(k - 1) x^k.
\]
For the denominator,
\[
V(x, 1) \ = \ (1 \ - \ x) \, \Delta_k(x),
\]
\[
\frac{\partial V}{\partial y}(x, y) \ = \ -x \, \Delta_k(x, y) \ + \ (1 \ - \ x y) \, \frac{\partial \Delta_k}{\partial y}(x, y),
\]
so that
\[
\frac{\partial \Delta_k}{\partial y}(x, y) \ = \ -x^2 \ - \ 2 x^3 y \ - \ \cdots \ - \ (k - 1) x^k y^{k-2},
\]
\[
\frac{\partial V}{\partial y}(x, 1) \ = \ -x \, \Delta_k(x) \ - \ (1 \ - \ x) \sum_{r = 1}^{k - 1} r \, x^{r + 1}.
\]
Differentiating again,
\[
\frac{\partial^2 V}{\partial y^2}(x, y) \ = \ (1 \ - \ x y) \, \frac{\partial^2 \Delta_k}{\partial y^2}(x, y) \ - \ 2 x \, \frac{\partial \Delta_k}{\partial y}(x, y),
\]
with
\[
\frac{\partial^2 \Delta_k}{\partial y^2}(x, y) \ = \ -2 x^3 \ - \ 3 \cdot 2 \, x^4 y \ - \ \cdots \ - \ (k - 1)(k - 2) x^k y^{k-3},
\]
so that
\[
\frac{\partial^2 V}{\partial y^2}(x, 1) 
\ = \ (1 \ - \ x) \Bigl(-\sum_{r = 2}^{k - 1} r(r - 1) x^{r + 1}\Bigr) \ + \ 2 x \sum_{r = 1}^{k - 1} r \, x^{r + 1}.
\]

Now we compute $A_k$, $B_k$, $C_k$. By definition,
\[
A_k(x) \ = \ \frac{N(x, 1)}{V(x, 1)} \ = \ \frac{x \ - \ x^k}{(1 \ - \ x) \, \Delta_k(x)},
\]
which also equals $(x \ - \ x^k)/(1 \ - \ 2x \ + \ x^{k+1})$ since $(1 \ - \ x) \, \Delta_k(x) \ = \ 1 \ - \ 2x \ + \ x^{k+1}$.

For the first derivative, the quotient rule gives
\[
\frac{\partial}{\partial y}\!\left(\frac{N}{V}\right) \ = \ \frac{\frac{\partial N}{\partial y} \cdot V \ - \ N \cdot \frac{\partial V}{\partial y}}{V^2}.
\]
Evaluating this at $y \ = \ 1$ and substituting in the four expressions $N(x, 1)$, $\frac{\partial N}{\partial y}(x, 1)$, $V(x, 1)$, $\frac{\partial V}{\partial y}(x, 1)$ results in 
\[
\begin{aligned}
B_k(x) \ = &\ \frac{1}{\bigl((1 \ - \ x) \, \Delta_k(x)\bigr)^2}
\Bigl[
\ \bigl(x \ - \ k x^k\bigr) \, (1 \ - \ x) \, \Delta_k(x) \\
&\ - \ \bigl(x \ - \ x^k\bigr) \, \Bigl(-x \, \Delta_k(x) \ - \ (1 \ - \ x) \sum_{r = 1}^{k - 1} r \, x^{r + 1} \Bigr)
\Bigr].
\end{aligned}
\]

For the second derivative, we differentiate again:
\[
\frac{\partial^2}{\partial y^2}\!\left(\frac{N}{V}\right)
\ = \ \frac{\frac{\partial^2 N}{\partial y^2} \cdot V^2 \ - \ N \cdot \frac{\partial^2 V}{\partial y^2} \cdot V \ - \ 2 \frac{\partial N}{\partial y} \cdot V \cdot \frac{\partial V}{\partial y} \ + \ 2 N \cdot \left(\frac{\partial V}{\partial y}\right)^2}{V^3}.
\]
Evaluating at $y \ = \ 1$ and substituting $\frac{\partial^2 N}{\partial y^2}(x, 1)$, $\frac{\partial^2 V}{\partial y^2}(x, 1)$ together with the derived expression gives:
\[
\begin{aligned}
\frac{\partial^2 F_k^{\mathrm{fix}}}{\partial y^2}(x, 1) \ = & \  \frac{1}{\bigl((1 \ - \ x) \, \Delta_k(x)\bigr)^3}
\Bigl[ \bigl(-k(k - 1) x^k\bigr) \, \bigl((1 \ - \ x) \, \Delta_k(x)\bigr)^2 \\
&\ - \ \bigl(x \ - \ x^k\bigr) \, \Bigl( (1 \ - \ x) \, \bigl(-\sum_{r = 2}^{k - 1} r(r - 1) x^{r + 1}\bigr) \ + \ 2 x \sum_{r = 1}^{k - 1} r \, x^{r + 1} \Bigr) \, (1 \ - \ x) \, \Delta_k(x) \\
&\ - \ 2 \bigl(x \ - \ k x^k\bigr) \, (1 \ - \ x) \, \Delta_k(x) \, \Bigl( -x \, \Delta_k(x) \ - \ (1 \ - \ x) \sum_{r = 1}^{k - 1} r \, x^{r + 1} \Bigr) \\
&\ + \ 2 \bigl(x \ - \ x^k\bigr) \, \Bigl( -x \, \Delta_k(x) \ - \ (1 \ - \ x) \sum_{r = 1}^{k - 1} r \, x^{r + 1} \Bigr)^2
\Bigr].
\end{aligned}
\]
By definition,
\[
C_k(x) \ = \ \frac{\partial^2 F_k^{\mathrm{fix}}}{\partial y^2}(x, 1) \ + \ B_k(x).
\]

Applying Proposition \ref{prop:moment-identities} to $F \ = \ F_k^{\mathrm{fix}}$ gives:
\[
\E[X_n] \ = \ \frac{[x^n] \, B_k(x)}{[x^n] \, A_k(x)},
\qquad
\operatorname{Var}(X_n) \ = \ \frac{[x^n] \, C_k(x)}{[x^n] \, A_k(x)} \ - \ \left(\frac{[x^n] \, B_k(x)}{[x^n] \, A_k(x)}\right)^{\!2},
\]
as was to be shown.
\end{proof}

\subsection{Distribution of Gaps Between Summands}
Again, Lemma \ref{Lemma: simple_bijection} allows us to reduce our analysis to the one-dimensional case. There are results on limiting gap probabilities, longest gaps, and much more (see \cite{beckwith2013average, bower2015gaps}).

\begin{proof}[The Proof of Theorem \ref{thm:avegapsbulk}] follows from 
\cite[Theorem 1.5]{beckwith2013average} and \cite[Theorem 1.5]{bower2015gaps}.
\end{proof}

\subsection{Summand Minimality}

We utilize the following definitions and result from \cite{cordwell2018summand}.

\begin{definition}
    For a PLRS given by the recurrence relation $H_{n+1} \ = \ c_1 H_n + \cdots + c_L H_{n+1-L},$ we call $\sigma=(c_1,c_2,..,c_L)$ the \textbf{\textit{signature}} of the PLRS.
\end{definition}
As we observed in Remark \ref{rmk:kbonacciPLRS}, $\{x_{n+1} \}_{n=1} ^\infty$ is a PLRS with $L=k$ and $c_1 = \dots = c_L = 1$. Kologlu et al. \cite[Theorem 1.3]{kologlu2011summands} proved that every positive integer $n$ has a unique legal decomposition associated to a given PLRS, called the \textbf{\textit{generalized Zeckendorf decomposition}} of $n$. In the case of the $k$-bonacci sequence, this is the unique decomposition given by Theorem \ref{thm:kbonacci-zeckendorf}. We will say that a PLRS $\{H_n\}_{n=1}^\infty$ is \textbf{\textit{summand minimal}} if no representation of any positive integer $n$ as a linear combination with nonnegative coefficients of terms in $\{H_n\}_{n=1}^\infty$ uses fewer summands than the generalized Zeckendorf decomposition of $n$.

\begin{theorem}\cite[Theorem 1.1]{cordwell2018summand}\label{thm:plrs-summand-minimality}
    A PLRS with signature $\sigma=(c_1,c_2, \dots , c_L)$ is summand minimal if and only if $c_1 \geq c_2 \geq \cdots \geq c_L$.
\end{theorem}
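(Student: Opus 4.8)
The plan is to prove the two implications separately. Throughout, I use that any two nonnegative-integer representations $\sum_i a_i H_i$ and $\sum_i a_i' H_i$ of the same integer differ by an integer combination of the defining relations of the PLRS --- the recurrences $H_j = c_1 H_{j-1} + \cdots + c_L H_{j-L}$ for $j > L$ and the initial relations $H_{j+1} = c_1 H_j + \cdots + c_{j-1} H_2 + (c_j + 1) H_1$ for $1 \le j < L$. Reading such a relation from its multi-summand side to its single-summand side is a \textbf{combine} move: it preserves the represented integer and strictly decreases the number of summands, while the reverse \textbf{expand} move strictly increases it. Two preliminary facts I would record first: $Z(n)$ is the unique \emph{legal} representation, and when the signature is weakly decreasing $Z(n)$ is produced by the greedy algorithm, so that $Z(n) = \{H_m\} \cup Z(n - H_m)$, where $H_m$ is the largest term with $H_m \le n$.

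\emph{Necessity (contrapositive).} Suppose the signature is not weakly decreasing, so $c_j < c_{j+1}$ for some $j$. I would construct an explicit $n$ whose legal decomposition is not summand minimal: the point is that legality forces the leading block of digits to stay below the signature, so a representation that concentrates weight in a single moderately large term is illegal, and because $c_{j+1}$ strictly exceeds $c_j$ such a compact representation cannot be reached from the legal one by combines --- only by expanding into strictly more summands. For instance, when $L = 2$ and $c_2 \ge c_1 + 2$, the integer $n = H_2 + (c_2-1)H_1$ has legal decomposition with digit string $(c_1, c_2 - 1)$ and hence $c_1 + c_2 - 1$ summands, yet it also equals $2H_2 + (c_2 - c_1 - 2)H_1$, a representation with only $c_2 - c_1 < c_1 + c_2 - 1$ summands (using $c_1 \ge 1$), and one checks $2H_2 \ne H_3$ in this range so that no combine applies to it; placing the same local pattern at indices $j, j+1$ handles a general ascent in a longer signature.

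\emph{Sufficiency.} Assume $c_1 \ge c_2 \ge \cdots \ge c_L$. I would show by strong induction on $n$ that $|Z(n)| \le |R|$ for every representation $R$ of $n$. Let $H_m$ be the largest term with $H_m \le n$. If $R$ contains a copy of $H_m$, then deleting that copy gives a representation of $n - H_m$ with $|R| - 1$ summands, and since $Z$ is greedy we get $|Z(n)| = 1 + |Z(n - H_m)| \le 1 + (|R| - 1)$ by the inductive hypothesis. If instead $R$ uses only terms strictly below $H_m$, the key claim is that a summand-minimal such representation must contain at least $c_i$ copies of $H_{m-i}$ for each $1 \le i \le L$; granting this, a single combine turns $R$ into a representation of $n$ that uses $H_m$ and has no more summands, reducing to the previous case.

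The main obstacle is this key claim: that any summand-minimal representation of $n \ge H_m$ built from $\{H_1, \dots, H_{m-1}\}$ dominates the signature coefficient-wise in its top $L$ coordinates. This is exactly where weak monotonicity is essential --- the claim fails at an ascent $c_j < c_{j+1}$, which is what makes the equivalence tight --- and I would isolate it as a lemma about summand-minimal representations confined to a bounded window of terms, proved by an induction that shrinks the window from the top, with the inductive step justified by an exchange argument (trade a copy of a smaller term for a larger one without increasing the summand count) that relies on $c_1 \ge c_2 \ge \cdots \ge c_L$.
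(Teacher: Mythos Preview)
The paper does not prove this theorem at all: it is quoted verbatim as \cite[Theorem 1.1]{cordwell2018summand} and used as a black box to deduce Corollary~\ref{lem:kbonacci-number-minimality}. So there is no in-paper proof to compare against; your proposal is an attempt to reprove a cited result.

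As a sketch toward Cordwell et al.'s theorem, your proposal has the right architecture (two implications, with necessity by explicit counterexample and sufficiency by induction plus a combine move), but both halves have real gaps.

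\emph{Necessity.} Your $L=2$ example is internally inconsistent: you write $n = H_2 + (c_2-1)H_1$ but then claim its legal digit string is $(c_1, c_2-1)$ with $c_1 + c_2 - 1$ summands, which would make $n = c_1 H_2 + (c_2-1)H_1$ instead. With $H_1=1$ and $H_2=c_1+1$, the two expressions $H_2 + (c_2-1)H_1 = c_1+c_2$ and $c_1 H_2 + (c_2-1)H_1 = c_1^2+c_1+c_2-1$ disagree, and neither matches your alternative representation in the intended way. Separately, even granting a corrected example, your construction needs $c_2 \ge c_1+2$ for the coefficient $c_2-c_1-2$ to be nonnegative, so the case $c_{j+1}=c_j+1$ is not covered; you would need a different witness there. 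The throwaway remark that ``placing the same local pattern at indices $j,j+1$ handles a general ascent'' also hides work: legality is a global condition on the leading block, and you have to check that the surrounding digits can be chosen so that the short representation genuinely cannot be combined further.

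\emph{Sufficiency.} The induction reduces everything to your ``key claim'': a summand-minimal representation of $n \ge H_m$ using only $H_1,\dots,H_{m-1}$ must have at least $c_i$ copies of $H_{m-i}$ for each $i \le L$. You acknowledge this is the crux and propose an exchange argument, but no actual argument is given. This lemma is exactly where the difficulty of the theorem lives; in particular, the naive exchange ``trade a copy of a smaller term for a larger one'' can \emph{increase} the summand count when the smaller term appears with low multiplicity, and it is not clear how weak monotonicity of $\sigma$ alone rules this out at every stage of your window-shrinking induction. Without a concrete mechanism here, the sufficiency proof is only an outline. You also assert without proof that the greedy algorithm yields the legal decomposition whenever $\sigma$ is weakly decreasing; this is true but nontrivial for general PLRS and should be justified or cited.
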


Theorem \ref{thm:plrs-summand-minimality} gives the following as an immediate corollary.

\begin{corollary}\label{lem:kbonacci-number-minimality}
    The $k$-bonacci number decomposition of $n$ given by Theorem \ref{thm:kbonacci-zeckendorf} is summand minimal.
\end{corollary}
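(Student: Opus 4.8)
The plan is to derive this as an immediate consequence of Theorem \ref{thm:plrs-summand-minimality} together with the structural facts already recorded. First I would recall from Remark \ref{rmk:kbonacciPLRS} that the shifted sequence $\{x_{n+1}\}_{n=1}^\infty$ is a PLRS with $L = k$ and $c_1 = c_2 = \cdots = c_L = 1$; in other words, its signature is $\sigma = (1,1,\dots,1)$, a vector of $k$ ones. Next, I would observe that this signature trivially satisfies the monotonicity hypothesis of Theorem \ref{thm:plrs-summand-minimality}, since $1 \geq 1 \geq \cdots \geq 1$. Applying that theorem then yields that the PLRS $\{x_{n+1}\}_{n=1}^\infty$ is summand minimal: no representation of a positive integer as a nonnegative-integer combination of the $x_i$ (with $i \geq 2$) uses fewer summands than its generalized Zeckendorf decomposition.

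It then remains only to identify the generalized Zeckendorf decomposition of $n$ for this particular PLRS with the $k$-Zeckendorf representation of $n$ produced by Theorem \ref{thm:kbonacci-zeckendorf}. This identification is exactly the remark made in the paragraph preceding the corollary (citing \cite[Theorem 1.3]{kologlu2011summands} for existence and uniqueness of legal decompositions, and noting that the legality condition for signature $(1,\dots,1)$ is precisely ``no $k$ consecutive summands,'' which is the defining condition in Theorem \ref{thm:kbonacci-zeckendorf}). One should take a moment to match the indexing conventions: the PLRS $\{x_{n+1}\}_{n=1}^\infty$ runs over $x_2, x_3, \dots$, and the restriction $i \geq 2$ in Theorem \ref{thm:kbonacci-zeckendorf} is exactly what makes these two enumerations agree (this is also why $x_1 = x_2 = 1$ is not double-counted).

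I do not expect any genuine obstacle here — the statement is a direct corollary, so the ``hard part'' is purely bookkeeping: verifying that the signature really is $(1,\dots,1)$ (as opposed to some off-by-one in the recurrence length) and that the notion of ``legal decomposition'' specializes correctly. Both checks are routine and are already implicit in the surrounding discussion, so the written proof can be a single short sentence invoking Remark \ref{rmk:kbonacciPLRS} and Theorem \ref{thm:plrs-summand-minimality}.
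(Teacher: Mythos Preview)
Your proposal is correct and takes essentially the same approach as the paper, which presents the result as an immediate corollary of Theorem \ref{thm:plrs-summand-minimality} without further proof. Your write-up simply makes explicit the bookkeeping (signature $(1,\dots,1)$, identification of legal decompositions with $k$-Zeckendorf representations) that the paper leaves implicit.
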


We are now able to prove the following.

\begin{proof}[Proof of Theorem \ref{thm: vec_sum_minimality}]
    Consider any other representation $\ve{v}=\sum_{i \geq 1}c'_i\ve{X}_{-i}$ of $\ve{v}$ as a linear combination of $k$-bonacci vectors with nonnegative integer coefficients. Suppose the SR of $\ve{v}$ contains $d$ summands and assume, for the sake of contradiction, that this representation $\ve{v}=\sum_{i \geq 1}c'_i\ve{X}_{-i}$ contains $\sum_{i \geq 1}c_i'=c<d$ summands. Let $m$ be the maximal integer such that $\ve{X}_{-m}$ is present in one of the representations.
    By Lemma \ref{lem:projection}, we have $$S_{m+1}(\sum_{i=1}^pc_i\ve{X}_{-i})~=~\sum_{i=1}^pc_ix_{m+1-i} \pmod{x_{m+1}}$$ and $$S_{m+1}(\sum_{i=1}^pc'_i\ve{X}_{-i})~=~\sum_{i=1}^pc'_ix_{m+1-i} \pmod{x_{m+1}}.$$ Observe that $\sum_{i=1}^pc_ix_{m+1-i} \pmod{x_{m+1}}$ and $\sum_{i=1}^pc'_ix_{m+1-i} \pmod{x_{m+1}}$ contain, respectively, $d$ and $c$ summands. Now, by definition of the SR, $\sum_{i=1}^pc_ix_{m+1-i} \pmod{x_{m+1}}$ has no $k$ consecutive terms and $c_i \in \{0,1\}$ for every $i$, so $\sum_{i=1}^pc_ix_{m+1-i} \pmod{x_{m+1}}$ must be the $k$-bonacci number decomposition of $S_{m+1}(\ve{v})$. Then Lemma \ref{lem:kbonacci-number-minimality} implies $d \leq c$.
\end{proof}

\section{Future Work}

A natural direction for further research concerns the behavior of the vector representation algorithm for general $k$. We proved that for $k=3$, the number of steps required is logarithmic in the size of the vector, but for larger $k$, this remains conjectural (see Conjecture~\ref{conj:large-steps-j-bound}). Formally, it is conjectured that for all $k \ge 3$, there exist constants $c_k, d_k>0$ such that the algorithm completes in at most $c_k \log |\mathbf{v}| + d_k$ steps. Proving this conjecture for higher $k$ and understanding the underlying combinatorial mechanisms presents an important avenue for future research.

During our work, we also observed that the sets ${D}_n$ increasingly resemble Rauzy fractals as $n$ grows, displaying intricate self-similar structures. Despite this clear numerical and visual evidence, we were not able to provide a rigorous proof of their convergence. Establishing such a result, as well as understanding the precise geometric and combinatorial mechanisms behind this behavior, remains an interesting and challenging direction for future research.

\appendix

\section{Algorithm Complexity}\label{appendix:complexity}

In order to compare to the efficiency of our algorithms, we first provide the following complexity analysis of the recursive algorithm given in \cite{anderson2011multidimensional}.

\begin{proof}[Proof of Lemma \ref{lem:old-algo-complexity}]
The algorithm described in \cite{anderson2011multidimensional} follows a recursive approach to compute the Satisfying Representation (SR) of a vector $\ve{v}$. We distinguish two cases based on the signs of the coordinates of $\ve{v}$. \\

\textit{Recursive Algorithm:}
\begin{itemize}
    \item \textbf{Case 1:} If $\ve{v}$ has any positive coordinate, say $v_i > 0$, define $\ve{w} = \ve{v} - \ve{e}_i$. Then by definition, 
    \[
        SR(\ve{v}) ~=~ SR(\ve{w}) + \ve{e}_i.
    \]
    Since $\ve{e}_i$ is a standard basis vector, this expression is either:
    \begin{itemize}
        \item a Nearly Satisfying Representation (NSR),
        \item a valid SR, or
        \item an “almost SR,” where the only violation is a single block of 1’s of length at most $2k - 1$.
    \end{itemize}
    \smallskip
    \item \textbf{Case 2:} If all coordinates of $\ve{v}$ are non-positive, define
    \[
        \ve{w} ~=~ \ve{v} + \ve{X}_{-k} ~=~ \ve{v} - (\ve{e}_1 + \ve{e}_2 + \dots + \ve{e}_{k-1}).
    \]
    Then we have:
    \[
        SR(\ve{v}) ~=~ SR(\ve{w}) + \ve{X}_{-k}.
    \]
    The vector $\ve{w}$ has strictly larger entries than $\ve{v}$, so we recurse on $\ve{w}$. The resulting expression $SR(\ve{w}) + \ve{X}_{-k}$ is either a valid SR, or one of the following:
    \begin{itemize}
        \item an NSR, or
        \item an “almost SR” with a single block of 1’s of length at most $2k - 1$.
    \end{itemize}
    In either of these two cases, we proceed with a normalization step using the “borrow-carry” operation as described in Lemma 4.
\end{itemize}
\smallskip
When $\ve{v}$ has positive coordinates, each operation $\ve{v} \to \ve{v} - \ve{e}_i$ reduces the $\ell_1$ norm of $\ve{v}$ by 1. Therefore, in the worst case, we perform this operation at most $O(\|\ve{v}\|_1)$ times. Appending $\ve{e}_i$ to the representation costs $O(1)$ per step. When we apply Lemma 4 to reduce blocks of 1’s (of length at most $2k - 1$), we know from the Lemma that a constant number (at most two) of “borrow-carry” operations suffices to restore the SR condition. Each such operation affects a block of length $O(k)$, so each normalization step takes $O(k)$ time. If the resulting expression is an NSR of length $L$ (i.e., involving $L$ summands), and we need to normalize it into an SR, then the total cost of normalization is $O(L \cdot k)$.

Assuming we begin with a vector $\ve{v} \in \mathbb{Z}^{k-1}$:
\begin{itemize}
    \item We perform at most $O(\|\ve{v}\|_1)$ recursive steps.
    \item Converting each NSR to an SR costs $O(L \cdot k)$ time.
\end{itemize}

Therefore, the total worst-case time complexity is:
\[
    O(k \cdot L_{v_1} \cdot \|\ve{v}\|_1).
\]
\end{proof}

For comparison, we calculate the complexity of the small steps algorithm and the large steps algorithm.

\begin{proof}[Proof of Proposition \ref{prop:complexity-small-steps}]
    We list the steps of the algorithm.
    \begin{enumerate}[(i)]
        \item Calculate $j=j_{ssb}$.
        \item Calculate the first $j$ $k$-bonacci numbers.
        \item Apply the map $S_{j+1}$.
        \item Apply the greedy algorithm to $S_{j+1}(\ve{v})$.
    \end{enumerate}
    Now, we calculate the complexity of each step.
    \begin{enumerate}[(i)]
        \item Finding $j=j_{ssb}$ takes $O(k)$ operations.
        \item Calculating one $k$-bonacci number takes $O(k)$ operations, so calculating $j$ $k$-bonacci vectors takes $O(kj)$ operations.
        \item Evaluating $S_{j+1}$ requires a dot product modulo $x_{j+1}$. The dot product takes $O(k)$ operations, and since $x_{j+1}$ is approximately $\norm{\ve{v}}_\infty$ times smaller than the dot product, using repeated subtractions gives us that the modulo calculation takes $O(\norm{\ve{v}}_\infty)$ operations.
        \item Applying the greedy algorithm requires $O(j)$ checks and $O(j)$ computations, so it takes $O(j)$ operations.
    \end{enumerate}
    Since $j=O(k^2 \norm{\ve{v}}_\infty)$, the whole algorithm has time complexity $O(k^3 \norm{\ve{v}}_\infty)$.
\end{proof}

\begin{proof}[Proof of Proposition \ref{prop:complexity-large-steps}]
    As before, we list the steps of the algorithm.
    \begin{enumerate}[(i)]
        \item Calculate all of the $k$-bonacci vectors $\ve{X}_{-n}$ with $\norm{\ve{X}_{-n}}_2 \leq 2\norm{\ve{v}}_2$. This takes $O(\log \norm{\ve{v}}_\infty)$ steps.
        \item Inductively:
        \begin{enumerate}[(a)]
            \item Find the closest $k$-bonacci vector to $\ve{v}_n$. This takes $O(\log \norm{\ve{v}}_2)$ steps.
            \item Set $\ve{v}_{n+1}$. This takes $O(1)$ steps.
            \item Check if $\norm{\ve{v}_{n+1}}_2 < \norm{\ve{v}_n}_2$. This also takes $O(1)$ steps.
        \end{enumerate}
        \item Apply the greedy algorithm. We know that this takes $O(j+\norm{\ve{v}}_\infty)$ operations.
    \end{enumerate}
    
    Since $j=O(\log \norm{\ve{v}}_\infty)$, the greedy algorithm takes $O(\norm{\ve{v}}_\infty)$ operations. The large step algorithm for finding $j_{lsb}$ is $O((\log \norm{\ve{v}}_\infty)^2)$, so the whole algorithm for each $k$ is linear in $\norm{\ve{v}}_\infty$.
\end{proof}

\subsection*{Disclosure statement}

No conflict of interest has been reported by the author(s).


\medskip
\noindent MSC2020: 11B39


\end{document}